\tikzset{snake it/.style={decorate, decoration=snake}}
\colorlet{RED}{red}
\DeclareRobustCommand*\cal{\@fontswitch\relax\mathcal}
\def\cqedsymbol{\ifmmode$\lrcorner$\else{\unskip\nobreak\hfil
\penalty50\hskip1em\null\nobreak\hfil$\lrcorner$
\parfillskip=0pt\finalhyphendemerits=0\endgraf}\fi}
\newcommand{\C}{\mathcal{C}}
\newcommand{\sep}{\text{sep}}
\newcommand{\tw}{\text{tw}}
\newcommand{\rw}{\text{rw}}
\newtheorem{lemma}{Lemma}[section]
\newtheorem{corollary}[lemma]{Corollary}
\newtheorem{theorem}[lemma]{Theorem}
\newtheorem{conjecture}[lemma]{Conjecture}
\theoremstyle{definition}
\def\dd{\hbox{-}}
\newcommand{\mylongtitle}[1]{%
  \ifodd\value{page}%
    \protect\parbox{0.97\linewidth}{#1}\hfill%
  \else%
    \hfill\protect\parbox{0.97\linewidth}{#1}%
  \fi%
}
\title[Induced subgraphs and tree decompositions I.]{ {Induced subgraphs and tree decompositions
I.\\ Even-hole-free graphs of bounded degree}}
\author{Tara Abrishami$^{\ast \mathsection}$}
\author{Maria Chudnovsky$^{\ast \dagger}$}
\author{Kristina Vu\v{s}kovi\'c $^{\ast\ast \ddagger}$}
\address{$^{\ast}$Princeton University, Princeton, NJ, USA}
\address{$^{\ast\ast \ddagger}$School of Computing, University of Leeds, UK}
\address{$^{\mathsection}$Supported by NSF Grant DMS-1763817 and
     NSF-EPSRC Grant DMS-2120644.}
\address{$^{\dagger}$ Supported by NSF Grant DMS-1763817. This material is based upon work supported by, or in part by, the U.S. Army Research Laboratory and the U. S. Army Research Office under grant number W911NF-16-1-0404.}
\address{$^{\ddagger}$ Partially supported by EPSRC grant EP/N0196660/1.}
\date{\today}
\begin{document}
\maketitle

\begin{abstract}
Treewidth is a parameter that emerged from  the study of minor closed classes of graphs (i.e. classes closed under vertex and edge deletion, and edge contraction). 
It in some sense describes the global structure of a graph.
Roughly, a graph has treewidth $k$ if it can be decomposed by a sequence of noncrossing cutsets of size at most $k$ into pieces of size at most $k+1$.
The study of hereditary graph classes (i.e. those closed under vertex deletion only) reveals a different picture, where cutsets
that are not necessarily bounded in size (such as star cutsets, 2-joins and their generalization) are required to decompose the graph into simpler pieces that
are structured but not necessarily bounded in size.
A number of such decomposition theorems are known for complex hereditary graph classes, including  even-hole-free graphs, perfect graphs and others.
These theorems  do not describe the global structure in the sense that a tree decomposition does, since the cutsets guaranteed by them  are far from being noncrossing. They are also of limited use in algorithmic applications.

%The class of even-hole-free graphs (i.e. graphs that do not contain a chordless cycle on an even number of vertices) has been studied since the 1990's,
%initially motivated by their structural similarity to perfect graphs. It is known, for example, that they can be decomposed by star cutsets and 2-joins
%into graphs that are line graphs of a tree plus at most two vertices, which has led to, among other things, their polynomial time recognition.
%Nevertheless, the complexity of a number of classical computational
%problems remains open for even-hole-free graphs, such as the coloring and stable set
%problems.

We show that in the case of even-hole-free graphs of bounded degree the  cutsets described in the previous paragraph can be partitioned into a bounded number of well-behaved collections. This allows us to prove that 
even-hole-free graphs with bounded degree have bounded treewidth, resolving a conjecture of Aboulker, Adler, Kim, Sintiari and Trotignon
[arXiv:2008.05504].  As a consequence, it follows that 
many algorithmic problems can be solved in polynomial time for this class, and that even-hole-freeness is testable in the bounded degree graph model of property testing.
In fact we prove our results for a larger class of graphs, namely the class of $C_4$-free odd-signable graphs with bounded degree.
\end{abstract}

\section{Introduction}

All graphs in this paper are finite and simple. 
A \emph{hole} of a graph $G$ is an induced cycle of $G$ of length at least four. 
 A graph is {\em even-hole-free} if it has no hole with an even number of vertices. 
 
Even-hole-free graphs have been studied extensively; see \cite{Vuskovic2010} for a survey. 
The first polynomial time recognition algorithm for this class of graphs was obtained in \cite{Conforti2002Even-hole-freeRecognition}. This  algorithm is 
 based on a  decomposition theorem  from
\cite{Conforti2002Even-hole-freeTheorem} that uses 2-joins and star, double star, and triple star cutsets to decompose the graph into simpler pieces.
Later, a stronger decomposition theorem, using only star cutsets and 2-joins,
was obtained in  \cite{daSilva2013Decomposition2-joins}, leading to 
a faster recognition algorithm. Further improvements resulted in
the best currently known algorithm with running  time
${\cal O}(n^9)$ \cite{Chang2015EHFrecognition,Thorup2020}.
This progress required deep insights into the behavior  of even-hole-free
graphs; however the  global structure of graphs in this class is still not well
understood. Moreover, there are several natural optimization problems
whose
complexity for this class remains unknown (among those 
are the vertex coloring problem and the maximum weight stable set problem).
The key difficulty is to make use of star cutsets, and in particular to
understand how several star cutsets in a given graph interact. In this paper
we address this problem, by showing that star cutsets in an even-hole-free graph
of bounded degree can be partitioned into a bounded number of well-behaved
collections, which in turn allows us to bound the treewidth of such graphs.

Let $G = (V, E)$ be a graph. A \emph{tree decomposition} $(T, \chi)$ of $G$ is a tree $T$ and a map $\chi: V(T) \to 2^{V(G)}$ such that the following hold: 
\begin{enumerate}[(i)]
\itemsep -.2em
    \item For every $v \in V(G)$, there exists $t \in V(T)$ such that $v \in \chi(t)$. 
    
    \item For every $v_1v_2 \in E(G)$, there exists $t \in V(T)$ such that $v_1, v_2 \in \chi(t)$.
    
    \item For every $v \in V(G)$, the subgraph of $T$ induced by $\{t \in V(T) \mid v \in \chi(t)\}$ is connected.
\end{enumerate}

If $(T, \chi)$ is a tree decomposition of $G$ and $V(T) = \{t_1, \hdots, t_n\}$, the sets $\chi(t_1), \hdots, \chi(t_n)$ are called the \emph{bags of $(T, \chi)$}.  The \emph{width} of a tree decomposition $(T, \chi)$ is $\max_{t \in V(T)} |\chi(t)|-1$. The \emph{treewidth} of $G$, denoted $\tw(G)$, is the minimum width of a tree decomposition of $G$.

Many NP-hard algorithmic problems can be solved in polynomial time in graphs with bounded treewidth. For a full discussion, see \cite{Bodlaender1988DynamicTreewidth}. While tree decomposition{ s}, and classes of graphs of bounded treewidth, play an important role in the study of graphs with forbidden minors, the problem of connecting tree decompositions with forbidden induced subgraphs has so far remained open. Clearly, in order to get a class of bounded treewidth, one needs to forbid, for example, large cliques, large complete bipartite graphs, large walls, and the line graphs of large walls.  However,  all of these obstructions{ , except for large cliques,} contain even holes. Further, in \cite{Silva2010}, a bound on the treewidth of planar even-hole-free graphs was proven. On the other hand,  \cite{SD} contains a construction of a family of even-hole-free graphs with no $K_4$, and with unbounded treewidth.  The graphs in this construction have both unbounded degree and contain large clique minors. In \cite{Aboulker2020OnGraphs} it was examined  whether both of these are necessary. They show that any graph that excludes a fixed graph as a minor either has small treewidth or contains (as an induced subgraph) a large wall or the line graph of a large wall. This implies that even-hole-free graphs that exclude a fixed graph as a minor have bounded treewidth (generalizing the result of \cite{Silva2010}).  Furthermore,  the following conjecture was made (and proved for subcubic graphs) in \cite{Aboulker2020OnGraphs}:

\begin{conjecture} \label{mainconj}
  For every $\delta \geq 0$ there exists $k$ such that  even-hole-free graphs
  with maximum degree at most $\delta$ have treewidth at most $k$. 
\end{conjecture}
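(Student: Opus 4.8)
To prove Conjecture~\ref{mainconj}, the plan is to establish the stronger statement announced in the abstract: for every $\delta \ge 0$ there is a $k$ such that every $C_4$-free odd-signable graph of maximum degree at most $\delta$ has treewidth at most $k$. This suffices, since $C_4$ is an even hole — so every even-hole-free graph is $C_4$-free — and every even-hole-free graph is odd-signable by a classical result. The overall strategy is to reduce bounding treewidth to finding small balanced separators, and then to build such separators from the cutsets supplied by a decomposition theorem for $C_4$-free odd-signable graphs, using the bounded degree hypothesis to keep those cutsets small.

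The first ingredient is the standard fact that there is an absolute constant $c$ with the following property: if every $S \subseteq V(G)$ admits a set $X$ with $|X| \le s$ such that each component of $G \setminus X$ contains at most $|S|/2$ vertices of $S$, then $\tw(G) \le cs$. Since the class of $C_4$-free odd-signable graphs of maximum degree at most $\delta$ is hereditary, it is enough to produce, for every graph $G$ in this class and every $S \subseteq V(G)$, such an ``$S$-balanced separator'' of size bounded by a function of $\delta$.

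The second ingredient is a decomposition theorem asserting that every $C_4$-free odd-signable graph is either \emph{basic} — a graph of restricted structure (for instance a clique, a hole, or a graph of similarly restricted shape) whose treewidth is bounded in terms of $\delta$ — or admits a star cutset or a 2-join (clique cutsets, a convenient special case of star cutsets, can be handled first). The point of the bounded degree hypothesis is that all of these cutsets are small: a star cutset has at most $\delta+1$ vertices, and the two ``ends'' of a 2-join each have at most $\delta$ vertices, so a 2-join yields a separator of size at most $2\delta$ (after, if necessary, replacing one side by a short marker path). Given $G$ and $S$, I would repeatedly locate such a small cutset, delete it, keep the piece carrying more than half of the remaining weight of $S$, and recurse, accumulating the union $X_1 \cup X_2 \cup \cdots$ of cutsets used; as soon as the current piece is basic we stop and combine the $X_i$'s with a bounded balanced separator of that basic piece.

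The main obstacle — and the technical heart of the argument, where the promised partition of the star cutsets into a bounded number of well-behaved collections enters — is that a single small cutset need not be $S$-balanced, so the iteration could in principle spiral through an unbounded nested sequence of small cutsets, each shaving off only a negligible part of $S$. To rule this out I would show that a long nested sequence of non-$S$-balanced star cutsets and 2-joins forces a long ``linear'' substructure in $G$ (nested cutsets produce long induced paths with tightly controlled attachments), and that stacking boundedly many such substructures, while respecting $C_4$-freeness and bounded degree and tracking parities via the odd signing, produces a theta, a prism, or an even wheel — none of which can occur in a $C_4$-free odd-signable graph — a contradiction. Hence the iteration terminates after a number of steps bounded in terms of $\delta$, the cutsets used have bounded total size, and the reduction in the first step yields a treewidth bound $k = k(\delta)$.
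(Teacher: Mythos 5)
Your reductions are sound as far as they go: passing to $C_4$-free odd-signable graphs, converting a treewidth bound into the existence of balanced separators, and noting that bounded degree makes star cutsets (size at most $\delta+1$) and the sides $A_i\cup B_i$ of a 2-join (size at most $2\delta$) small are all correct and consistent with the paper. The genuine gap is the termination of your recursion, which you dispose of in one sentence: ``a long nested sequence of non-$S$-balanced star cutsets and 2-joins forces a long linear substructure \dots and stacking boundedly many such substructures produces a theta, a prism, or an even wheel.'' This is not an argument but a restatement of the entire difficulty, and it is not how the obstruction is overcome. The decomposition theorem only guarantees \emph{some} star cutset or 2-join in the current piece; each application may shave off an arbitrarily small-weight part of $S$, so your iteration may run for $\Omega(|V(G)|)$ steps and the accumulated union of cutsets is then unbounded. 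No forbidden-induced-subgraph contradiction is known to arise from a long nested sequence of skewed star cutsets (chains of basic pieces glued along star cutsets are perfectly possible in this class), and you give no construction of the claimed theta/prism/even wheel.

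The paper's route around this is genuinely different and is where all the work lies. It restricts attention to star cutsets arising from explicit ``forcers'' (proper wheels, universal wheels, short pyramids, terminal twin wheels), proves via theta-freeness that two canonical star separations can cross only if their centers are not anticomplete (Lemma~\ref{lemma:star_sepns_cross}), partitions the centers into at most $f(2,\delta)$ families of pairwise-anticomplete cliques (Lemma~\ref{lemma:csc_centers_partition}) so that each family yields a \emph{laminar} collection, and then processes each whole family in one round by passing to the central bag of the corresponding tree decomposition with a reassigned weight function (Lemma~\ref{lemma:biglemma}); the number of rounds is bounded by the number of families, not by counting nested cutsets, and the separator pulled back through the rounds stays $d$-bounded. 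Moreover 2-joins are never used as separators in the recursion: once a central bag has no star cutset, the paper finishes by the 2-join decomposition tree and rankwidth machinery (rankwidth at most $3$, hence $\tw\le 45\delta-1$ by excluding $K_{\delta+1,\delta+1}$, Corollary~\ref{cor:no_sc_bounded_tw}) together with Lemma~\ref{lemma:tw_sep}. Without a proof of your termination claim, or a substitute for the laminar/central-bag mechanism, your argument does not yield a bound $k(\delta)$.
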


The main result of the present paper is the proof of Conjecture \ref{mainconj}, in fact, the following slight strengthening of it.
We {\em sign} a graph $G$ by assigning $0,1$ weights to its edges.
A graph $G$ is {\em odd-signable} if there exists a signing such that
every triangle and every hole in $G$ has odd weight. Thus even-hole-free
graphs are a subclass of odd-signable graphs.

\begin{theorem} \label{mainthm}
  For every $\delta \geq 0$ there exists $k$ such that  $C_4$-free
  odd-signable graphs   with maximum degree at most $\delta$ have treewidth at
  most $k$. 
  \end{theorem}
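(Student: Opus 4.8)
The plan is to rely on the well-known equivalence between bounded treewidth and the existence of small balanced separators, and then to produce such separators for $C_4$-free odd-signable graphs by combining their decomposition theorems with the degree bound. Recall that there is an absolute constant $c$ with the following property: if for every weight function $w\colon V(G)\to\mathbb{R}_{\ge 0}$ there is a set $X\subseteq V(G)$ with $|X|\le k$ such that $w(D)\le\tfrac12 w(V(G))$ for every component $D$ of $G-X$ (call such an $X$ a \emph{$w$-balanced separator}), then $\tw(G)\le ck$. So it suffices to find $k=k(\delta)$ such that every $C_4$-free odd-signable graph $G$ with maximum degree at most $\delta$ admits a $w$-balanced separator of size at most $k$ for every weight function $w$.

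The starting observation is that, once the maximum degree is bounded, all the cutsets produced by the known decomposition theorems for even-hole-free graphs (and their $C_4$-free odd-signable extensions) are small: a clique has at most $\delta+1$ vertices, a star cutset $N[v]$ has at most $\delta+1$ vertices, and in a $2$-join $(X_1,X_2)$ with special sets $A_1,B_1\subseteq X_1$ and $A_2,B_2\subseteq X_2$, completeness of $A_1$ to $A_2$ and of $B_1$ to $B_2$ forces $|A_1|+|A_2|+|B_1|+|B_2|\le 4\delta$, so $A_1\cup A_2\cup B_1\cup B_2$ is a cutset of bounded size. The difficulty is therefore not the size of any single cutset but that these cutsets \emph{cross}: a priori no bounded subcollection of them is pairwise noncrossing, so they do not assemble into a tree decomposition, and a single one of them need not be $w$-balanced.

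To handle this I would use the \emph{central bag} method. Fix $w$ with $w(V(G))=1$ and suppose, for contradiction, that $G$ has no $w$-balanced separator of size at most $k$. Build an induced subgraph $G[\beta]$ iteratively so that it is (i) \emph{heavy}: every component of $G-\beta$ has weight at most $\tfrac12$, so that $\beta$ itself would be a $w$-balanced separator were it small; and (ii) \emph{clean}: no small cutset of $G[\beta]$, once the bounded-weight branches of $G-\beta$ attached to it are taken into account, splits $\beta$ into a balanced pair. Positivity of the weights makes the process terminate, and the assumption that $G$ has no small balanced separator forces it to terminate with $w(\beta)>\tfrac12$ and $|\beta|>k$. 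Since $C_4$-free odd-signable graphs are hereditary, $G[\beta]$ is again $C_4$-free odd-signable with maximum degree at most $\delta$; applying the decomposition theorem to it yields either a (small) clique cutset, star cutset, or $2$-join, or a basic graph, and in the basic case a direct analysis, using the degree bound, shows $\tw(G[\beta])\le k$, contradicting $|\beta|>k$ together with cleanness.

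The remaining, and hardest, case is the non-basic one: a small cutset of $G[\beta]$ need not be a cutset of $G$, since the discarded branches of $G-\beta$ may reconnect its sides, so it cannot be used verbatim in the construction. The key new ingredient — going beyond the earlier decomposition theorems — is to show that the star cutsets (and $2$-join boundaries) arising this way can be partitioned into a number of \emph{well-behaved}, essentially laminar, collections that is bounded in terms of $\delta$, using that every branch of $G-\beta$ attaches to $\beta$ through a bounded boundary and that only boundedly many attachment types matter; from a bounded union of these cutsets one then extracts a genuine $w$-balanced separator of $G$, a contradiction. The whole point of the $C_4$-free odd-signable hypothesis (together with bounded degree) is precisely that it excludes all the unavoidable high-treewidth induced subgraphs — large cliques, large complete bipartite graphs, large walls, and line graphs of large walls — which is what keeps the central bag tame enough for this partitioning to exist. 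Carrying out this analysis of how the many small star cutsets interact is where essentially all the work of the proof lies.
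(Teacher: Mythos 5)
Your overall skeleton --- reduce treewidth to balanced separators, then iterate a ``central bag'' construction and split the star cutsets into boundedly many laminar collections --- is indeed the paper's strategy, but at each point where the real work happens your sketch substitutes an assertion for an argument, and some of those assertions are unsupported or not how the argument can go. First, the termination/simplification mechanism is not ``positivity of the weights'': what actually drives the iteration is the notion of a \emph{forcer}. In a $C_4$-free odd-signable graph, proper wheels, universal wheels, short pyramids and terminal twin wheels each force a clique star cutset whose canonical separation removes part of the forcer itself; two crossing proper star separations must have centers that are not anticomplete (this uses theta-freeness and the absence of clique cutsets), and the degree bound lets one color the centers into $f(2,\delta)$ pairwise-anticomplete families, which is exactly where the laminar collections come from. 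After decomposing by each family in turn (with clique-cutset rounds in between), no forcer remains active in the central bag, and only then --- via the decomposition theorem and a separate, delicate analysis of twin wheels --- can one conclude that the final bag has no star cutset. Your sketch never says why the cutsets you want to use can be organized laminarly (the ``bounded boundary / attachment types'' reason you give is not the actual one), nor why the iteration ever reaches a graph to which your base-case analysis applies.

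Second, the endgame is off. The paper does not put 2-join boundaries into the laminar collections at all: laminarity is proved only for proper star separations with anticomplete clique centers, and there is no analogous crossing lemma for 2-joins. Instead, once the bag is clean and has no star cutset, 2-joins are handled by a different tool entirely --- the 2-join decomposition tree of Trotignon--Vu\v{s}kovi\'c plus Le's result gives rankwidth at most $3$, and $K_{r,r}$-freeness (from the degree bound) converts this to treewidth at most $45\delta-1$, which yields the balanced separator and the contradiction. Relatedly, you never explain how a balanced separator of the central bag pulls back to one of $G$; this is why the paper works with ball-bounded (``$d$-bounded'') separators rather than merely small ones, and why the weight of each deleted part is reassigned to an anchor with control of $w^{\max}$ (each vertex lies in at most $2^\delta$ cliques). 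Finally, the heuristic that the hypothesis ``excludes walls and line graphs of walls'' plays no role in the proof; what is used is the decomposition theorem for $C_4$-free odd-signable graphs. As it stands, the proposal names the right landmarks but leaves the essential steps --- laminarity, termination via forcers, the no-star-cutset case, and the weight bookkeeping --- unproved.
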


It follows from Theorem \ref{mainthm} that vertex coloring, maximum stable set, and 
many other NP-hard algorithmic problems can be solved in polynomial time for
even-hole-free graphs with bounded maximum degree.
Another consequence of Theorem \ref{mainthm} is that even-hole-freeness is testable in the bounded degree graph model of property testing,
since even-hole-freeness is expressible in monadic second-order logic with modulo counting (CMSO) and CMSO is testable on bounded treewidth \cite{Adler2018}.
See \cite{Aboulker2020OnGraphs} for an excellent survey that motivates the study of Conjecture \ref{mainconj} and surrounding problems, and in particular contains a detailed discussion of property testing algorithms.

\subsection{Outline of the proof of Theorem  \ref{mainthm}}

A graph $G$ has bounded treewidth if and only if every connected component of $G$ has bounded treewidth. Therefore, we prove that connected $C_4$-free odd-signable graphs with bounded degree have bounded treewidth.

In \cite{Harvey2017ParametersTreewidth}, a number of parameters tied to treewidth are discussed. Let $G$ be a graph, let $c \in [\frac{1}{2}, 1)$, and let $k$ be a nonnegative integer. For $S \subseteq V(G)$, a \emph{$(k, S, c)^*$-separator} is a set $X \subseteq V(G)$ with $|X| \leq k$ such that every component of $G \setminus X$ contains at most $c|S|$ vertices of $S$. The \emph{separation number} $\sep_c^*(G)$ is the minimum $k$ such that there exists a $(k, S, c)^*$-separator for every $S \subseteq V(G)$. The separation number is related to treewidth through the following lemma. 

\begin{lemma}[\cite{Harvey2017ParametersTreewidth}]
\label{lemma:sep_tw}
For every graph $G$ and for all $c \in [\frac{1}{2}, 1)$, the following holds:
\begin{equation*} \sep_c^*(G) \leq \tw(G) + 1 \leq \frac{1}{1-c} \sep_c^*(G). \end{equation*}
\end{lemma}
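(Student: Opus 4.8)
The plan is to prove the two inequalities separately; the first is the classical fact that bounded treewidth yields small balanced separators, and the second is where the real content lies.

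\emph{The bound $\sep_c^*(G) \le \tw(G)+1$.} Fix $S \subseteq V(G)$ and a tree decomposition $(T,\chi)$ of $G$ of width $\tw(G)$, so every bag has at most $\tw(G)+1$ vertices. For each $v \in S$ pick a node $r(v)$ with $v \in \chi(r(v))$, and recall that by condition (iii) the nodes whose bag contains $v$ induce a subtree $T_v \ni r(v)$. Orient each edge $e$ of $T$ toward a side $P$ of $T-e$ with $\#\{v \in S : r(v) \in P\} \ge |S|/2$ (one of the two sides always qualifies). Since $T$ is a tree this orientation has no directed cycle, so following it from any vertex reaches a sink $t^*$; then every component $T_C$ of $T-t^*$ satisfies $\#\{v \in S : r(v) \in T_C\} \le |S|/2$. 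I claim the bag $X := \chi(t^*)$ is a $(\tw(G)+1,S,c)^*$-separator. Indeed, if $C$ is a component of $G \setminus X$ then for each $v \in C$ the subtree $T_v$ misses $t^*$ and hence lies in one component of $T-t^*$; since $C$ is connected and every edge of $C$ is covered by a common bag (condition (ii)), all these subtrees lie in the \emph{same} component $T_C$ of $T-t^*$, whence $|C \cap S| \le \#\{v \in S : r(v) \in T_C\} \le |S|/2 \le c|S|$, the last step using $c \ge \tfrac12$. As $S$ was arbitrary, $\sep_c^*(G) \le \tw(G)+1$.

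\emph{The bound $\tw(G)+1 \le \tfrac{1}{1-c}\sep_c^*(G)$.} Here I would pass through the bramble duality theorem of Seymour and Thomas, which asserts that $\tw(G)+1$ equals the maximum order of a bramble of $G$, the \emph{order} of a bramble $\mathcal{B}$ being the least size of a set meeting every member of $\mathcal{B}$. Write $k := \sep_c^*(G)$, let $\mathcal{B}$ be an arbitrary bramble, and let $h$ be its order; it suffices to prove $h \le \tfrac{k}{1-c}$, and we may assume $h > k$. Fix a minimum hitting set $S$ of $\mathcal{B}$, so $|S|=h$, and apply the defining property of $\sep_c^*$ to $S$: there is $X$ with $|X| \le k$ such that every component of $G \setminus X$ meets $S$ in at most $c|S| = ch$ vertices. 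Since $|X| \le k < h$, the set $X$ is not a hitting set of $\mathcal{B}$, so $\mathcal{B}' := \{B \in \mathcal{B} : B \cap X = \emptyset\}$ is nonempty; its members are connected, avoid $X$, and pairwise touch, and therefore all lie inside a single component $C$ of $G \setminus X$. Any hitting set of $\mathcal{B}'$ together with $X$ hits all of $\mathcal{B}$, so $\mathcal{B}'$ has order at least $h-k$; on the other hand $S \cap C$ meets every member of $\mathcal{B}'$ (each such member being contained in $C$), so $|S \cap C| \ge h-k$. Combining with $|S \cap C| \le ch$ gives $h-k \le ch$, i.e.\ $h \le \tfrac{k}{1-c}$, as required.

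\emph{Main difficulty.} The first inequality is routine once the weighted centroid is in place; the crux is the second, where the key decision is to invoke bramble (equivalently haven) duality rather than to construct a tree decomposition of width $\tfrac{k}{1-c}$ by hand. A direct recursion is possible but delicate: one processes a component of $G$ minus a bounded ``boundary'' set, puts that boundary together with a balanced separator into a bag, and recurses on the resulting smaller pieces; the subtle point is that the boundary inherited by a child piece could a priori be as large as the entire current bag, so the balanced separator must be chosen to simultaneously cut the current region and bound how much of the old boundary each child sees, and arranging that the induced recursion on boundary sizes stays below $\tfrac{k}{1-c}$ takes care. Going through duality sidesteps all of this bookkeeping.
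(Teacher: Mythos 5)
Your proof is correct. Note, however, that the paper does not prove this lemma at all: it is quoted verbatim from Harvey and Wood's survey \cite{Harvey2017ParametersTreewidth} and used as a black box, so there is no in-paper argument to compare against. Both halves of your argument check out: the first direction is the standard ``centroid bag'' argument (orient each tree edge toward the side carrying at least half of the representatives of $S$, take a sink, and use condition (ii) of a tree decomposition to see that each component of $G$ minus that bag keeps all its subtrees $T_v$ in one component of $T-t^*$, so it meets $S$ in at most $|S|/2\le c|S|$ vertices); the second direction correctly invokes Seymour--Thomas duality, and the counting $h-k\le |S\cap C|\le ch$ is sound, including the reduction to the case $h>k$ and the observation that the members of $\mathcal{B}'$ pairwise touch and avoid $X$, hence lie in one component $C$ of $G\setminus X$. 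Compared with the usual textbook route (and the style of argument in the cited survey), which constructs a tree decomposition of width roughly $\frac{1}{1-c}\sep_c^*(G)$ by a recursion that must carefully control the size of the inherited boundary sets, your use of bramble duality trades that bookkeeping for an appeal to a deep external theorem; it yields exactly the stated constant with a shorter argument, at the cost of being less elementary and not producing an explicit decomposition. Either way, the lemma as stated is fully justified by your proposal.
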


 A set $S \subseteq V(G)$ is \emph{$d$-bounded} if there exist $v_1, \hdots, v_{d'}$, with $d' \leq d$, such that $S \subseteq N^d[v_1] \cup \hdots \cup N^d[v_{d'}]$. 
 { 
 For a graph $G$ and weight function $w$ on its vertices,
  if $X$ is a subgraph of $G$ or a subset of $V(G)$, then $w(X)$ is the sum of the weights of vertices in $X$. }
 Let $G$ be a graph and let $w: V(G) \to [0, 1]$ be a weight function of $G$ such that $w(G) = 1$. By $w^{\max}$ we denote the maximum weight of a vertex in $G$. 
A set $Y \subseteq V(G)$ is a \emph{$(w, c, d)$-balanced separator} of $G$ if $Y$ is $d$-bounded and $w(Z) \leq c$ for every component $Z$ of $G \setminus Y$. The following lemma shows that if $G$ is a graph with maximum degree $\delta$ and $G$ has a $(w, c, d)$-balanced separator for every weight function $w:V(G) \to [0, 1]$ with $w(G) = 1$, then $G$ has bounded treewidth. 
 
 \begin{lemma} 

\label{lemma:boundedtw}
Let $\delta, d$ be positive integers with $\delta \leq d$, let $c \in [\frac{1}{2}, 1)$, and let $\Delta(d) = d + d\delta + d\delta^2 + \hdots + d \delta^d$. Let $G$ be a graph with maximum degree $\delta$. Suppose that for every $w: V(G) \to [0, 1]$ with $w(G) = 1$ and $w^{\max} < \frac{1}{\Delta(d)}$, $G$ has a $(w, c, d)$-balanced separator.  Then, $\tw(G) \leq \frac{1}{1-c} \Delta(d)$. \end{lemma}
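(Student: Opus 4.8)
The plan is to bound the separation number and then invoke Lemma~\ref{lemma:sep_tw}. Since that lemma gives $\tw(G) + 1 \le \frac{1}{1-c}\sep_c^*(G)$, it suffices to prove that $\sep_c^*(G) \le \Delta(d)$; this would in fact yield the slightly stronger bound $\tw(G) \le \frac{1}{1-c}\Delta(d) - 1$. So I fix an arbitrary $S \subseteq V(G)$ and look for a $(\Delta(d), S, c)^*$-separator.

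If $|S| \le \Delta(d)$, I simply take $X = S$: then $G \setminus X$ contains no vertex of $S$, so every component of $G \setminus X$ meets $S$ in at most $0 \le c|S|$ vertices, and $|X| = |S| \le \Delta(d)$, as required. If instead $|S| > \Delta(d)$ (so in particular $S \neq \emptyset$), I define the uniform weight function $w : V(G) \to [0,1]$ by $w(v) = 1/|S|$ for $v \in S$ and $w(v) = 0$ otherwise. Then $w(G) = 1$ and $w^{\max} = 1/|S| < 1/\Delta(d)$, so the hypothesis supplies a $(w, c, d)$-balanced separator $Y$. Since $Y$ is $d$-bounded, $Y \subseteq N^d[v_1] \cup \cdots \cup N^d[v_{d'}]$ for some $d' \le d$ and some vertices $v_1, \dots, v_{d'}$. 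A ball of radius $d$ in a graph of maximum degree $\delta$ has at most $1 + \delta + \delta^2 + \cdots + \delta^d$ vertices (count by distance from the center), so $|Y| \le d(1 + \delta + \cdots + \delta^d) = \Delta(d)$. Finally, for every component $Z$ of $G \setminus Y$ we have $w(Z) \le c$, and since $w(Z) = |Z \cap S|/|S|$ this gives $|Z \cap S| \le c|S|$. Hence $Y$ is a $(\Delta(d), S, c)^*$-separator.

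This establishes $\sep_c^*(G) \le \Delta(d)$, and the conclusion follows from Lemma~\ref{lemma:sep_tw}. There is no real obstacle in this argument; the only two points that require a bit of care are that the case of small $S$ must be peeled off separately, because the uniform weight function on $S$ violates the constraint $w^{\max} < 1/\Delta(d)$ precisely when $|S| \le \Delta(d)$, and that the definition of $\Delta(d)$ is exactly what is needed: it dominates $d$ times the size of a radius-$d$ ball in a graph of maximum degree $\delta$, which is the bound on any $d$-bounded set.
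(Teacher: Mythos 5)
Your proof is correct and follows essentially the same route as the paper's: the same case split on $|S| \le \Delta(d)$ versus $|S| > \Delta(d)$, the same uniform weight function on $S$, the same use of the bound $\Delta(d)$ on the size of a $d$-bounded set, and the same appeal to Lemma~\ref{lemma:sep_tw}. The only differences are cosmetic, namely that you spell out the ball-counting argument and note the marginally sharper conclusion $\tw(G) \le \frac{1}{1-c}\Delta(d) - 1$.
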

\begin{proof}
Note that $\Delta(d)$ is an upper bound for the size of a $d$-bounded set in $G$.  Let $S \subseteq V(G)$. If $|S| \leq \Delta(d)$, then $S$ is a $(\Delta(d), S, c)^*$-separator of $G$. Now, assume $|S| > \Delta(d)$. Let $w_S:V(G) \to [0, 1]$ be such that $w_S(v) = \frac{1}{|S|}$ for $v \in S$ and $w_S(v) = 0$ for $v \in V(G) \setminus S$. Then, $w_S(G) = 1$ and $w_S^{\max} < \frac{1}{\Delta(d)}$, so $G$ has a $(w_S, c, d)$-balanced separator. Specifically, for all $S \subseteq V(G)$ such that $|S| > \Delta(d)$, there exists a set $X$ such that $|X| \leq \Delta(d)$, and $w_S(Z) \leq c$ for all components $Z$ of $G \setminus X$. It follows that $X$ is a $(\Delta(d), S, c)^*$-separator of $G$. Therefore, $G$ has  a $(\Delta(d), S, c)^*$-separator for every $S \subseteq V(G)$. It follows that $\sep_c^*(G) \leq \Delta(d)$, and by Lemma \ref{lemma:sep_tw}, $\tw(G) \leq \frac{1}{1-c} \Delta(d)$. 
\end{proof}

 In this paper, we prove that connected $C_4$-free odd-signable graphs with bounded degree have bounded treewidth. Specifically, we prove the following theorem:
\begin{restatable}{theorem}{mainthm}
\label{thm:mainthm}
Let $\delta, d$ be positive integers. Let $G$ be a connected $C_4$-free odd-signable graph with maximum degree $\delta$ and let $w:V(G) \to [0, 1]$ be a weight function such that $w(G) = 1$. Let $f(2, \delta) = 2(\delta + 1)^2 + 1$, and let $c \in [\frac{1}{2}, 1)$. {  Assume that} $d \geq 49\delta + {  4}f(2, \delta)\delta -4$ and $(1-c) + [w^{\max} + 3f(2, \delta)\delta 2^{\delta}(1-c) + 2(\delta - 1)2^{\delta}(1-c)](\delta + \delta^2) < \frac{1}{2}$.  Then, $G$ has a $(w, c, d)$-balanced separator.
\end{restatable}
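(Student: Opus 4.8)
\emph{Proof strategy.} The plan is to build a $(w,c,d)$-balanced separator $Y$ of $G$ by hand, exploiting the fact that a $C_4$-free odd-signable graph can be broken up by star cutsets, clique cutsets, and $2$-joins, and that the bounded-degree hypothesis forces the pieces meeting a cutset to be small. Note first that the arithmetic hypothesis forces $w^{\max}<\frac{1}{2(\delta+\delta^2)}$, so in particular no single vertex has weight $\ge\frac12$; throughout we think of $1-c$ as a fixed ``slack'' that we are allowed to spend a bounded number of times. We may also assume that $G$ is not basic in the sense of the decomposition theorem for this class: a basic such graph of maximum degree $\delta$ (a clique, necessarily on at most $\delta+1$ vertices, a hole, or one of a short list of bounded-treewidth configurations) has $\tw(G)$ bounded by a function of $\delta$ alone, so Lemma~\ref{lemma:sep_tw} hands us a balanced separator of bounded — hence trivially $d$-bounded — size.

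\emph{Following a heavy branch.} Call $X\subseteq V(G)$ \emph{heavy} if $w(X)>\frac12$; since removing any cutset leaves at most one heavy component, we can follow a unique ``heavy branch'' through the decomposition: starting from $G$ (which is heavy), repeatedly apply a star cutset, clique cutset, or $2$-join, pass to the unique heavy side, and record the cutset used. We stop when a decomposition leaves no heavy side, or when the heavy side is basic or of bounded size. The separator $Y$ will then be the union of (a) for each star/clique cutset recorded, the closed neighbourhood of its centre (a clique dominated within radius $1$ by that centre), (b) for each $2$-join recorded, the bounded set of vertices near its frontier on the heavy side, and (c) the final piece if it is of bounded size. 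The three key points are: \emph{(i)} only boundedly many cutsets (in terms of $\delta$ alone) are recorded along the heavy branch; \emph{(ii)} each recorded $2$-join contributes only boundedly many vertices, using that a $2$-join fragment of a $C_4$-free odd-signable graph of maximum degree $\delta$ with no clique cutset has at most $f(2,\delta)=2(\delta+1)^2+1$ vertices and is attached through at most $\delta$ vertices; and \emph{(iii)} each contributed piece is dominated within bounded radius by a bounded set. Granting these, $Y$ is dominated by at most a $\delta$-dependent number of balls, each of $\delta$-bounded radius, and the explicit bound $d\ge 49\delta+4f(2,\delta)\delta-4$ is exactly what makes $Y$ a $d$-bounded set (the term $49\delta$ paying for the chained star cutsets and the final basic piece, and $4f(2,\delta)\delta$ for the $2$-join fragments).

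\emph{Checking the separator is balanced.} Let $Z$ be a component of $G\setminus Y$. By construction $Z$ lies on the light side of every decomposition recorded along the heavy branch, so its weight would be at most $1-c$ were it not that, at each such decomposition, a few boundary vertices that we chose \emph{not} to place in $Y$ may let $Z$ reach a little extra weight; there are at most $\delta+\delta^2$ such boundary vertices relevant to $Z$ at a given step (they lie in the light-side neighbourhood of a cutset centre or of a $2$-join frontier), and, grouping components of the light side by their trace on the small frontier — of which there are at most $2^{\delta}$ — each contributes at most $w^{\max}+3f(2,\delta)\delta 2^{\delta}(1-c)+2(\delta-1)2^{\delta}(1-c)$. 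Adding these contributions to the base slack $1-c$ and applying the hypothesis
\[
(1-c)+\bigl[w^{\max}+3f(2,\delta)\delta 2^{\delta}(1-c)+2(\delta-1)2^{\delta}(1-c)\bigr](\delta+\delta^2)<\tfrac12\le c
\]
yields $w(Z)<c$. Hence $Y$ is a $(w,c,d)$-balanced separator of $G$.

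\emph{Where the difficulty lies.} The routine parts are the basic cases, the final count showing $Y$ is $d$-bounded, and the weight bookkeeping above. The real obstacle — and what the technical sections preceding this theorem are devoted to — is \emph{(i)} and \emph{(ii)}: controlling how star cutsets and $2$-joins of a $C_4$-free odd-signable graph of bounded degree interact, so that the heavy branch meets only boundedly many of them and each $2$-join contributes only a bounded frontier. This is precisely the assertion, advertised in the abstract, that the a priori unbounded and badly crossing collection of star cutsets of such a graph can be organised into a bounded number of well-behaved families; proving it requires a careful analysis of the ends of $2$-joins and of the ways a star cutset can cross a $2$-join, and is the crux of the whole argument.
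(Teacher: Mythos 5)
Your proposal does not follow the paper's route, and the two steps you yourself flag as the crux --- (i) and (ii) --- are not established anywhere, and in fact cannot be established as stated. For (i): there is no mechanism bounding, in terms of $\delta$ alone, the number of cutsets recorded along a ``heavy branch.'' Each star or clique cutset peels off a light side of weight less than $1-c$, so the heavy side can stay heavy through arbitrarily many steps; consequently your separator $Y$, which contains $N[\cdot]$ of every recorded center, need not be $d$-bounded for any fixed $d$. The paper never bounds the number of star separations used; instead it bounds the number of \emph{laminar families} of canonical star separations (via Lemma~\ref{lemma:csc_centers_partition}, which partitions the forcer centers into $f(2,\delta)$ pairwise-anticomplete classes, and Lemma~\ref{lemma:star_sepns_cross}, which makes each class laminar in the absence of clique cutsets), and then decomposes by \emph{all} separations of one family simultaneously, passing to the central bag of Lemma~\ref{lemma:biglemma}. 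The unboundedly many separations in a family are absorbed because each vertex anchors at most $2^{\delta}$ of them, which is where the $2^{\delta}(1-c)$ weight-inflation terms in the hypothesis actually come from --- not from a per-step count of $2^{\delta}$ frontier traces as in your bookkeeping. For (ii): the claim that a $2$-join fragment of a $C_4$-free odd-signable graph with no clique cutset has at most $f(2,\delta)$ vertices is false; the sides of a $2$-join are not bounded, and $f(2,\delta)=2(\delta+1)^2+1$ is a chromatic-number bound for the conflict graph of cliques of size at most $2$, unrelated to $2$-joins.

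The paper also treats $2$-joins in a fundamentally different (and necessary) way: it first eliminates \emph{all} star cutsets, by iterating central bags over clique cutsets (Theorem~\ref{thm:clique_free_bag}), strong forcers (Theorems~\ref{thm:bigthm} and~\ref{thm:strong_forcers_clean}) and twin wheel forcers (Theorem~\ref{thm:no_twin_wheel}, Lemma~\ref{lemma:no_sc}), and only then invokes the $2$-join decomposition tree machinery (Lemma~\ref{l:2jdt}, Theorem~\ref{t:le}) to get rankwidth at most $3$, hence treewidth at most $45\delta-1$ by $K_{\delta+1,\delta+1}$-freeness (Theorem~\ref{t:gw}), hence a balanced separator by Lemma~\ref{lemma:tw_sep}, contradicting the assumption that none exists. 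Your plan of adding a ``bounded frontier'' per $2$-join to a hand-built separator has no analogue in the paper and no proof here. So the proposal, as written, has a genuine gap at exactly the points carrying the weight of the theorem; deferring them to ``the technical sections preceding this theorem'' does not close it, because those sections prove different statements that do not imply (i) or (ii).
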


We can then prove our main result: 
\begin{theorem}\label{thm:result_boundedtw}
Let $\delta$ be a positive integer and let $G$ be a connected $C_4$-free odd-signable graph with maximum degree $\delta$. Then, there exists $c \in [\frac{1}{2}, 1)$ and positive integer $d \geq \delta$ such that $\tw(G) \leq \frac{1}{1-c}(d + d\delta + d\delta^2 + \hdots + d\delta^d)$. 
\end{theorem}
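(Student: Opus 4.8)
The plan is to obtain Theorem~\ref{thm:result_boundedtw} as an immediate consequence of Theorem~\ref{thm:mainthm} and Lemma~\ref{lemma:boundedtw}, the only work being the choice of the parameters $c$ and $d$. Note first that $f(2,\delta) = 2(\delta+1)^2 + 1$ depends only on $\delta$, so the quantity
\[ C(\delta) := 3f(2,\delta)\delta 2^{\delta} + 2(\delta-1)2^{\delta} \]
(the coefficient of $(1-c)$ inside the bracket in the hypothesis of Theorem~\ref{thm:mainthm}) is a constant depending only on $\delta$, and likewise $49\delta + 4f(2,\delta)\delta - 4$ is. Observe also that the left-hand side of the analytic hypothesis of Theorem~\ref{thm:mainthm} can be rewritten as $(1-c)\bigl(1 + C(\delta)(\delta+\delta^2)\bigr) + w^{\max}(\delta+\delta^2)$.

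First I would fix $c \in [\tfrac12, 1)$ close enough to $1$ that
\[ (1-c)\bigl(1 + C(\delta)(\delta + \delta^2)\bigr) < \tfrac14; \]
this is possible because the left-hand side is a fixed positive multiple of $1-c$ and hence tends to $0$ as $c \to 1$. Having fixed $c$, I would then choose a positive integer $d$ with
\[ d \ge \max\{\,\delta,\ 49\delta + 4f(2,\delta)\delta - 4\,\} \]
and large enough that $\Delta(d) := d + d\delta + d\delta^2 + \dots + d\delta^d$ satisfies $\Delta(d) > 4(\delta + \delta^2)$; this is possible since $\Delta(d) \to \infty$ as $d \to \infty$. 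In particular $\delta \le d$, so $G$ together with these parameters meets the standing hypotheses of both Theorem~\ref{thm:mainthm} and Lemma~\ref{lemma:boundedtw}.

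Next I would verify the analytic hypothesis of Theorem~\ref{thm:mainthm} for every weight function relevant to Lemma~\ref{lemma:boundedtw}. Let $w : V(G) \to [0,1]$ be any weight function with $w(G) = 1$ and $w^{\max} < \tfrac{1}{\Delta(d)}$. Then $w^{\max}(\delta + \delta^2) < \tfrac14$ by the choice of $d$, and hence, using the choice of $c$,
\[ (1-c) + \bigl[w^{\max} + 3f(2,\delta)\delta 2^{\delta}(1-c) + 2(\delta-1)2^{\delta}(1-c)\bigr](\delta + \delta^2) < \tfrac14 + \tfrac14 = \tfrac12. \]
Since moreover $d \ge 49\delta + 4f(2,\delta)\delta - 4$, Theorem~\ref{thm:mainthm} applies and yields a $(w, c, d)$-balanced separator of $G$. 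As $w$ was an arbitrary weight function with $w(G) = 1$ and $w^{\max} < \tfrac{1}{\Delta(d)}$, and $\delta \le d$, Lemma~\ref{lemma:boundedtw} now gives
\[ \tw(G) \le \frac{1}{1-c}\,\Delta(d) = \frac{1}{1-c}\bigl(d + d\delta + d\delta^2 + \dots + d\delta^d\bigr), \]
which is the desired bound, with the required $c \in [\tfrac12,1)$ and positive integer $d \ge \delta$.

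I do not expect a genuine obstacle in this argument: all the substance lies in Theorem~\ref{thm:mainthm} (whose proof occupies the bulk of the paper) and in Lemma~\ref{lemma:boundedtw}. The only point requiring a little care is that the hypothesis of Theorem~\ref{thm:mainthm} contains the term $w^{\max}$, so one cannot quote it uniformly for all weight functions at once; instead one exploits the fact that Lemma~\ref{lemma:boundedtw} only needs balanced separators for weight functions with $w^{\max} < \tfrac{1}{\Delta(d)}$, which lets us absorb the $w^{\max}$ term by taking $d$ large \emph{after} $c$ has already been pinned down. The order of the two choices ($c$ first, then $d$) is what makes the bookkeeping go through.
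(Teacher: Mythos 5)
Your proposal is correct and follows essentially the same route as the paper: both deduce the theorem by choosing parameters so that Theorem~\ref{thm:mainthm} supplies a $(w,c,d)$-balanced separator for every weight function with $w^{\max}<\frac{1}{\Delta(d)}$, and then invoke Lemma~\ref{lemma:boundedtw}. The only difference is bookkeeping: the paper fixes $d$ first and then takes $c$ close to $1$ (which also works since $\frac{\delta+\delta^2}{\Delta(d)}<\frac{1}{2}$), whereas you fix $c$ first and then take $d$ large; both orders are valid.
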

\begin{proof}
{  Let $f(2, \delta) = 2(\delta + 1)^2 + 1$. Let $d$ be an integer such that $d \geq 49\delta + 4f(2, \delta)\delta - 4$, and let
 $\Delta(d) = d + d\delta + d\delta^2 + \hdots + d\delta^d$. 
 Note that there exists 
 $c \in [\frac{1}{2}, 1)$ such that $(1-c) + [\frac{1}{\Delta(d)} + 3f(2, \delta)\delta2^{\delta}(1-c) + 2(\delta-1)2^{\delta}(1-c)](\delta + \delta^2) < \frac{1}{2}$. 
 Let 
 $w:V(G) \to [0, 1]$ be a weight function of $G$ such that $w(G) = 1$ and $w^{\max} < \frac{1}{\Delta(d)}$. 
 Then by Theorem \ref{thm:mainthm}, $G$ has a $(w,c,d)$-balanced separator. The result now
 follows  from  Lemma \ref{lemma:boundedtw}. }
\end{proof}

Let us now discuss the main ideas of the proof of {  Theorem} \ref{thm:mainthm}. We will
give precise definitions of the concepts used below later in the paper; the goal of the next few paragraphs is just to give the reader a road map of where we are going.
 {A \emph{separation} (or {\em decomposition})  of a graph $G$ is a triple of disjoint vertex sets $(A, C, B)$ such that $A \cup C \cup B = V(G)$ and there are no edges from $A$  to $B$. To
``decompose along $(A,C,B)$'' means to delete $A$.}
Usually, to prove a result that a certain graph family has bounded treewidth, one attempts to construct a collection of ``non-crossing separations'', which  roughly  means that the separations
``cooperate'' with each other, and the pieces that are obtained when the graph
is simultaneously decomposed by all the separtions in the collection
``line up'' to form a tree structure. Such collections of separations are
called ``laminar.''

In the case of $C_4$-free odd-signable graphs, there is a natural family of separations  to turn to,  {given by Lemmas \ref{lemma:proper_wheel_forcer}, \ref{lemma:univ_wheel_forcer}, and  \ref{lemma:short_pyramid_forcer}.
A key point here is that all the decompositions above
are forced by the presence of certain induced subgraphs that we call
``forcers.'' In essence it is shown that the corresponding decomposition of the
forcer extends to the whole graph, and when the graph is decomposed along the
decomposition, part of the forcer is removed.}

Unfortunately, the decompositions above are very far from being non-crossing, and therefore
we cannot use them in traditional ways to get tree decompositions. What turns out to be true, however, is that, due to the bound on the maximum degree of the graph, this collection of decompositions can be partitioned into a bounded number
of laminar collections $X_1, \ldots, X_p$ (where $p$ depends on the maximum degree).
We can then proceed as follows. Let $G$ be a connected $C_4$-free odd-signable graph
with maximum degree $\delta$ and let $w:V(G) \to [0, 1]$ be such that
$w(G) = 1$.  {In view of Lemma \ref{lemma:sep_tw}},  to prove Theorem \ref{thm:mainthm}, we would like to show that for
  certain $c$ and $d$,  $G$ has a $(w, c, d)$-balanced separator;
we may  assume that no such separator exists.
  We first decompose
  $G$, simultaneously, by all the decompositions in $X_1$. Since $X_1$ is a
  laminar collection, by Lemma~\ref{lemma:noncrossing-separations-td} this gives a tree decomposition of $G$, and we
  identify one of the bags of this decomposition as the ``central bag'' for $X_1$; denote it by $\beta_1$. Then,  $\beta_1$ {  corresponds to} 
  an induced subgraph   of $G$, and
  we can show that $\beta_1$ has no  $(w_1,c,d_1)$-balanced separator for certain $w_1$ and $d_1$ that depend on $w$ and $d$.
We next  focus on $\beta_1$, and decompose it using $X_2$, and so on.
At step $i$, having decomposed by $X_1,\ldots, X_i$, we focus on a
central bag $\beta_i$  that does not have a  $(w_i,c,d_i)$-separator
for suitably chosen $w_i, d_i$. 

 {The fact that all the separations at play come from forcers
 ensures that}  at step $i$,
after decomposing by $X_1, \ldots, X_i$, none of the forcers that were
``responsible'' for the decompositions in $X_1, \ldots, X_i$ is present in
the central bag $\beta_i$ (as part of each such forcer was removed in the
decomposition process). It then follows that when we reach $\beta_p$, all
we are left with is a ``much simpler'' graph (one that contains no forcers),
where we can find
a $(w_p,c,d_p)$-balanced separator directly, thus obtaining a contradiction.

The remainder of the paper is devoted to proving Theorem \ref{thm:mainthm}. In Section \ref{sec:prelims}, we review key definitions and preliminaries. In Section \ref{sec:laminar}, we define laminar collections of separations, and describe a tree decomposition corresponding to a laminar collection of separations. In Section \ref{sec:clique}, we prove results about clique cutsets and balanced separators. In Sections \ref{sec:forcers} and \ref{sec:twin_wheel_clean}, we define forcers and prove results about forcers, star cutsets, and balanced separators. In Section \ref{sec:no_sc}, we prove a bound on separation number in graphs with no star cutset. Finally, in Section \ref{sec:final}, we prove Theorem \ref{thm:mainthm}.

\subsection{Terminology and notation}
\label{sec:prelims}

Let $G$ and $H$ be graphs. We say that $G$ \emph{contains} $H$ if $G$ has an induced subgraph isomorphic to $H$. We say that $G$ is \emph{$H$-free} if $G$ does not contain $H$. If $\mathcal{H}$ is a set of graphs, we say that $G$ is {\em $\mathcal{H}$-free} if $G$ is $H$-free for every $H \in \mathcal{H}$. For $X \subseteq V(G)$, $G[X]$ denotes the subgraph of $G$ induced by $X$, and $G \setminus X = G[V(G) \setminus X]$. In this paper, we use induced subgraphs and their vertex sets interchangeably. Let $v \in V(G)$. The \emph{open neighborhood of $v$}, denoted $N(v)$, is the set of all vertices in $V(G)$ adjacent to $v$. The \emph{closed neighborhood of $v$}, denoted $N[v]$, is $N(v) \cup \{v\}$. Let $X \subseteq V(G)$. The \emph{open neighborhood of $X$}, denoted $N(X)$, is the set of all vertices in $V(G) \setminus X$ with a neighbor in $X$. The \emph{closed neighborhood of $X$}, denoted $N[X]$, is $N(X) \cup X$. If $H$ is an induced subgraph of $G$ and $X \subseteq H$, then $N_H(X)$ ($N_H[X]$) denotes the open (closed) neighborhood of $X$ in $H$. Let $Y \subseteq V(G)$ be disjoint from $X$. Then, $X$ is \emph{anticomplete} to $Y$ if there are no edges between $X$ and $Y$. We use $X \cup v$ to mean $X \cup \{v\}$. 
 
Given a graph $G$, a {\em path in $G$} is an induced subgraph of $G$ that is a path. If $P$ is a path in $G$, we write $P = p_1 \dd \hdots \dd p_k$ to mean that $p_i$ is adjacent to $p_j$ if and only if $|i-j| = 1$. We call the vertices $p_1$ and $p_k$ the \emph{ends of $P$}, and say that $P$ is \emph{from $p_1$ to $p_k$}. The \emph{interior of $P$}, denoted by $P^*$, is the set $V(P) \setminus \{p_1, p_k\}$. The \emph{length} of a path $P$ is the number of edges in $P$. A \emph{cycle} $C$ is a sequence of vertices $p_1p_2\ldots p_kp_1$,
$k \geq 3$, such that $p_1\ldots p_k$ is a path, $p_1p_k$ is an
edge, and there are no other edges in $C$.  The {\em length} of $C$ is
the number of edges in $C$. We denote a cycle of length four by $C_4$. 

If $v \in V(G)$ and $X \subseteq V(G)$, a \emph{shortest path from $v$ to $X$} is the shortest path with one end $v$ and the other end in $X$.  If $v \in V(G)$, then $N_G^d(v)$  (or $N^d(v)$ when there is no danger of confusion) is the set of all vertices in $V(G)$ at distance exactly $d$ from $v$, and $N_G^d[v]$ (or $N^d[v]$)  is the set of vertices at distance at most $d$ from $v$. Similarly, if $X \subseteq V(G)$,  $N_G^d(X)$ (or $N^d(X)$) is the set of all vertices in $V(G)$ at distance exactly $d$ from $X$, and $N^d[X]$ (or $N^d[X]$) is the set of all vertices in $V(G)$ at distance at most $d$ from $X$. 

Next we describe a few types of graphs that we will need.  {They are illustrated in Figure \ref{fig:three-path-configs}}. 
A \emph{theta} is a graph consisting of three internally vertex-disjoint
paths $P_1 = a \dd \hdots \dd b$, $P_2 = a \dd \dots \dd b$, and
$P_3 = a \dd \dots \dd b$ of length at least 2, such that no edges exist
between the paths except the three edges incident with $a$ and the three
edges incident with $b$.
A \emph{prism} is a graph consisting of three vertex-disjoint paths
$P_1 = a_1 \dd \dots \dd b_1$, $P_2 = a_2 \dd \dots \dd b_2$, and $P_3 = a_3 \dd \dots \dd b_3$ of
length at least 1, such that $a_1a_2a_3$ and $b_1b_2b_3$ are triangles
and no edges exist between the paths except those of the two
triangles.
A \emph{pyramid} is a graph
   consisting of three paths $P_1 = a \dd \dots \dd b_1$,
   $P_2 = a \dd \dots \dd b_2$, and $P_3 = a \dd \dots \dd b_3$ of length at least 1, two
   of which have length at least 2, vertex-disjoint except at $a$, and
   such that $b_1b_2b_3$ is a triangle and no edges exist between the
   paths except those of the triangle and the three edges incident with
   $a$.

A \emph{wheel} $(H, x)$ is a hole $H$ and a vertex $x$ such that $x$ has at least three neighbors in $H$. A wheel $(H,x)$ is {\em even} if $x$ has an even number of neighbors on $H$. The following lemma characterizes odd-signable graphs in terms of forbidden induced subgraphs. 

\begin{figure}[ht]
\begin{center}
\begin{tikzpicture}[scale=0.29]

\node[label=above:{$\scriptstyle{a}$}, inner sep=2.5pt, fill=black, circle] at (0,3)(v1){}; 
\node[inner sep=2.5pt, fill=black, circle] at (3, 1)(v2){}; 
\node[inner sep=2.5pt, fill=black, circle] at (-3, 1)(v3){}; 
\node[inner sep=2.5pt, fill=black, circle] at (0, 1) (v4){};
\node[label=below:{$\scriptstyle{b}$}, inner sep=2.5pt, fill=black, circle] at (0, -7) (v5){};
%\node[inner sep=2.5pt, fill=black, circle] at (3, -3)(v4){}; 
%\node[inner sep=2.5pt, fill=black, circle] at (6, 0)(v5){};

%\node[inner sep=2.5pt, fill=white, circle] at (0, -4.8)(v21){};

\draw[black, thick] (v1) -- (v2);
\draw[black, thick] (v1) -- (v3);
\draw[black, thick] (v1) -- (v4);
\draw[black, dotted, thick] (v2) -- (v5);
\draw[black, dotted, thick] (v3) -- (v5);
\draw[black, dotted, thick] (v4) -- (v5);

\end{tikzpicture}
\hspace{0.7cm}
\begin{tikzpicture}[scale=0.29]

\node[label=above:{$\scriptstyle{a}$}, inner sep=2.5pt, fill=black, circle] at (0,3)(v1){}; 
\node[inner sep=2.5pt, fill=black, circle] at (3, 1)(v2){}; 
\node[inner sep=2.5pt, fill=black, circle] at (-3, 1)(v3){}; 
\node[label=right:{$\scriptstyle{b_1}$}, inner sep=2.5pt, fill=black, circle] at (0, -4)(v4){};
\node[label=right:{$\scriptstyle{b_2}$}, inner sep=2.5pt, fill=black, circle] at (3, -7)(v5){};
\node[label=left:{$\scriptstyle{b_3}$}, inner sep=2.5pt, fill=black, circle] at (-3, -7)(v6){}; 

%\node[inner sep=2.5pt, fill=white, circle] at (0, -4.8)(v21){}; 

\draw[black, thick] (v1) -- (v2);
\draw[black, thick] (v1) -- (v3);
\draw[black, dotted, thick] (v1) -- (v4);
\draw[black, dotted, thick] (v2) -- (v5);
\draw[black, dotted, thick] (v3) -- (v6);
\draw[black, thick] (v4) -- (v5);
\draw[black, thick] (v4) -- (v6);
\draw[black, thick] (v5) -- (v6);

\end{tikzpicture}
\hspace{0.7cm}
\begin{tikzpicture}[scale=0.29]

\node[label=above:{$\scriptstyle{a_1}$}, inner sep=2.5pt, fill=black, circle] at (0, 0)(v1){}; 
\node[label=right:{$\scriptstyle{a_2}$}, inner sep=2.5pt, fill=black, circle] at (3, 3)(v2){}; 
\node[label=left:{$\scriptstyle{a_3}$}, inner sep=2.5pt, fill=black, circle] at (-3, 3)(v3){}; 
\node[label=below:{$\scriptstyle{b_1}$}, inner sep=2.5pt, fill=black, circle] at (0, -4)(v4){}; 
\node[label=right:{$\scriptstyle{b_2}$}, inner sep=2.5pt, fill=black, circle] at (3, -7)(v5){}; 
\node[label=left:{$\scriptstyle{b_3}$}, inner sep=2.5pt, fill=black, circle] at (-3, -7)(v6){};

%\node[inner sep=2.5pt, fill=white, circle] at (0, -4.8)(v21){}; 

\draw[black, thick] (v1) -- (v2);
\draw[black, thick] (v1) -- (v3);
\draw[black, thick] (v2) -- (v3);
\draw[black, dotted, thick] (v1) -- (v4);
\draw[black, dotted, thick] (v2) -- (v5);
\draw[black, dotted, thick] (v3) -- (v6);
\draw[black, thick] (v4) -- (v5);
\draw[black, thick] (v4) -- (v6);
\draw[black, thick] (v5) -- (v6);

\end{tikzpicture}
\hspace{0.7cm}
\begin{tikzpicture}[scale=0.30]
\node[inner sep=2.5pt, fill=black, circle] at (0, 5)(v1){};
 \node[inner sep=2.5pt, fill=black, circle] at (0, 10)(v2){}; 
 \node[inner sep=2.5pt, fill=black, circle] at (-3, 1)(v3){};
 \node[inner sep=2.5pt, fill=black, circle] at (3, 1) (v4){}; 
 
 \draw[black, thick] (v1) -- (v2);
 \draw[black, thick] (v1) -- (v3);
 \draw[black, thick] (v1) -- (v4);
 \draw[black, dotted, thick] (v1) -- (2, 7);
 \draw[black, dotted, thick] (v1) -- (-2, 7);
\vspace{-0.75cm}
\draw[black, thick] (0,5) circle (5);

\end{tikzpicture}
\end{center}
%\vspace{-0.75cm}
\caption{  Theta, pyramid, prism, and wheel}
\label{fig:three-path-configs}
\end{figure}
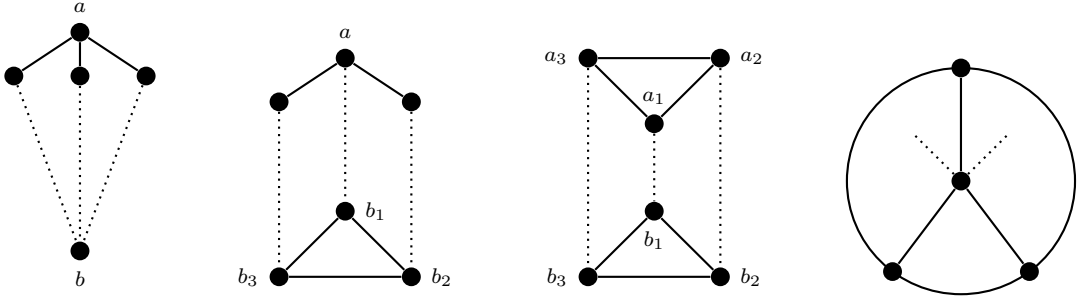

\begin{theorem}\label{os}
{\em (\cite{Conforti1999EvenGraphs})}
A graph is odd-signable if and only if it is
(even wheel, theta, prism)-free.
\end{theorem}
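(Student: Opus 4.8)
This is quoted from \cite{Conforti1999EvenGraphs}; here is how I would prove it. For the ``only if'' direction the plan is a short parity computation, after the trivial observation that odd-signability is hereditary: the restriction to an induced subgraph $H$ of a valid signing of $G$ is valid, since every triangle and every hole of $H$ is also a triangle or hole of $G$. So it suffices to check that even wheels, thetas, and prisms admit no odd signing. For a theta with paths $P_1,P_2,P_3$ from $a$ to $b$, each $P_i\cup P_j$ ($i\ne j$) is a hole, so $w(P_i)+w(P_j)$ is odd for the three pairs; summing gives $2(w(P_1)+w(P_2)+w(P_3))$ odd, a contradiction. For a prism with triangles $a_1a_2a_3$, $b_1b_2b_3$ and paths $P_1,P_2,P_3$, each $P_i\cup P_j\cup\{a_ia_j,b_ib_j\}$ is a hole, so summing the three hole conditions yields $2\sum_i w(P_i)+\sum_{i<j}\bigl(w(a_ia_j)+w(b_ib_j)\bigr)$ odd; but the two triangle conditions force $\sum_{i<j}w(a_ia_j)$ and $\sum_{i<j}w(b_ib_j)$ both odd, so the left side is even, a contradiction. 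For an even wheel $(H,x)$, let $v_1,\dots,v_{2k}$ ($k\ge 2$) be the neighbours of $x$ on $H$ in cyclic order and $A_i$ the subpath of $H$ from $v_i$ to $v_{i+1}$ (indices mod $2k$); then each $x\cup A_i$ is a triangle or a hole, so $w(xv_i)+w(A_i)+w(xv_{i+1})$ is odd, and summing over $i$ gives $2\sum_i w(xv_i)+w(H)$ odd, i.e.\ $w(H)$ even, contradicting that $H$ is a hole.

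For the ``if'' direction the plan is to reformulate odd-signability over $\mathbb{F}_2$. Identifying each triangle and hole $C$ with the indicator vector $\mathbf{1}_{E(C)}\in\mathbb{F}_2^{E(G)}$, a signing is just a vector $w\in\mathbb{F}_2^{E(G)}$, and $G$ is odd-signable iff the affine system $\langle w,\mathbf{1}_{E(C)}\rangle=1$ over all triangles and holes $C$ is consistent. By the Fredholm alternative over $\mathbb{F}_2$, inconsistency is equivalent to the existence of an \emph{odd dependency}: a family $\mathcal{F}=\{C_1,\dots,C_m\}$ of distinct triangles and holes with $m$ odd and $\sum_{i=1}^{m}\mathbf{1}_{E(C_i)}=0$ (every edge lies on an even number of the $C_i$). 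The three forbidden configurations are exactly the basic witnesses: for a theta, $C_{12}+C_{13}+C_{23}=0$ with $C_{ij}=P_i\cup P_j$; for a prism, the three ``side'' holes together with the two triangles sum to $0$; and for an even $2k$-wheel, the $2k$ sector cycles together with $H$ sum to $0$.

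It then remains to show that a $($even wheel, theta, prism$)$-free graph has no odd dependency, and here the plan is a minimal-counterexample argument: take $\mathcal{F}=\{C_1,\dots,C_m\}$ minimising first $m$ and then $\bigl|\bigcup_i E(C_i)\bigr|$, and analyse how the cycles overlap. One gets $m\ne 1$ (a cycle has nonzero indicator), and for $m=3$ one has $C_3=C_1\triangle C_2$; since holes and triangles are chordless, minimality forces $C_1\cap C_2$ to be a single path and the three cycles to form an induced theta, the degenerate cases being ruled out because otherwise some chordless cycle acquires a chord or a strictly smaller dependency appears. For $m\ge 5$ a more delicate analysis of how successive cycles in a minimal dependency share paths forces a prism or an even wheel. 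The hard part will be exactly this structural case analysis: showing that the union of the cycles of a minimal odd dependency induces one of the three configurations \emph{with no extra chords in $G$}. Controlling chords is the crux, since each potential chord must either contradict the chordlessness of some $C_i$ or be used to exhibit a shorter dependency, contradicting minimality. (An alternative route, closer to the setting of \cite{Conforti1999EvenGraphs} and to Truemper's theorem on signings, recasts odd-signability as a balancedness property of an associated $0,1$ matrix; the combinatorial core is the same.)
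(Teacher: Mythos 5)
There is a genuine gap, and it is worth being clear about its nature: the paper does not prove this statement at all — it is imported wholesale from \cite{Conforti1999EvenGraphs} (it is in essence an application of Truemper's theorem on signings of graphs), so your proposal is being measured against the actual proof in the literature, which is a substantial structural argument. Your ``only if'' direction is complete and correct: odd-signability is hereditary, and your three parity computations (summing the hole conditions over the pairs of paths of a theta, over the pairs of paths plus the two triangles of a prism, and over the sectors of an even wheel) are exactly the standard certificates that these configurations are not odd-signable. The $\mathbb{F}_2$ reformulation of the ``if'' direction is also sound: odd-signability fails iff there is an odd-size family of triangles/holes whose edge-indicator vectors sum to zero.

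The missing content is the entire second half: showing that an (even wheel, theta, prism)-free graph admits no odd dependency. You explicitly flag ``the structural case analysis'' as the hard part, but that case analysis \emph{is} the theorem; without it the proposal is an outline, not a proof. Even the $m=3$ case as you state it does not go through as written: from $C_3=C_1\triangle C_2$ you cannot conclude that $C_1\cap C_2$ is a single path and that $C_1\cup C_2\cup C_3$ is an \emph{induced} theta, because $G$ may contain edges between the three paths that lie on none of the $C_i$ (they contradict the chordlessness of no $C_i$), and disposing of them is precisely where wheels and prisms, not just thetas, enter; your claim that any such chord ``yields a shorter dependency'' is asserted, not shown. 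For $m\ge 5$ there is no argument at all. The correct route — either Truemper's theorem on when prescribed parities of chordless cycles are realizable by a signing (whose proof is a nontrivial induction/decomposition), or the direct argument of \cite{Conforti1999EvenGraphs} — is only gestured at in your closing parenthesis. So the proposal should be regarded as a correct proof of necessity plus a plausible but unexecuted plan for sufficiency, rather than a proof of the cited theorem.
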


A \emph{cutset} $C \subseteq V(G)$ of $G$ is a set of vertices such that $G \setminus C$ is disconnected. A {\em star cutset} in a graph $G$ is a cutset $S\subseteq V(G)$ such that  either $S=\emptyset$ or for some $x\in S$, $S\subseteq N[x]$. A \emph{clique} is a set $K \subseteq V(G)$ such that every pair of vertices in $K$ are adjacent. A \emph{clique cutset} is a cutset $C \subseteq V(G)$ such that $C$ is a clique.

 \section{Balanced separators and laminar collections}
 \label{sec:laminar}

 { The goal of this section is to develop the notion of a ``central bag'' for
 a laminar collection of separations, and to study the properties of the
 central bag.   The main result  is Lemma~\ref{lemma:biglemma},
 that connects the existence of a balanced separator in the whole  graph 
with the existence of one  in the central bag of a laminar collection of separations.}  {Note that in a later paper by the authors and their coauthors \cite{wallpaper}, a simpler way to define central bags is given.}
  
For the remainder of the paper, unless otherwise specified, we assume that if $G$ is a graph, then $w:V(G) \to [0, 1]$ is a weight function of $G$ with $w(G) = 1$, and $w^{\max} = \max_{v \in V(G)} w(v)$. A \emph{separation} of a graph $G$ is a triple of disjoint vertex sets $(A, C, B)$ such that $A \cup C \cup B = V(G)$ and $A$ is anticomplete to $B$. A separation $(A, C, B)$ is \emph{proper} if $A$ and $B$ are nonempty. A set $X \subseteq V(G)$ is a \emph{clique star} if there exists a nonempty clique $K$ in $G$ such that $K \subseteq X \subseteq N[K]$. The clique $K$ is called the \emph{center} of $X$. A separation $S = (A, C, B)$ is a \emph{star separation} if $C$ is a clique star, and the \emph{center} of $S$ is the center of $C$.  For $\varepsilon \in [0, 1]$, a separation $S = (A, C, B)$ is \emph{$\varepsilon$-skewed} if $w(A) < \varepsilon$ or $w(B) < \varepsilon$. For the remainder of the paper, if $S = (A, C, B)$ is $\varepsilon$-skewed, we assume that $w(A) < \varepsilon$. Let $S_1 = (A_1, C_1, B_1)$ and $S_2 = (A_2, C_2, B_2)$ be two separations. For $i = 1, 2$, let $X_i = A_i \cup C_i$ and $Y_i = C_i \cup B_i$. We say $S_1$ and $S_2$ are {\em non-crossing} if for some $i \in \{1, 2\}$, either $X_i \subseteq X_{3-1}$ and $Y_{3-i} \subseteq Y_i$, or $X_i \subseteq Y_{3-i}$ and $X_{3-i} \subseteq Y_i$. If $S_1$ and $S_2$ are not non-crossing, then $S_1$ and $S_2$ {\em cross}.

Let $\C$ be a collection of separations of $G$. The collection $\C$ is \emph{laminar} if the separations of $\C$ are pairwise non-crossing. The \emph{separation dimension} of $\C$, denoted $\dim(\C)$, is the minimum number of laminar collections of separations with union $\C$. 

Let $G$ be a graph and let $(T, \chi)$ be a tree decomposition of $G$. Suppose that $e = t_1t_2$ is an edge of $T$ and let $T_1$ and $T_2$ be the connected components of $T \setminus e$, where for $i = 1, 2$, $t_i$ is a vertex of $T_i$. Up to symmetry between $t_1$ and $t_2$, the separation of $G$ corresponding to $e$, denoted $S_e$, is defined as follows: $S_e = (D^{t_1}_e, C_e, D^{t_2}_e)$, where $C_e = \chi(t_1) \cap \chi(t_2)$, $D^{t_1}_e = \left(\bigcup_{t \in T_1 } \chi(t)\right) \setminus C_e$, and $D^{t_2}_e = \left(\bigcup_{t \in T_2} \chi(t)\right) \setminus C_e$. The following lemma shows that given a laminar collection of separations $\C$ of $G$, there exists a tree decomposition $(T_\C, \chi_\C)$ of $G$ such that there is a bijection between $\C$ and the separations corresponding to edges of $(T_\C, \chi_\C)$.

\begin{lemma}[\cite{Robertson1991GraphTree-decomposition}]
\label{lemma:noncrossing-separations-td}
Let $G$ be a graph and let $\mathcal{C}$ be a laminar collection of separations of $G$. Then there is a tree decomposition $(T_{\mathcal{C}}, \chi_{\mathcal{C}})$ of $G$ such that 
\begin{enumerate}[(i)]
\item for all $S \in \mathcal{C}$, there exists $e \in E(T_{\mathcal{C}})$ such that $S = S_e${ , and}

\item for all $e \in E(T_{\mathcal{C}})$, $S_e \in \mathcal{C}${ .}

\end{enumerate}
\end{lemma}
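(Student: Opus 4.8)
The plan is to build the tree decomposition by induction on $|\mathcal{C}|$, using the laminarity of $\mathcal{C}$ to peel off one ``extreme'' separation at a time. First I would note that if $\mathcal{C}=\emptyset$ the statement is trivial: take $T_\mathcal{C}$ to be a single vertex $t$ with $\chi_\mathcal{C}(t)=V(G)$. For the inductive step, I would like to pick a separation $S=(A,C,B)\in\mathcal{C}$ that is \emph{minimal} in the following sense: among all separations in $\mathcal{C}$, choose one for which $A\cup C$ is inclusion-minimal (after possibly swapping the roles of $A$ and $B$ for each separation). The key point, which follows from the non-crossing hypothesis, is that every other $S'=(A',C',B')\in\mathcal{C}$ can be oriented so that $A\cup C\subseteq A'\cup C'$, or else $A\cup C$ and $A'\cup C'$ are ``on opposite sides'', i.e.\ $A\cup C\subseteq C'\cup B'$ with $A'\cup C'\subseteq C\cup B$ — but minimality of $A\cup C$ rules out the genuinely nested-inside-$A'$ case, so in fact every $S'$ lives entirely ``above'' the cut $C$ in the sense that $A'\cup C'$ and $A$ are disjoint, hence $A'\cup C'\subseteq C\cup B$ (possibly after swapping). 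This is the step I expect to be the main obstacle: extracting from the combinatorial definition of non-crossing the clean statement that a minimal separation can be ``cut off'' without disturbing any of the others.

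Granting that, I would proceed as follows. Let $G' = G[C\cup B]$ and let $\mathcal{C}' = \mathcal{C}\setminus\{S\}$, viewed as a collection of separations of $G'$ (each $S'=(A',C',B')\in\mathcal{C}'$ restricts to $(A',C',B'\cap(C\cup B))$ or to $(A'\cap(C\cup B),C',B')$ depending on orientation; by the previous paragraph one of $A',B'$ is contained in $C\cup B$, so this restriction is a genuine separation of $G'$ with the same cutset $C'$). One checks $\mathcal{C}'$ is still laminar in $G'$. By induction, there is a tree decomposition $(T',\chi')$ of $G'$ whose edges correspond bijectively to $\mathcal{C}'$. Now I extend it: since $C$ is the cutset of $S$ and $C\subseteq V(G')$, the set $\{t\in V(T')\mid C\subseteq\chi'(t)\}$ — I would need a short argument that some bag contains all of $C$, which should follow because $C$ is ``below'' every separation of $\mathcal{C}'$ and in a laminar tree decomposition the region below everything is a single bag, or at worst by taking the bag of the appropriate leaf — pick such a node $t^\ast$, add a new leaf $t_A$ adjacent to $t^\ast$, and set $\chi(t_A) = A\cup C$ and $\chi(t)=\chi'(t)$ for $t\in V(T')$.

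Finally I would verify the three tree-decomposition axioms and the two bijection conditions. Axiom~(i): every $v\in A$ now appears in $\chi(t_A)$, and every other vertex already appeared in $(T',\chi')$. Axiom~(ii): every edge of $G$ is either inside $G'$ (covered by induction) or has an endpoint in $A$, and since $A$ is anticomplete to $B$ both endpoints lie in $A\cup C=\chi(t_A)$. Axiom~(iii): for $v\in A$ the set of bags containing $v$ is exactly $\{t_A\}$, connected; for $v\in C$ it is the old connected set together with $t_A$, and $t_A$ is adjacent to $t^\ast$ which contains $C$, so it stays connected; for $v\in B\setminus C$ nothing changed. For the bijection: the new edge $t_At^\ast$ has $C_{t_At^\ast}=\chi(t_A)\cap\chi(t^\ast)=C$ (using $A\cup C\subseteq$ one side and that $A$ meets no other bag), and the two sides of $T\setminus t_At^\ast$ give exactly $A$ and $B$, so $S_{t_At^\ast}=S$; and by the inductive hypothesis the edges of $T'$ still correspond to $\mathcal{C}'$, because deleting the pendant vertex $t_A$ changes neither the cutset nor the two sides of any edge of $T'$ (the vertices of $A$ only ever sit in the leaf $t_A$). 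This completes the induction.
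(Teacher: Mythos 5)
A preliminary remark: the paper does not prove this lemma at all (it is cited from Robertson and Seymour), so your argument has to stand on its own. Your inductive strategy --- peel off a separation $S=(A,C,B)$ whose closed side $A\cup C$ is inclusion-minimal, apply induction to $G'=G[C\cup B]$, then reattach a leaf with bag $A\cup C$ --- is a reasonable route, but the step you yourself flag as the main obstacle is in fact false as stated. Minimality of $A\cup C$ does not yield that every other $S'=(A',C',B')\in\mathcal{C}$ has a closed side (including its cutset) inside $C\cup B$. The paper's non-crossing definition also allows $A'\cup C'\subseteq A\cup C$ together with $C\cup B\subseteq C'\cup B'$; minimality only upgrades the first containment to equality, and then $C'$ may reach into $A$. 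Concretely, take $G$ to be the path $a$--$c$--$b$ and $\mathcal{C}=\{(\{a\},\{c\},\{b\}),\ (\emptyset,\{a,c\},\{b\})\}$: these are non-crossing, $\{a,c\}$ is an inclusion-minimal side, and if you peel off $S=(\{a\},\{c\},\{b\})$ then the other separation has neither side contained in $\{c,b\}$ and its cutset $\{a,c\}$ is not even a subset of $V(G')$, so your restriction step is undefined. (The lemma of course still holds here --- one must peel off $(\emptyset,\{a,c\},\{b\})$ first --- so the cure is a finer selection rule, e.g.\ among separations attaining the minimal closed side take one with inclusion-maximal cutset; but your rule does not do this, and your phrase ``minimality rules out the nested-inside case'' silently ignores the equality case.)

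The reattachment step has a second genuine gap. Adding the leaf $t_A$ with bag $A\cup C$ does change the edge-separations of $T'$: for every old edge, the side whose component contains $t^\ast$ gains exactly $A$. So your closing claim that the two sides of the old edges are unchanged is wrong --- and in fact the change is exactly what you need, since condition (ii) is about separations of $G$, not of $G'$. What must be proved is that $t^\ast$ can be chosen so that, for every $S'\in\mathcal{C}'$, it lies on the side of the corresponding edge from which $A$ was deleted in the restriction; demanding only $C\subseteq\chi'(t^\ast)$ is not sufficient. For instance, let $G$ be the star with center $c$ and leaves $a,b_1,b_2$ and $\mathcal{C}=\{(\{a\},\{c\},\{b_1,b_2\}),\ (\{b_1\},\{c\},\{a,b_2\})\}$; peeling off the first separation, induction may return $T'$ with bags $\{b_1,c\}$ and $\{c,b_2\}$, both containing $C=\{c\}$, and attaching $t_A$ at $\{b_1,c\}$ turns the old edge into the separation $(\{a,b_1\},\{c\},\{b_2\})\notin\mathcal{C}$, so both (i) and (ii) fail. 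Relatedly, the existence of a bag of $(T',\chi')$ containing all of $C$ is only waved at; it is provable (once the corrected key claim gives that $C$ avoids one open side of every separation in $\mathcal{C}'$, a Helly-type argument on the subtrees $\{t: c\in\chi'(t)\}$ works), but both this and the correct-side requirement need real arguments, so the proposal requires repair rather than polish.
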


We call $(T_\C, \chi_\C)$ a \emph{tree decomposition corresponding to $\C$}. Suppose $\C$ is a laminar collection of $\varepsilon$-skewed separations of $G$, and let $(T_\C, \chi_\C)$ be a tree decomposition corresponding to $\C$. For $e \in E(T_\C)$, $S_e = (A_e, C_e, B_e)$, where $w(A_e) < \varepsilon$. We define the directed tree $T'_\C$ to be the orientation of $T_\C$ given by directing edge $e = t_1t_2$ of $T_\C$ from $t_1$ to $t_2$ if $A_e = D_e^{t_1}$ (so $e = { (t_1,t_2)}$ in $T'_\C$), and from $t_2$ to $t_1$ if $A_e = D_e^{t_2}$ (so $e = { (t_2,t_1)}$ in $T'_\C)$. If $w(A_e) < \varepsilon$ and $w(B_e) < \varepsilon$, then edge $e$ is directed arbitrarily. 

A \emph{sink} of a directed graph $G$ is a vertex $v$ such that each edge incident with $v$ is oriented toward $v$. Every directed tree has at least one sink. A directed tree $T$ is an \emph{in-arborescence} if there exists a root $v \in V(T)$ such that for every $u \in V(T)$, there is exactly one directed path from $u$ to $v$ in $T$. The following lemma shows that when $\C$ is a laminar collection of $\varepsilon$-skewed separations {  satisfying an additional property}, $T'_\C$ is an in-arborescence.
 
 \begin{lemma}
 Let $\varepsilon, \varepsilon_0 > 0$ be such that $\varepsilon + \varepsilon_0 < \frac{1}{2}$. Let $G$ be a graph and let $\C$ be a laminar collection of $\varepsilon$-skewed separations of $G$ such that $w(C) \leq \varepsilon_0$ for all $(A, C, B)$ in $\C$. Let $(T_\C, \chi_\C)$ be a tree decomposition corresponding to $\C$.  Then, the directed tree $T'_\C$ is an in-arborescence. 
 \label{lemma:in-arborescence}
 \end{lemma}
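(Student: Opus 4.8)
The plan is to show that $T'_\C$ has a unique sink and that every vertex has a directed path to it; since $T'_\C$ is a tree, this is equivalent to being an in-arborescence. The key obstruction to rule out is a vertex of $T'_\C$ with two distinct outgoing edges that ``disagree,'' i.e., two edges $e=(t,t_1)$ and $f=(t,t_2)$ incident to a common vertex $t$ and both oriented away from $t$; if no such configuration exists, then following the orientation from any vertex never branches, so it terminates at a single sink, giving the in-arborescence structure. (One must also handle the arbitrarily-oriented edges, those $e$ with $w(A_e)<\varepsilon$ and $w(B_e)<\varepsilon$; I will argue such edges cannot exist under the hypothesis, or handle them separately.)

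First I would set up notation: for an edge $e=(t_1,t_2)$ of $T'_\C$ (oriented $t_1\to t_2$), recall $A_e=D_e^{t_1}$ is the ``small side,'' with $w(A_e)<\varepsilon$, and by Lemma~\ref{lemma:noncrossing-separations-td} the side $D_e^{t_2}$ of $T_\C\setminus e$ contains everything ``behind'' $t_2$. The crucial quantitative point is: if $e=(t,t_1)$ and $f=(t,t_2)$ are two edges out of $t$, then the two small sides $A_e$ and $A_f$ are disjoint. This is because $A_e\subseteq D_e^{t_1}$ and $A_f\subseteq D_f^{t_2}$, and $D_e^{t_1}$ and $D_f^{t_2}$ lie in different components of $T_\C$ once we delete $t$ from the picture — more precisely, $D_e^{t_1}\setminus C_e$ and $D_f^{t_2}\setminus C_f$ are contained in the vertex sets of disjoint subtrees, hence the only possible overlap is inside $\chi(t)$, which is contained in both $C_e\cup D_e^{t}$ and $C_f\cup D_f^{t}$; a short argument using property (iii) of tree decompositions and the definition of $C_e,C_f$ pins down that $A_e\cap A_f=\emptyset$. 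I would then push this to: if $t$ has outgoing edges $e_1,\dots,e_m$, the sets $A_{e_1},\dots,A_{e_m}$ are pairwise disjoint subsets of $V(G)$, and moreover $V(G)\setminus(A_{e_1}\cup\cdots\cup A_{e_m})$ is ``small'' — it is contained in $\bigcup_i(C_{e_i}\cup(\text{stuff}))$ — so that $w\big(\bigcup_i A_{e_i}\big) > 1 - \big(\text{bound involving }\varepsilon_0 \text{ and the }C_{e_i}\big)$.

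The main obstacle, and the heart of the argument, is the counting: I want to conclude $m\le 1$. If $m\ge 2$, then $w(A_{e_1})+w(A_{e_2})<2\varepsilon$, but I also need a matching upper bound on the complement. The cleanest route: take the component $T_t$ of $T_\C$ obtained by deleting all edges out of $t$ together with... actually, more simply, consider the "core" bag $\chi(t)$ and the region it controls; everything not in some $A_{e_i}$ must be reachable from $t$ without crossing an outgoing edge, so it lies on the $t$-side of each $e_i$, i.e., in $\bigcup_i (C_{e_i}\cup B_{e_i})$-type sets — and a telescoping/laminarity argument bounds its weight by $\sum_i w(C_{e_i}) \le$ (number of relevant edges)$\cdot\varepsilon_0$. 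Combined with $\sum_i w(A_{e_i}) \le 1$, if $m\ge 2$ we would get $1 = w(V(G)) \le \sum_i w(A_{e_i}) + (\text{complement}) $ forcing the complement to have weight $> 1-2\varepsilon > \varepsilon_0$... I need to be careful to get the inequality to bite; the hypothesis $\varepsilon+\varepsilon_0<\tfrac12$ is exactly what makes ``two small sides plus a small cutset can't cover everything, yet they must'' into a contradiction. Concretely: with $m=2$, $w(A_{e_1}\cup A_{e_2})<2\varepsilon<1-2\varepsilon_0<1-\varepsilon_0$, yet the rest has weight $\le\varepsilon_0$ (as it sits inside $C_{e_1}\cup C_{e_2}$ after intersecting appropriately, using disjointness of the $B$-sides), so $w(V(G))<1$, contradiction. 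Hence $m\le1$: every vertex has at most one outgoing edge.

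Finally I would wrap up: a finite tree in which every vertex has out-degree at most $1$ has exactly one sink $r$ (at least one sink exists since it is a finite directed tree; if there were two sinks the path between them in $T_\C$ would have an internal vertex of out-degree $\ge 2$ or an endpoint... more carefully, orient the path between two sinks and find a vertex with two outgoing edges), and from every vertex $u$ following the unique outgoing edges yields the unique directed path from $u$ to $r$. For the arbitrarily-oriented edges $e$ (both sides small), note $w(A_e)<\varepsilon$ and $w(B_e)<\varepsilon$ gives $w(V(G))=w(A_e)+w(C_e)+w(B_e)<2\varepsilon+\varepsilon_0<1$, a contradiction — so no such edge exists and the orientation of $T'_\C$ is fully determined. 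This shows $T'_\C$ is an in-arborescence with root $r$, completing the proof. I expect the disjointness-and-covering bookkeeping for a vertex of high out-degree to be the step requiring the most care, since one must correctly identify the vertex set that the $A_{e_i}$'s fail to cover and bound its weight by the cutset weights.
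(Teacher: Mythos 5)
Your overall frame is the same as the paper's: an in-arborescence is exactly an orientation of a tree with no vertex having two outgoing edges, and the paper's induction from a sink is just a way of exhibiting such a vertex if the conclusion fails. The problem is in how you derive the contradiction from a vertex $t$ with two outgoing edges $e=(t,t_1)$, $f=(t,t_2)$. By the paper's convention (which you state correctly at first), an edge oriented $t\to t_1$ has its light side at the \emph{tail}: $A_e=D_e^{t}$, the side containing $t$. But in your key step you switch to $A_e\subseteq D_e^{t_1}$ and $A_f\subseteq D_f^{t_2}$, i.e.\ you treat the light sides as the two far subtrees. Under that reading your disjointness claim holds but the rest of the counting fails: the complement of $A_e\cup A_f$ is \emph{not} contained in $C_e\cup C_f$ --- it contains $\chi(t)$ (minus the cutsets) and every other branch at $t$. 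Indeed no contradiction can be extracted from that configuration: take a star-shaped $T_\C$ whose central bag carries almost all the weight; every edge is light on the leaf side, the correctly oriented tree is a fine in-arborescence into the center, yet in your reversed reading the center would have large out-degree with tiny far subtrees and tiny cutsets, and the heavy central bag is exactly the uncovered ``rest.'' So the step you yourself flag as delicate is the one that breaks.

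The repair is to use the correct identification, after which the argument becomes the paper's short computation and no disjointness is needed: since both edges leave $t$, $w(D_e^{t})<\varepsilon$ and $w(D_f^{t})<\varepsilon$; the subtree of $T_\C\setminus e$ at $t_1$ and the subtree of $T_\C\setminus f$ at $t_2$ are disjoint, so every bag of $T_\C$ lies in the $t$-side of $e$ or of $f$, hence $V(G)\subseteq D_e^{t}\cup C_e\cup D_f^{t}\cup C_f$ and $1=w(G)<2\varepsilon+2\varepsilon_0<1$, a contradiction. (Your side remark that an edge with both sides of weight less than $\varepsilon$ cannot exist is correct but not needed; the paper simply orients such edges arbitrarily and the computation above is insensitive to that choice. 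Also note that the bound you would get for the ``rest'' should in any case be $2\varepsilon_0$, not $\varepsilon_0$, though this does not affect the final inequality.)
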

 \begin{proof}
 Let $x \in V(T'_\C)$ be a sink of $T'_\C$. We prove by induction on the distance from $x$ in $T_\C$ that for every vertex $u \in V(T'_\C)$, the path from $u$ to $x$ in $T_\C$ is a directed path from $u$ to $x$ in $T'_\C$. Since $x$ is a sink, the base case follows immediately. Suppose that there is a directed path from $v$ to $x$ in $T'_\C$ for all vertices $v$ of distance $i$ from $x$, and consider vertex $u$ of distance $i+1$ from $x$. Let $P = u \dd v \dd v' \dd \hdots \dd x$ be the path from $u$ to $x$ in $T_\C$. By induction, the path $v \dd v' \dd \hdots \dd x$ is a directed path from $v$ to $x$ in $T'_\C$. Suppose that ${ (v,u)} \in E(T'_\C)$. Let $T_1$ be the component of $T'_\C \setminus { (v,u)}$ containing $v$, and let $T_2$ be the component of $T'_\C \setminus { (v,v')}$ containing $v$. Because {  $S_{vu}$ and $S_{vv'}$} are $\varepsilon$-skewed separations of $G$, we have that 
 \begin{equation} w\left(\left(\bigcup_{t \in T_1} \chi_\C(t)\right) \setminus \left(\chi_\C(v) \cap \chi_\C(u)\right)\right) < \varepsilon 
 \label{eqn:skewed_tree_1}
 \end{equation}
 and
 \begin{equation} w\left(\left(\bigcup_{t \in T_2} \chi_\C(t)\right) \setminus \left(\chi_\C(v) \cap \chi_\C(v')\right)\right) < \varepsilon. 
 \label{eqn:skewed_tree_2}
 \end{equation}
 
 Together, \eqref{eqn:skewed_tree_1} and \eqref{eqn:skewed_tree_2} imply that $w(G) < 2\varepsilon + 2\varepsilon_0 < 1$, a contradiction. Therefore, the directed tree $T'_\C$ is an in-arborescence. 
 \end{proof}

 \begin{lemma}
 \label{lemma:seps_are_skewed}
 Let $c \in [\frac{1}{2}, 1)$ and let $d$ be a positive integer. Let $G$ be a graph, let $w:V(G) \to [0, 1]$ be a weight function on $G$ with $w(G) = 1$, and suppose $G$ has no $(w, c, d)$-balanced separator. Let $S = (A, C, B)$ be a separation of $G$ such that $C$ is $d$-bounded. Then, $S$ is $(1-c)$-skewed. 
 \end{lemma}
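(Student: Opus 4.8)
The plan is to argue by contradiction. Suppose $S = (A,C,B)$ is \emph{not} $(1-c)$-skewed; then $w(A) \geq 1-c$ and $w(B) \geq 1-c$. I will show that under this assumption $C$ itself is a $(w,c,d)$-balanced separator of $G$, which contradicts the hypothesis that $G$ has no such separator.

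By hypothesis $C$ is $d$-bounded, so it only remains to check that $w(Z) \leq c$ for every connected component $Z$ of $G \setminus C = G[A \cup B]$. The key (and essentially only) point is that, since $(A,C,B)$ is a separation, $A$ is anticomplete to $B$; hence there is no edge of $G$ with one end in $A$ and one end in $B$, and therefore every connected component $Z$ of $G[A\cup B]$ satisfies either $Z \subseteq A$ or $Z \subseteq B$.

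Now if $Z \subseteq A$, then using $w(A)+w(B)+w(C) = w(G) = 1$ we get
\[
w(Z) \leq w(A) = 1 - w(B) - w(C) \leq 1 - w(B) \leq 1 - (1-c) = c,
\]
and symmetrically $w(Z) \leq w(B) \leq 1 - w(A) \leq c$ when $Z \subseteq B$. So every component of $G \setminus C$ has weight at most $c$, and since $C$ is $d$-bounded, $C$ is a $(w,c,d)$-balanced separator of $G$ — contradicting the assumption. Hence $S$ is $(1-c)$-skewed.

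The argument is short and I expect no real obstacle: the only things to be careful about are (i) invoking $A$ anticomplete to $B$ to conclude that components of $G\setminus C$ do not straddle the separation, and (ii) the elementary weight bookkeeping using $w(G)=1$. Everything else follows directly from the definitions of a separation and of a $(w,c,d)$-balanced separator.
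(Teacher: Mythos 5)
Your proof is correct and is essentially the paper's argument in contrapositive form: both hinge on the facts that components of $G\setminus C$ lie entirely in $A$ or in $B$ and that $w(A)+w(B)\leq 1$, so that $C$ not being a $(w,c,d)$-balanced separator forces one side to have weight less than $1-c$. No gap here.
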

 \begin{proof}
 Since $C$ is $d$-bounded and $G$ has no $(w, c, d)$-balanced separator, we may assume $w(B) > c$. Since $1 = w(G) \geq w(A) + w(B)$ and $w(B) > c$, it follows that $w(A) < 1 - c$, and so $S$ is $(1-c)$-skewed. 
 \end{proof}
 
 Let $G$ be a graph with maximum degree $\delta$. Note that $\delta + \delta^2$ is an upper bound for the maximum size of a clique star in $G$. Let $\beta \subseteq V(G)$. For a laminar collection $X$ of $\varepsilon$-skewed {  star separations} of $G$, $\beta$ is \emph{perpendicular} to $X$ if $\beta \cap A = \emptyset$ for all $(A, C, B) \in X$.

 \begin{lemma}
   Let $\delta$ be a positive integer, let  $ c\in [\frac{1}{2},1)$, and let $m \in [0, 1]$, with {  $(1-c)+ m(\delta + \delta^2) < \frac{1}{2}$.} Let $G$ be a connected graph with maximum degree $\delta$ and  {let $w:V(G) \to [0, 1]$ be a weight function on $G$ with $w(G) =1$ and $w^{\max} \leq m$}. Let $X$ be a laminar collection of {  $(1-c)$-skewed} star separations of $G$. Let $(T_X, \chi_X)$ be a tree decomposition corresponding to $X$.  {  (Note that since $(1-c) + w^{\max}(\delta + \delta^2) < \frac{1}{2}$, it follows from Lemma \ref{lemma:in-arborescence} that $T'_X$ is an in-arborescence.)}  Let $v$ be the root of $T'_X$ and let $\beta = \chi_X(v)$. Then $\beta$ is connected and perpendicular to $X$.
 \label{lemma:central_bag}
 \end{lemma}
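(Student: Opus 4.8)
The plan is to work directly with a tree decomposition $(T_X,\chi_X)$ corresponding to $X$ (Lemma~\ref{lemma:noncrossing-separations-td}) and its orientation $T'_X$, which is an in-arborescence with root $v$ by the note in the statement. By Lemma~\ref{lemma:noncrossing-separations-td} every $(A,C,B)\in X$ equals $S_e=(A_e,C_e,B_e)$ for a unique edge $e$ of $T_X$, with $w(A_e)<1-c$; since $T'_X$ is rooted at $v$, each edge $e=t_1t_2$, oriented say from $t_1$ to $t_2$ so that $A_e=D^{t_1}_e$, has $v$ in the component $T_2$ of $T_X\setminus e$ containing $t_2$. For \emph{perpendicularity} I would argue: if $u\in\beta\cap A_e$ then $u\in\chi_X(v)$ and $u\in\chi_X(t')$ for some $t'$ in the component $T_1\ni t_1$; the $T_X$-path from $v$ to $t'$ crosses $e$, so by condition (iii) the subtree $\{t:u\in\chi_X(t)\}$ contains both $t_1$ and $t_2$, forcing $u\in\chi_X(t_1)\cap\chi_X(t_2)=C_e$, which contradicts $u\in A_e=D^{t_1}_e$. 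Hence $\beta\cap A_e=\emptyset$ for every edge $e$, i.e.\ $\beta$ is perpendicular to $X$.

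For connectedness, the structural fact I would isolate first is the following. Let $e_1=vt_1,\dots,e_k=vt_k$ be the edges of $T_X$ incident with $v$ and, for each $i$, let $A_{e_i}=D^{t_i}_{e_i}$ be the side of $S_{e_i}$ not containing $v$ (this is the correct side by the orientation). Then $V(G)\setminus\beta=A_{e_1}\cup\dots\cup A_{e_k}$, the $A_{e_i}$ are pairwise anticomplete, and $C_{e_i}=\chi_X(t_i)\cap\chi_X(v)\subseteq\beta$. All of this is a short check with the tree-decomposition axioms: for $u\notin\chi_X(v)$, the connected set $\{t:u\in\chi_X(t)\}$ avoids $v$, hence lies in a single component $T^{(i)}$ of $T_X\setminus v$, which places $u$ in $A_{e_i}$ and in no other $A_{e_j}$; and an edge $uu'$ with $u\in A_{e_i}$, $u'\in A_{e_j}$, $i\ne j$ would, by condition (ii), put $u$ and $u'$ in a common bag, contradicting that their bag-supports sit in different components of $T_X\setminus v$.

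Finally I would reroute. Take $x,y\in\beta$; as $G$ is connected, pick a path $P$ from $x$ to $y$ in $G$, and let $Q$ be any maximal subpath of $P$ with $V(Q)\cap\beta=\emptyset$. Consecutive vertices of $Q$ are adjacent and $V(Q)\subseteq\bigcup_i A_{e_i}$, so pairwise anticompleteness gives $V(Q)\subseteq A_{e_i}$ for a single $i$; the vertices $c$ and $c'$ of $P$ immediately before and after $Q$ lie in $\beta$ (by maximality of $Q$, and $x,y\in\beta$) and are adjacent to $A_{e_i}$, hence $c,c'\in N(A_{e_i})\subseteq A_{e_i}\cup C_{e_i}$, and since $c,c'\notin A_{e_i}$ we get $c,c'\in C_{e_i}$. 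As $S_{e_i}$ is a star separation, $C_{e_i}$ is a clique star with centre a nonempty clique $K_{e_i}$ satisfying $K_{e_i}\subseteq C_{e_i}\subseteq N[K_{e_i}]$; since $c,c'\in N[K_{e_i}]$, there is a walk of length at most $3$ from $c$ to $c'$ using only vertices of $K_{e_i}\cup\{c,c'\}\subseteq C_{e_i}\subseteq\beta$. Replacing each such $Q$ in $P$ by the corresponding walk produces a walk from $x$ to $y$ lying entirely in $\beta$, so $G[\beta]$ is connected. (If $T_X$ has no edge then $\beta=V(G)$; otherwise $\beta\ne\emptyset$, since $\beta=\emptyset$ would make $V(G)$ equal to one of the anticomplete sets $A_{e_i}$, contradicting $1=w(G)>w(A_{e_i})$.)

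I expect the only real work to be in the second paragraph: reading off from the in-arborescence orientation which side of each $S_e$ the root lies on, and verifying the anticomplete decomposition of $V(G)\setminus\beta$ together with the fact that $c,c'\in C_{e_i}$ --- this is where conditions (i)--(iii) of tree decompositions and the precise definition of $S_e$ are all used. The clique-star rerouting in the third paragraph, and the perpendicularity argument in the first, are then straightforward.
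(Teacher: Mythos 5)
Your proposal is correct and follows essentially the same route as the paper's proof: identify $V(G)\setminus\beta$ as the union of the pairwise anticomplete sets $A_{e_i}$ over the edges of $T_X$ incident with the root, note that $N(A_{e_i})\cap\beta\subseteq C_{e_i}$ with $C_{e_i}$ a clique star contained in $\beta$ (the paper phrases the rerouting as "each component of $G\setminus\beta$ attaches to a single component of $\beta$" rather than splicing walks through $K_{e_i}$, which is the same idea), and use the in-arborescence orientation to place the root on the $B$-side of every separation for perpendicularity. Your extra verifications of the standard tree-decomposition facts are fine and only make explicit what the paper leaves implicit.
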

 \begin{proof}
Suppose $(A, C, B) \in X$. Then, $C$ is a clique star, so $|C| \leq \delta + \delta^2$ and $w(C) \leq w^{\max} (\delta + \delta^2)$.  First, we show that $\beta$ is connected. Let $e_1, \hdots, e_m$ be the edges of $T_X$ incident with $v$ and let $S_{e_1}, \hdots, S_{e_m}$ be the corresponding separations, where $S_{e_i} = (A_{e_i}, C_{e_i}, B_{e_i})$ and {  $w(A_{e_i}) < 1-c$.} Then, $V(G) \setminus \beta = \bigcup_{i = 1}^m A_{e_i}$. 
{  Since $A_{e_1},\ldots ,A_{e_m}$ are pairwise disjoint and anticomplete,}
for every connected component $D$ of $G \setminus \beta$ there exists $1 \leq i \leq m$ such that $D \subseteq A_{e_i}$. Since $N(A_{e_i}) \cap \beta \subseteq C_{e_i}$ and {  $C_{e_i} \subseteq N[K_{e_i}]$ for some clique $K_{e_i} \subseteq C_{e_i}$}, it follows that the neighborhood in $\beta$ of every connected component of $G \setminus \beta$ is {  contained in a unique connected component of $\beta$}. Therefore, since $G$ is connected, $\beta$ is connected.

Now we show that $\beta$ is perpendicular to $X$. Let $(A, C, B) \in X$, let $e = t_1t_2$ be the edge of $T_X$ such that $S_e = (A, C, B)$, and let $T_1$ and $T_2$ be the components of $T_X \setminus e$ containing $t_1$ and $t_2$, respectively. Up to symmetry between $T_1$ and $T_2$, assume that 
{  $A = (\cup_{t\in T_1} \chi_X(t)) \setminus \chi_X(t_2)$}. Then, $e = (t_1,t_2)$ in $T'_X$. Since $v$ is the root of $T'_X$, it follows that $v \in V(T_2)$, and thus 
{  $\beta \subseteq \cup_{t\in T_2} \chi_X(t)$}. Therefore, $\beta \cap A = \emptyset$, so $\beta$ is perpendicular to $X$.  
 \end{proof}

 Let $G$ be a connected graph with maximum degree $\delta$ and let $X$ be a laminar collection of $\varepsilon$-skewed star separations of $G$, where $\varepsilon + w^{\max} (\delta + \delta^2) < \frac{1}{2}$. Let $(T_X, \chi_X)$ be a tree decomposition corresponding to $X$. Let ${  v} \in V(T_X)$  and $\beta = \chi_X({  v})$ be as in Lemma \ref{lemma:central_bag}; then $\beta$  is connected and perpendicular to $X$. We call $\beta$ the \emph{central bag} for $T_X$.  Let $e_1, \hdots, e_m$ be the edges of $T_X$ incident with ${  v}$ where $e_i = v_i{  v}$, and let $S_{e_1}, \hdots, S_{e_m}$ be the corresponding separations of $G$, where $S_{e_i} = (A_{e_i}, C_{e_i}, B_{e_i})$. Since $C_{e_i} = \chi_X({  v}) \cap \chi_X(v_i)$, it follows that $C_{e_i} \subseteq \chi_X({  v}) = \beta$ for every $i \in \{1, \hdots, m\}$. 
 
 For every $C_{e_i}$, let $K_{e_i}$ be {  a} center of $C_{e_i}$. We let $v_{e_i} \in K_{e_i}$ chosen arbitrarily be the \emph{anchor} of $C_{e_i}$. For $v \in V(G)$, let $I_v \subseteq \{1, \hdots, m\}$ be the set of indices $i$ such that $v$ is the anchor of $C_{e_i}$. Then, the \emph{weight function $w_X$ on $\beta$ with respect to $T_X$} is a function $w_X: \beta \to [0, 1]$ such that $w_X(v) = w(v) + \sum_{i \in I_v} w(A_{e_i})$ for all $v \in \beta$. 
 
 \begin{lemma}
 Let $\delta$ be a positive integer and let $\varepsilon, m \in [0, 1]$, with $\varepsilon + m (\delta + \delta^2) < \frac{1}{2}$. Let $G$ be a connected graph with maximum degree $\delta$ and  {let $w:V(G) \to [0, 1]$ be a weight function on $G$ with $w(G) = 1$ and $w^{\max} \leq m$}. Let $X$ be a laminar collection of $\varepsilon$-skewed star separations of $G$. Let $(T_X, \chi_X)$ be a tree decomposition corresponding to $X$, let $\beta$ be the central bag for $T_X$, and let $w_X$ be the weight function on $\beta$ with respect to $T_X$. Then, $w_X(\beta) = w(G) = 1$. Furthermore, if every clique $K$ of $G$ is the center of at most one star separation in $X$, then $w_X^{\max} \leq w^{\max} + 2^\delta \varepsilon$. 
 \label{lemma:weight_max}
 \end{lemma}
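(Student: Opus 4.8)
The plan is to analyze the two claims separately. For the first claim, $w_X(\beta) = 1$, I would argue by double counting. Every vertex $u \in V(G)$ contributes its weight $w(u)$ to exactly one term in the sum $\sum_{v \in \beta} w_X(v)$: if $u \in \beta$, it contributes via the term $w(u)$ in $w_X(u)$; if $u \notin \beta$, then since $V(G) \setminus \beta = \bigcup_{i=1}^m A_{e_i}$ and these sets are pairwise disjoint (they are pairwise anticomplete and each $A_{e_i}$ is the ``$A$-side'' of a separation directed toward $v$), $u$ lies in exactly one $A_{e_i}$, and $w(u)$ is counted exactly once, namely inside the term $w(A_{e_i})$ which appears in $w_X(v_{e_i})$ where $v_{e_i} \in \beta$ is the anchor of $C_{e_i}$. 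Conversely, each term $w(A_{e_i})$ in the definition of $w_X$ is added to exactly one $w_X(v)$, namely $v = v_{e_i}$. Hence $\sum_{v \in \beta} w_X(v) = \sum_{u \in \beta} w(u) + \sum_{i=1}^m w(A_{e_i}) = \sum_{u \in V(G)} w(u) = w(G) = 1$. One subtlety to check: the anchors $v_{e_i}$ genuinely lie in $\beta$, which follows from $C_{e_i} \subseteq \beta$ (established just before the lemma) and $v_{e_i} \in K_{e_i} \subseteq C_{e_i}$.

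For the second claim, fix $v \in \beta$ and bound $w_X(v) = w(v) + \sum_{i \in I_v} w(A_{e_i})$. We have $w(v) \leq w^{\max}$, so it suffices to show $\sum_{i \in I_v} w(A_{e_i}) \leq 2^\delta \varepsilon$. Since each $S_{e_i}$ is $\varepsilon$-skewed we have $w(A_{e_i}) < \varepsilon$ for each $i$, so it is enough to show $|I_v| \leq 2^\delta$, i.e.\ that $v$ is the anchor of at most $2^\delta$ of the sets $C_{e_1}, \dots, C_{e_m}$. Here is where the ``at most one star separation per clique'' hypothesis enters: if $v = v_{e_i}$, then $v \in K_{e_i}$ and $K_{e_i}$ is the center of the star separation $S_{e_i} = (A_{e_i}, C_{e_i}, B_{e_i})$; distinct $i \in I_v$ with the same center $K$ would force two star separations in $X$ with center $K$, contradicting the hypothesis, so distinct $i \in I_v$ must have distinct centers $K_{e_i}$. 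Each such center is a clique containing $v$, hence a subset of $N[v]$, which has size at most $\delta + 1$. The number of cliques in $G$ containing $v$ is therefore at most the number of subsets of $N(v)$, which is at most $2^\delta$. Thus $|I_v| \leq 2^\delta$, giving $w_X(v) \leq w^{\max} + 2^\delta \varepsilon$, and taking the maximum over $v \in \beta$ yields $w_X^{\max} \leq w^{\max} + 2^\delta \varepsilon$.

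I expect the first claim to be essentially bookkeeping — the only real care needed is confirming that $\{A_{e_i}\}$ partitions $V(G) \setminus \beta$ (which uses that $v$ is the root of the in-arborescence $T'_X$, so every edge at $v$ points toward $v$ and its ``$A$-side'' is the away-from-$v$ side) and that each anchor lies in $\beta$. The main obstacle, such as it is, is the counting argument for the second claim: one must correctly identify that the relevant quantity is the number of \emph{cliques through $v$ that serve as centers}, use the injectivity ``center $\mapsto$ separation'' supplied by the hypothesis to bound $|I_v|$ by that number, and then bound that number crudely by $2^{|N(v)|} \le 2^\delta$. The bound $2^\delta$ is not tight (one could say $2^{\delta} $ counts all subsets of $N(v)$, not just cliques, and also the empty clique $\{v\}$ is allowed), but it suffices and matches the statement.
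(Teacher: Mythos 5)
Your proposal is correct and follows essentially the same argument as the paper: the first claim is the same bookkeeping identity $w_X(\beta) = \sum_{v \in \beta} w(v) + \sum_{i=1}^m w(A_{e_i}) = w(G)$, and the second claim uses exactly the paper's reasoning that $v$ lies in at most $2^\delta$ cliques, so the injectivity of center $\mapsto$ separation gives $|I_v| \leq 2^\delta$ and hence $w_X(v) \leq w^{\max} + 2^\delta\varepsilon$. Your added care about the $A_{e_i}$ partitioning $V(G)\setminus\beta$ and the anchors lying in $\beta$ is consistent with what the paper establishes just before the lemma.
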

\begin{proof}

 By the definition of $w_X$, we have $w_X(\beta) = \sum_{v \in \beta} w_X(v) = \sum_{v \in V(G) \setminus \bigcup_{i = 1}^m A_{e_i}} w(v) + \sum_{i = 1}^m w(A_{e_i}) = w(G) = 1$. 
 
 Suppose every clique $K$ of $G$ is the center of at most one star separation in $X$. Because the maximum degree of $G$ is $\delta$, every vertex $v \in V(G)$ is in at most $2^\delta$ cliques of $G$. It follows that every vertex $v \in V(G)$ is the anchor of at most $2^\delta$ separations of $X$, so $|I_v| \leq 2^\delta$. Since $X$ is a collection of $\varepsilon$-skewed separations, $w(A_{e_i}) < \varepsilon$ for all $i \in I_v$. Therefore, $w_X^{\max} \leq w^{\max} + 2^\delta \varepsilon$. 
 \end{proof}

 The following lemma shows that if $G$ does not have a $(w, c, d)$-balanced separator and $X$ is a laminar collection of star separations of $G$, then the central bag for $T_X$ does not have a $(w_X, c, d - 2)$-balanced separator.   

 \begin{lemma}
 \label{lemma:biglemma}
Let $\delta, d$ be positive integers with $d > 2$, let $c \in [\frac{1}{2}, 1)$, and let $m \in [0, 1]$, with $(1-c) + m(\delta + \delta^2) < \frac{1}{2}$. Let $G$ be a connected graph with maximum degree $\delta$,  {let $w:V(G) \to [0, 1]$ be a weight function on $G$ with $w(G) = 1$ and $w^{\max} \leq m$}, and suppose that $G$ does not have a $(w, c, d)$-balanced separator. {  Let $X$ be a laminar collection of star separations of $G$. }
%and assume that for all $(A, C, B) \in X$, $C \neq \emptyset$. 
Then, the central bag $\beta$ for $X$ exists ({  in particular,} $\beta$ is perpendicular to $X$), $w_X(\beta) = 1$, and $\beta$ does not have a $(w_X, c, d - 2)$-balanced separator. 
 \end{lemma}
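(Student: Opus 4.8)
The plan is to extract the three assertions about $\beta$ from the machinery already developed, and then to prove the remaining one --- that $\beta$ has no $(w_X,c,d-2)$-balanced separator --- by contraposition, lifting a balanced separator of $\beta$ back to one of $G$. For the assertions about $\beta$: each $(A,C,B)\in X$ is a star separation, so $C$ is a clique star with center a nonempty clique $K$; for any $k\in K$ we have $K\subseteq N_G[k]$, hence $C\subseteq N_G[K]\subseteq N_G^2[k]\subseteq N_G^d[k]$ since $d>2$, so $C$ is $d$-bounded. Since $G$ has no $(w,c,d)$-balanced separator, Lemma~\ref{lemma:seps_are_skewed} gives that $(A,C,B)$ is $(1-c)$-skewed. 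Thus $X$ is a laminar collection of $(1-c)$-skewed star separations, and as $(1-c)+m(\delta+\delta^2)<\frac{1}{2}$, Lemma~\ref{lemma:central_bag} shows that the central bag $\beta$ exists, is connected, and is perpendicular to $X$, while the first conclusion of Lemma~\ref{lemma:weight_max} gives $w_X(\beta)=w(G)=1$. (We do not use, and do not need, the hypothesis that every clique is the center of at most one separation of $X$.)

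Now suppose for a contradiction that $\beta$ has a $(w_X,c,d-2)$-balanced separator $Y$, witnessed by $u_1,\dots,u_{d'}\in V(\beta)$ with $d'\le d-2$ and $Y\subseteq N_\beta^{d-2}[u_1]\cup\dots\cup N_\beta^{d-2}[u_{d'}]$. I adopt the notation from the definition of $w_X$: let $e_1,\dots,e_m$ be the edges of $T_X$ at the root, with corresponding clique stars $C_{e_i}\subseteq\beta$, centers $K_{e_i}$, anchors $v_{e_i}\in K_{e_i}$, and away-sides $A_{e_i}$, so that $V(G)\setminus\beta=\bigcup_i A_{e_i}$ (a disjoint union), the $A_{e_i}$ are pairwise anticomplete, $N_G(A_{e_i})\cap\beta\subseteq C_{e_i}$, each $S_{e_i}\in X$ and hence $w(A_{e_i})<1-c\le c$, and $w_X(v)=w(v)+\sum_{i:\,v_{e_i}=v}w(A_{e_i})$. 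I would then set
\[
\mathcal I=\{\,i:K_{e_i}\cap Y\neq\emptyset\,\},\qquad Y'=Y\cup\bigcup_{i\in\mathcal I}C_{e_i},
\]
and show that $Y'$ is a $(w,c,d)$-balanced separator of $G$, contradicting the hypothesis.

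The $d$-boundedness of $Y'$ is the easy half: for $i\in\mathcal I$ choose $k_i\in K_{e_i}\cap Y$; since $K_{e_i}\subseteq N_G[k_i]$ we get $C_{e_i}\subseteq N_G^2[k_i]$, and as $k_i\in Y\subseteq\bigcup_l N_\beta^{d-2}[u_l]\subseteq\bigcup_l N_G^{d-2}[u_l]$ we obtain $C_{e_i}\subseteq N_G^d[u_l]$ for some $l$; together with $Y\subseteq\bigcup_l N_G^{d-2}[u_l]$ this gives $Y'\subseteq\bigcup_{l=1}^{d'}N_G^d[u_l]$ with $d'\le d$. The heart of the argument is bounding the weight of a component $D$ of $G\setminus Y'$, and the key observation is that if $K_{e_i}\cap Y=\emptyset$ then $G[C_{e_i}\setminus Y]$ is connected (it contains the connected set $K_{e_i}$, and every other vertex of $C_{e_i}$ is adjacent to $K_{e_i}$), so $C_{e_i}\setminus Y$ lies in a single component of $\beta\setminus Y$; hence only the branches $A_{e_i}$ with $i\in\mathcal I$ --- for which $C_{e_i}$ is entirely deleted in $Y'$, so $A_{e_i}$ has no neighbour in $\beta\setminus Y'$ --- can join two distinct components of $\beta\setminus Y$. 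Tracing a path inside $D$ then yields: either $D\subseteq A_{e_i}$ for a single $i$, and $w(D)\le w(A_{e_i})<c$; or $D\cap\beta$ lies in a single component $Z$ of $\beta\setminus Y$, every $A_{e_i}$ meeting $D$ has $i\notin\mathcal I$, and (since $C_{e_i}\setminus Y$ is connected and contains both $v_{e_i}$ and the $\beta$-attachment point of $A_{e_i}$, which lies in $D\cap\beta\subseteq Z$) $v_{e_i}\in Z$, so $D\subseteq Z\cup\bigcup_{i:\,v_{e_i}\in Z}A_{e_i}$ and $w(D)\le w(Z)+\sum_{i:\,v_{e_i}\in Z}w(A_{e_i})=w_X(Z)\le c$.

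The step I expect to require the most care is verifying that a component $D$ of $G\setminus Y'$ cannot meet two components of $\beta\setminus Y$: one must argue that any $D$-path joining them has to dip into some branch $A_{e_i}$, entering and leaving it through $C_{e_i}\setminus Y'$, and rule this out ($C_{e_i}\setminus Y'=\emptyset$ when $i\in\mathcal I$, and $C_{e_i}\setminus Y$ is connected --- hence a single component of $\beta\setminus Y$ --- when $i\notin\mathcal I$, so the entry and exit points coincide in their component). The whole argument turns on deleting $C_{e_i}$ precisely when $K_{e_i}$ meets $Y$: a vertex of $K_{e_i}$ in $Y$ forces all of $C_{e_i}$ within distance $2$ of $Y$, which is exactly the slack between $d-2$ and $d$, while simultaneously this is precisely the situation in which $A_{e_i}$ could bridge two components of $\beta\setminus Y$.
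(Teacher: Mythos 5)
Your proof is correct, and it follows the paper's overall strategy: derive $(1-c)$-skewedness from Lemma~\ref{lemma:seps_are_skewed}, get existence, connectivity and perpendicularity of $\beta$ from Lemma~\ref{lemma:central_bag}, get $w_X(\beta)=1$ from Lemma~\ref{lemma:weight_max}, and then argue by contradiction by lifting a hypothetical $(w_X,c,d-2)$-balanced separator $Y$ of $\beta$ to a $(w,c,d)$-balanced separator of $G$, using the distance-$2$ slack coming from the clique-star structure of the cutsets. The one genuine difference is the lifted separator itself: the paper uses $N_\beta^{2}[Y]$, while you use the smaller set $Y\cup\bigcup\{C_{e_i}: K_{e_i}\cap Y\neq\emptyset\}$ (which is contained in $N_\beta^{2}[Y]$, since each such $C_{e_i}\subseteq N_\beta^{2}[k_i]$ for $k_i\in K_{e_i}\cap Y$). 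Accordingly your verification is organized differently: the paper defines the pieces $Z_j=(Q_j\setminus N^2_\beta[Y])\cup A_j$ and the branches with anchor in $Y$, and proves two anticompleteness claims; you instead observe that $C_{e_i}\setminus Y$ is connected whenever $K_{e_i}$ misses $Y$, so a component of $G\setminus Y'$ cannot hop between two components of $\beta\setminus Y$ through a branch $A_{e_i}$, and then charge each component's weight either to a single $w(A_{e_i})<1-c\le c$ or to $w_X(Z)\le c$ for a single component $Z$ of $\beta\setminus Y$. Both arguments are sound and exploit the same mechanism (a hit center forces the whole clique star within distance $2$, and such clique stars are exactly the potential bridges); yours deletes a slightly smaller set and compensates with the connectivity/path argument, while the paper's larger set makes the anticompleteness checks entirely local. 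Your use of the lemmas is accurate, including the observation that the ``at most one separation per center'' hypothesis of Lemma~\ref{lemma:weight_max} is not needed for the conclusion $w_X(\beta)=1$.
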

 \begin{proof}
   Since  $X$ is a collection of star separations, it follows that $C$ is 2-bounded for every $(A, C, B) \in X$. {  Since} $G$ does not have a $(w, c, 2)$-balanced separator Lemma \ref{lemma:seps_are_skewed} implies that  every separation in $X$ is $(1-c)$-skewed. Let $(T_X, \chi_X)$ be a tree decomposition corresponding to $X$. Then, by Lemma \ref{lemma:central_bag}, the central bag $\beta$ for $X$ exists, and by Lemma \ref{lemma:weight_max}, $w_X(\beta) = 1$. 

Suppose that $Y$ is a $(w_X, c, d-2)$-balanced separator of $\beta$. We claim that $N_\beta^{2}[Y]$ is a $(w, c, d)$-balanced separator of $G$. Since $Y$ is $(d-2)$-bounded, it follows that $N_\beta^{2}[Y]$ is $d$-bounded. Let $Q_1, \hdots, Q_\ell$ be the components of $\beta \setminus Y$. Let $t \in V(T_X)$ be such that $\beta = \chi_X(t)$. Let $e_1, \hdots, e_m$ be the edges of $T_{X}$ incident with $t$, let $S_{e_1}, \hdots, S_{e_m}$ be the corresponding separations, where $S_{e_i} = (A_{e_i}, C_{e_i}, B_{e_i})$ and $w(A_{e_i}) < 1 - c$, and let $c_{e_i}$ be the anchor of $C_{e_i}$ for $i = 1, \hdots, m$. Then, $V(G) \setminus \beta = \bigcup_{i=1}^m A_{e_i}$ and $A_{e_i}$ is anticomplete to $A_{e_j}$ for $i \neq j$. For $v \in V(G)$, let $I_v \subseteq \{1, \hdots, m\}$ be the set of all $i$ such that $v$ is the anchor of $C_{e_i}$. For $i = 1, \hdots, \ell$, let $A_i = \bigcup_{v \in Q_i} \left(\bigcup_{j \in I_v} A_{e_j}\right)$, let $Q_i' = (Q_i \setminus N_\beta^{2}[Y])$, and let $Z_i = Q_i' \cup A_i$. \\

\noindent \emph{(1) $Z_i$ is anticomplete to $Z_j$ for $i \neq j$.}

Suppose there is an edge $e$ from $Z_i$ to $Z_j$. Since $Q_i'$ is anticomplete to $Q_j'$ and $A_i$ is anticomplete to $A_j$, we may assume that $e$ is from $A_{e_{i'}}$ to $Q_j'$, where $A_{e_{i'}} \subseteq A_i$. Since $N(A_{e_{i'}}) \cap \beta \subseteq C_{e_{i'}}$, it follows that $C_{e_{i'}} \cap Q_j' \neq \emptyset$. Let $v \in C_{e_{i'}} \cap Q_j'$ and let $P$ be a shortest path from $c_{e_{i'}}$ to $v$ through $\beta$. Since $c_{e_{i'}}, v \in C_{e_{i'}}$ and $C_{e_{i'}}$ is a clique star, it follows that $P$ is of length at most 2. Since $c_{e_{i'}} \in Q_i$ and $v \in Q_j$, it follows that $P$ goes through $Y$ and thus $P$ is of length exactly 2. 
Let $P = c_{e_{i'}}\dd {  y} \dd v$, where ${  y} \in Y$. Then, $v \in N_\beta^{2}[{  y}] \subseteq N_\beta^{2}[Y]$, a contradiction 
{  (since $v\in Q_j'$)}. This proves (1). \\

\noindent \emph{(2) If $c_{e_i} \in Y$, then $A_{e_i}$ is anticomplete to $Z_j$ for $j \in \{1, \hdots, \ell\}$.}

Suppose $c_{e_i} \in Y$. Then, $C_{e_i} \subseteq N_\beta^{2}[Y]$. Since $N(A_{e_i}) \cap \beta \subseteq C_{e_i}$, it follows that $A_{e_i}$ is anticomplete to $Q_j'$ for all $j = 1, \hdots, \ell$. Therefore, $A_{e_i}$ is anticomplete to $Z_j$ for all $j = 1, \hdots, \ell$. This proves (2). \\

Let $I_Y \subseteq \{1, \hdots, m\}$ be the set of all $i$ such that $c_{e_i} \in Y$. Then, $V(G) \setminus N_\beta^{2}[Y] = \left(\bigcup_{i \in I_Y} A_{e_i}\right) \cup \left(\bigcup_{j=1}^\ell Z_j\right)$. Suppose $Z$ is a component of $V(G) \setminus N_\beta^{2}[Y]$. It follows from (1) and (2) that either $Z \subseteq A_{e_i}$ for some $i \in I_Y$, or $Z \subseteq Z_j$ for some $j \in \{1, \hdots, \ell\}$. Since $w_X(Q_i) \leq c$, it follows that $w(Z_i) \leq c$ for all $i = 1, \hdots, \ell$.  Further, since every separation in $X$ is $(1-c)$-skewed and $c \in [\frac{1}{2}, 1)$, it follows that $w(A_{e_i}) < (1-c) \leq c$ for all $i \in I_Y$. Therefore, $w(Z) \leq c$, and $N_\beta^{2}[Y]$ is a $(w, c, d)$-balanced separator of $G$, a contradiction.  
 \end{proof}

 \section{Balanced separators and clique separations}
 \label{sec:clique}

In this section, we show that if $G$ is a connected graph with no balanced separator, then there exists a connected induced subgraph of $G$ with no balanced separator and no clique cutset. The central bag from Lemma \ref{lemma:biglemma} is the primary tool for finding such an induced subgraph. 

A separation $(A, C, B)$ of a graph $G$ is a \emph{clique separation} if $C$ is a clique. A clique cutset $C$ is \emph{minimal} if every $c \in C$ has a neighbor in every component of $G \setminus C$. Note that in a connected graph $G$, $|C| \geq 1$ for every minimal clique cutset $C$ of $G$. 
\begin{lemma} 
 Let $G$ be a connected graph and let $\C$ be a collection of clique separations of $G$ such that $C$ is a minimal clique cutset for all $(A, C, B) \in \C$ and for every two distinct separations $(A_1, C_1, B_1)$, $(A_2, C_2, B_2) \in \C$, $C_1 \neq C_2$. Then, $\dim(\C) = 1$. In particular, $\C$ is laminar.
 \label{lemma:clique_cutsets_laminar}
\end{lemma}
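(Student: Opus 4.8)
The plan is to reduce to the following claim: any two distinct separations $S_1=(A_1,C_1,B_1)$ and $S_2=(A_2,C_2,B_2)$ of $\mathcal{C}$ are non-crossing. Granting this, $\mathcal{C}$ is laminar by definition, and so $\dim(\mathcal{C})=1$. So fix distinct $S_1,S_2\in\mathcal{C}$, so that $C_1\neq C_2$, and let $X_i=A_i\cup C_i$ and $Y_i=C_i\cup B_i$ as usual.

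The first step, which I expect to be the main obstacle, is to show that $C_1$ and $C_2$ are incomparable, so that $C_1\setminus C_2$ and $C_2\setminus C_1$ are nonempty (and, being cliques, connected). Suppose not, say $C_1\subsetneq C_2$. Since $C_2$ is a clique, $C_2\setminus C_1$ is nonempty, connected, and disjoint from $C_1$, so it lies in a single component $D_0$ of $G\setminus C_1$; and since $C_1$ is a cutset, $G\setminus C_1$ has another component $D\neq D_0$. As $C_1\subseteq C_2$, the graph $G\setminus C_2$ is an induced subgraph of $G\setminus C_1$, so every component of $G\setminus C_2$ is contained in some component of $G\setminus C_1$. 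Now $D$ is disjoint from $C_2$ (it is disjoint from $C_1$ and from $D_0\supseteq C_2\setminus C_1$), so the component of $G\setminus C_2$ containing $D$ lies in some component of $G\setminus C_1$, which must be $D$ itself; hence $D$ is a component of $G\setminus C_2$. But any $c\in C_2\setminus C_1$ lies in the component $D_0\neq D$ of $G\setminus C_1$, so $c$ has no neighbour in $D$, contradicting the minimality of $C_2$. By symmetry $C_2\not\subseteq C_1$ as well.

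Next, since $C_1\setminus C_2$ is nonempty, connected, and disjoint from $C_2$, it is contained in a single component of $G\setminus C_2$, hence lies entirely in $A_2$ or entirely in $B_2$; symmetrically, $C_2\setminus C_1$ lies entirely in $A_1$ or entirely in $B_1$. Since the non-crossing relation is unaffected by interchanging the two sides of either separation, we may assume $C_1\setminus C_2\subseteq A_2$ and $C_2\setminus C_1\subseteq B_1$; in particular $C_1\cap B_2=\emptyset$ and $C_2\cap A_1=\emptyset$, so $C_1\subseteq X_2$ and $C_2\subseteq Y_1$. It now suffices to prove $A_1\subseteq A_2$ and $B_2\subseteq B_1$, for then $X_1=A_1\cup C_1\subseteq X_2$ and $Y_2=C_2\cup B_2\subseteq Y_1$, witnessing that $S_1$ and $S_2$ are non-crossing.

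To see $A_1\subseteq A_2$: let $D$ be a component of $G\setminus C_1$ with $D\subseteq A_1$. Then $D$ is disjoint from $C_2$ (as $C_2\cap A_1=\emptyset$), so $D\subseteq G\setminus C_2$, and being connected it lies in $A_2$ or in $B_2$. If $D\subseteq B_2$, pick $c\in C_1\setminus C_2$; then $c\in A_2$, but by minimality of $C_1$ the vertex $c$ has a neighbour in $D\subseteq B_2$, contradicting that $A_2$ is anticomplete to $B_2$. So $D\subseteq A_2$, and taking the union over all such components gives $A_1\subseteq A_2$. The inclusion $B_2\subseteq B_1$ is obtained in the same way, now using that a component $D$ of $G\setminus C_2$ with $D\subseteq B_2$ is disjoint from $C_1$ (since $C_1\cap B_2=\emptyset$) and applying minimality of $C_2$ to a vertex of $C_2\setminus C_1\subseteq B_1$. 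The only genuinely delicate point in the argument is the incomparability of $C_1$ and $C_2$, where one must combine the refinement of the component structure under passing to a larger deletion set with the definition of a minimal clique cutset; everything else is a routine double application of the connectivity of cliques together with minimality.
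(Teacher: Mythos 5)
Your proof is correct, and it runs on the same two engines as the paper's: a clique, being connected, must lie on one side of the other separation, and a vertex of a minimal clique cutset must have a neighbour in every component of the complement, which clashes with anticompleteness. The organisation differs, though. The paper's proof is by contradiction and uses minimality only once: after orienting so that $C_1\cap A_2=\emptyset$ and $C_2\cap A_1=\emptyset$, it notes that $C_1\neq C_2$ forces, say, $C_1\cap B_2\neq\emptyset$, and that if $A_1\cap A_2\neq\emptyset$ then some component of $A_1$ lies in $A_2$ and is anticomplete to a vertex of $C_1\cap B_2$, contradicting minimality of $C_1$; hence $A_1\cap A_2=\emptyset$ and the separations are nested. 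You instead prove an extra intermediate fact the paper never needs -- that $C_1$ and $C_2$ are incomparable, via a neat component-refinement argument -- and then derive the explicit containments $A_1\subseteq A_2$ and $B_2\subseteq B_1$ directly, invoking minimality three times in total. Your route is slightly longer but more informative (it exhibits the nesting explicitly rather than excluding a crossing), and the incomparability of distinct minimal clique cutsets is a reusable observation. One shared caveat rather than a flaw: both you (``the non-crossing relation is unaffected by interchanging the two sides'') and the paper (``we may assume $C_1\cap A_2=\emptyset$'') silently treat non-crossing as invariant under swapping $A_i$ and $B_i$; this is true for the standard notion of nested separations, but the paper's written definition lists only three of the four inclusion patterns, omitting $Y_1\subseteq X_2$ and $Y_2\subseteq X_1$, so strictly speaking that symmetry is being read into the definition by both arguments alike.
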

\begin{proof}
Let $S_1 = (A_1, C_1, B_1)$ and $S_2 = (A_2, C_2, B_2)$ be clique separations of $G$ such that $C_1$ and $C_2$ are minimal clique cutsets of $G$. Since $C_1$ is a clique and $A_2$ is anticomplete to $B_2$, either $C_1 \cap A_2 = \emptyset$ or $C_1 \cap B_2 = \emptyset$. We may assume that $C_1 \cap A_2 = \emptyset$. Similarly, we may assume that $C_2 \cap A_1 = \emptyset$. If $A_1 \cap A_2 = \emptyset$, then $A_2 \subseteq B_1$ and $A_1 \subseteq B_2$, so $S_1$ and $S_2$ are non-crossing (since $A_2 \cup C_2 \subseteq B_1 \cup C_1$ and $A_1 \cup C_1 \subseteq B_2 \cup C_2)$. Therefore, we may assume that $A_1 \cap A_2 \neq \emptyset$. Since $C_1 \neq C_2$, either $C_1 \cap B_2 \neq \emptyset$ or $C_2 \cap B_1 \neq \emptyset$. Assume up to symmetry that $C_1 \cap B_2 \neq \emptyset$. Since $A_1 \subseteq A_2 \cup B_2$ and $A_2$ is anticomplete to $B_2$, every component of $A_1$ is either a subset of $A_2$ or a subset of $B_2$. 
{  Since $A_1\cap A_2\neq \emptyset$, there exists
a connected component $A$ of $A_1$ such that $A \subseteq A_2$. Let $c \in C_1 \cap B_2$.} Then, $c$ is anticomplete to $A$, contradicting that $C_1$ is a minimal clique cutset. It follows that $S_1$ and $S_2$ are non-crossing. Therefore, $\dim(\C) = 1$. 
\end{proof}

Let $G$ be a graph and let $C$ be a minimal clique cutset of $G$.
 The \emph{minimal clique separation $S$ for $C$} is defined as follows: $S = (A, C, B)$, where $B$ is a largest weight connected component of $G \setminus C$ and $A = V(G) \setminus (B \cup C)$. 
 \begin{lemma}
 Let $c \in [\frac{1}{2}, 1)$. Let $G$ be a graph, let $w:V(G) \to [0, 1]$ be a weight function on $G$ with $w(G) = 1$, and suppose $G$ has no $(w, c, 1)$-balanced separator. Let $C$ be a minimal clique cutset of $G$. Then, the minimal clique separation $S$ for $C$ is unique and $S$ is $(1-c)$-skewed. 
 \label{lemma:clique_skewed}
 \end{lemma}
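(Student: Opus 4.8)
The plan is to exploit the fact that any clique is $1$-bounded: for any $x \in C$ we have $C \subseteq N[x] = N^1[x]$, and the empty clique is $1$-bounded as well (taking no witness vertices at all). Hence the hypothesis that $G$ has no $(w, c, 1)$-balanced separator applies directly to $C$ itself. First I would observe that, since $C$ is a clique cutset, $G \setminus C$ is disconnected, say with components $B_1, \dots, B_r$ ($r \geq 2$) labelled so that $w(B_1) = \max_i w(B_i)$. If we had $w(B_i) \leq c$ for every $i$, then $C$ would be a $(w, c, 1)$-balanced separator of $G$, contrary to hypothesis; therefore $w(B_1) > c$.

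Next I would establish uniqueness of this heavy component by weight counting. Since $w(G) = 1$ and $C, B_1, \dots, B_r$ partition $V(G)$, for every $i \geq 2$ we get $w(B_i) \leq w(G) - w(B_1) = 1 - w(B_1) < 1 - c$. As $c \in [\tfrac12, 1)$ we have $1 - c \leq \tfrac12 \leq c < w(B_1)$, so $B_1$ is the unique component of $G \setminus C$ of maximum weight. Consequently the choice of ``largest weight connected component of $G \setminus C$'' in the definition of the minimal clique separation for $C$ is forced, and $S = (A, C, B)$ with $B = B_1$ and $A = V(G) \setminus (B \cup C) = \bigcup_{i \geq 2} B_i$ is uniquely determined.

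Finally, for skewness I would simply compute $w(A) = w(G) - w(B) - w(C) \leq 1 - w(B_1) < 1 - c$, which says precisely that $S$ is $(1-c)$-skewed, and indeed with $w(A) < 1-c$, consistent with the paper's convention for labelling skewed separations. There is essentially no obstacle here: the argument is pure weight-counting once one notices that a clique is $1$-bounded. The only point requiring a moment's care is the degenerate case $C = \emptyset$ (which can arise when $G$ is disconnected); but $\emptyset$ is still $1$-bounded and is still a cutset in that situation, so the same argument produces a component of weight exceeding $c$ and everything above goes through verbatim.
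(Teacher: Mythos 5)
Your proposal is correct and follows essentially the same route as the paper: use that a clique (even the empty one) is $1$-bounded, so the no-$(w,c,1)$-balanced-separator hypothesis forces the heaviest component $B$ of $G\setminus C$ to satisfy $w(B)>c$, whence uniqueness follows from $c\geq\tfrac12$ and $w(G)=1$, and $w(A)<1-c$ follows by weight counting (the paper delegates this last step to Lemma~\ref{lemma:seps_are_skewed}, whose proof is exactly your computation).
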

 \begin{proof}
 Since $G$ has no $(w, c, 1)$-balanced separator, $C$ is not a $(w, c, 1)$-balanced separator. It follows that if $B$ is a largest weight connected component of $G \setminus C$, then $w(B)> c$. Since $c \in [\frac{1}{2}, 1)$ and $w(G) = 1$, the largest weight connected component of $G \setminus C$ is unique, and thus $S$ is unique. Since $C$ is a 1-bounded set and $G$ has no $(w, c, 1)$-balanced separator, it follows from Lemma \ref{lemma:seps_are_skewed} that $S$ is $(1-c)$-skewed. 
 \end{proof}

In the following lemma, we prove that if $k$ is the minimum size of a  clique cutset in $G$ and $\C$ is the collection of all minimal clique separations of $G$ 
{  for clique cutsets} of size $k$, then the central bag $\beta$ for $\C$ does not contain a clique cutset of size less than or equal to $k$. Note that a minimum size clique cutset is a minimal clique cutset. 

\begin{lemma}
  \label{lemma:clique_k}
  Let $\delta$ be a positive integer, let $k$ be a nonnegative integer, let $c \in [\frac{1}{2}, 1)$, and let $m \in [0, 1]$, with $(1-c) + m(\delta + \delta^2) < \frac{1}{2}$. Let $G$ be a connected graph with maximum degree $\delta$ and  {let $w:V(G) \to [0, 1]$ be a weight function on $G$ with $w(G) = 1$ and $w^{\max} \leq m$}. Suppose $G$ does not have a $(w, c, 1)$-balanced separator, and suppose the smallest clique cutset in $G$ has size $k$. Let $\C$ be the collection of all minimal clique separations of $G$ such that $|C| = k$ for every $(A, C, B) \in \C$. Then, $\C$ is laminar, and if $(T_\C, \chi_\C)$ is the tree decomposition of $G$ corresponding to $\C$ and $\beta$ is the central bag for $T_\C$, then $\beta$ does not have a clique cutset of size less than or equal to $k$.
\end{lemma}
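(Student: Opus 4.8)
The plan is to first show that $\C$ is laminar, then invoke the central-bag machinery of Section~\ref{sec:laminar}, and finally argue by contradiction that the central bag $\beta$ has no small clique cutset. First note that $k\ge 1$: since $G$ is connected the empty set is not a cutset, so every $(A,C,B)\in\C$ has $C$ a nonempty clique, hence $C$ is a clique star with center $C$ and every member of $\C$ is a star separation. Since $G$ has no $(w,c,1)$-balanced separator, Lemma~\ref{lemma:clique_skewed} applies to every minimal clique cutset of $G$ and gives both that the minimal clique separation for a fixed minimal clique cutset is \emph{unique} --- so distinct members of $\C$ have distinct cutsets $C$ --- and that every member of $\C$ is $(1-c)$-skewed. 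Hence Lemma~\ref{lemma:clique_cutsets_laminar} shows $\C$ is laminar. Since $w^{\max}\le m$ and $(1-c)+m(\delta+\delta^2)<\frac{1}{2}$, Lemma~\ref{lemma:central_bag} applies with $X=\C$: fixing a tree decomposition $(T_\C,\chi_\C)$ corresponding to $\C$, the central bag $\beta$ exists, is connected, and is perpendicular to $\C$.

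Now suppose for contradiction that $\beta$ has a clique cutset $K'$ with $|K'|\le k$, and let $Q_1,\dots,Q_\ell$ ($\ell\ge2$) be the components of $\beta\setminus K'$. Let $e_1,\dots,e_m$ be the edges of $T_\C$ at the root, so that $V(G)\setminus\beta=A_{e_1}\cup\dots\cup A_{e_m}$ with $S_{e_i}=(A_{e_i},C_{e_i},B_{e_i})\in\C$; recall the $A_{e_i}$ are pairwise anticomplete, $N(A_{e_i})\cap\beta\subseteq C_{e_i}\subseteq\beta$, and each $C_{e_i}$ is a clique with $|C_{e_i}|=k$. The crucial step is to show that $K'$ is in fact a clique cutset of $G$, with the $Q_j$ lying in $\ell\ge2$ distinct components of $G\setminus K'$. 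For each $i$: if $C_{e_i}\setminus K'\ne\emptyset$ then, being a clique of $\beta\setminus K'$, it lies in a single component $Q_{j(i)}$; otherwise $|C_{e_i}|=k$ forces $|K'|=k$ and $C_{e_i}=K'$, so $A_{e_i}$ is anticomplete to $\beta\setminus K'$. Consequently, any path of $G\setminus K'$ joining two of the $Q_j$ makes each of its excursions out of $\beta$ inside a single $A_{e_i}$ (the $A_{e_i}$ being pairwise anticomplete), entering and leaving $\beta$ through $C_{e_i}\setminus K'\subseteq Q_{j(i)}$, and hence never changes the index $j$ --- so no such path exists. Thus each $Q_j$ lies in its own component of $G\setminus K'$, and $K'$ is a clique cutset of $G$ of size at most $k$.

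To finish: if $|K'|<k$ this contradicts that the smallest clique cutset of $G$ has size $k$. If $|K'|=k$, I would invoke the elementary fact that every clique cutset contains an inclusion-minimal one, $K''$, which is automatically a minimal clique cutset in the sense of Section~\ref{sec:clique}: if some $c\in K''$ had no neighbor in a component $D$ of $G\setminus K''$, then $K''\setminus\{c\}$ would still separate $D$ from the remaining (pairwise anticomplete) components, contradicting inclusion-minimality of $K''$. By minimality of $k$ this minimal clique cutset has size exactly $k$, hence equals $K'$; so $K'$ is a minimal clique cutset of $G$ of size $k$, and the minimal clique separation $(A^*,K',B^*)$ for $K'$ lies in $\C$. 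Since $\beta$ is perpendicular to $\C$ we get $\beta\cap A^*=\emptyset$, hence $\beta\setminus K'\subseteq B^*$; but $B^*$ is a single connected component of $G\setminus K'$, contradicting that $Q_1,Q_2\subseteq\beta\setminus K'$ lie in distinct components of $G\setminus K'$.

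The main obstacle is the crucial step of the second paragraph --- showing that a clique cutset of the central bag lifts to a clique cutset of $G$ of the same size, i.e.\ that gluing the pieces $A_{e_i}$ back onto $\beta$ cannot reconnect the components of $\beta\setminus K'$. This is exactly where it matters that the separations in $\C$ are \emph{star} separations (so the $C_{e_i}$ are cliques and $C_{e_i}\setminus K'$ stays within one component of $\beta\setminus K'$), and it requires the bookkeeping of which component $Q_{j(i)}$ each $A_{e_i}$ attaches to.
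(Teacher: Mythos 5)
Your proof is correct and follows essentially the same route as the paper: laminarity via Lemmas \ref{lemma:clique_skewed} and \ref{lemma:clique_cutsets_laminar}, then lifting a clique cutset of the central bag $\beta$ to a clique cutset of $G$ (using that each $C_{e_i}$ is a clique, so its part outside the cutset stays in one component of $\beta\setminus K'$, or else $C_{e_i}=K'$), and finally a contradiction with perpendicularity of $\beta$ to $\C$. The differences are only in execution: the paper lifts a \emph{minimal} clique separation of $\beta$ by explicitly assigning each piece $D_{e_i}^{v_i}$ to one of the two sides rather than via your path/excursion argument, while your inclusion-minimality step makes explicit the verification (left implicit in the paper) that the lifted cutset is a minimal clique cutset of $G$, so that its minimal clique separation indeed belongs to $\C$.
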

\begin{proof}
Since $G$ is connected, $k \geq 1$. Since $G$ does not have a $(w, c, 1)$-balanced separator and $c \in [\frac{1}{2}, 1)$, it follows that every minimal clique cutset of size $k$ in $G$ corresponds to exactly one minimal clique separation in $\C$. Therefore, by Lemma \ref{lemma:clique_cutsets_laminar}, $\C$ is laminar, and by Lemma \ref{lemma:clique_skewed}, every separation in $\C$ is $(1-c)$-skewed. Let $v \in V(T_\C)$ be such that $\beta = \chi_\C(v)$ is the central bag for $T_\C$, and suppose $\beta$ has a clique cutset of size less than or equal to $k$. Let $(A_v, C_v, B_v)$ be a minimal clique separation of $\beta$ such that $|C_v| \leq k$. Let $v_1, \hdots, v_m$ be the vertices of $T_\C$ adjacent to $v$, let $e_i = vv_i$ be the edge from $v$ to $v_i$ for $i = 1, \hdots, m$, and let $S_{e_1}, \hdots, S_{e_m}$ be the clique separations corresponding to $e_1, \hdots, e_m$, where $S_{e_i} = (D_{e_i}^v, C_{e_i}, D_{e_i}^{v_i})$ as in Section \ref{sec:laminar}. Since $\beta \cap \chi_\C(v_i) = C_{e_i}$ and $C_{e_i}$ is a clique, it follows that $C_{e_i} \cap A_v = \emptyset$ or $C_{e_i} \cap B_v = \emptyset$ for all $i = 1, \hdots, m$. Let $A$ be the union of $A_v$ and all $D_{e_i}^{v_i}$ for $i$ such that $C_{e_i} \cap B_v = \emptyset$, and let $B$ be the union of $B_v$ and all $D_{e_i}^{v_i}$ for $i$ such that $D_{e_i}^{v_i} \not \subseteq A$. For $i \neq j$, $D_{e_i}^{v_i}$ and $D_{e_j}^{v_j}$ are disjoint and anticomplete to each other. By properties of the tree decomposition, $\beta \cup \bigcup_{i = 1}^m D_{e_i}^{v_i} = V(G)$. Therefore, it follows that $(A, C_v, B)$ is a clique separation of $G$ with $|C_v| \leq k$. 
 
 Since the smallest clique cutset in $G$ has size $k$, it follows that $|C_v| = k$. Let $S = (X, C_v, Y)$ be the minimal clique separation for $C_v$ in $G$. It follows that $S \in \C$, so by Lemma \ref{lemma:central_bag}, $\beta \subseteq C_v \cup Y$. But since $(A, C_v, B)$ is a clique separation of $G$, it follows that two components of $G \setminus C_v$ intersect $\beta$, a contradiction. 
\end{proof}

In the following theorem, we use Lemmas \ref{lemma:biglemma} and \ref{lemma:clique_k} to find an induced subgraph of $G$ that has no clique cutset and no balanced separator.  

\begin{theorem}
\label{thm:clique_free_bag}
{  Let $\delta, d$ be positive integers, %and $\delta'$ a nonnegative integer,
 with $d > 2\delta - 2$.} Let $c \in [\frac{1}{2}, 1)$ and let $m \in [0, 1]$, with $(1-c) + [m + (\delta-1) 2^{\delta}(1-c)](\delta + \delta^2) < \frac{1}{2}$. Let $G$ be a connected graph with maximum degree $\delta$,  {let $w:V(G) \to [0, 1]$ be a weight function on $G$ with $w(G) = 1$ and $w^{\max} \leq m$}, and suppose $G$ has no $(w, c, d)$-balanced separator. Then, there exists a sequence $(\alpha_0, w_0), (\alpha_1, w_1), \hdots, (\alpha_{\delta'}, w_{\delta'})$ such that $\delta' < \delta$, $(\alpha_0, w_0) = (G, w)$ and for $i \in \{0, \hdots, \delta'\}$, the following hold: 
\begin{itemize}
\itemsep -0.2em
    \item $\alpha_i$ is a connected induced subgraph of $G$ and $w_i$ is a weight function on $\alpha_i$ such that $w_i(\alpha_i) = 1$ and $w_i^{\max} \leq w^{\max} + i2^{\delta}(1-c)$. 
    
    \item $\alpha_i$ has no $(w_i, c, d-2i)$-balanced separator.
    
    \item If $i > 0$ then $\alpha_i$ is the central bag for a tree decomposition corresponding to a collection of minimal clique separations of $\alpha_{i-1}$.
    
    \item $\alpha_{\delta'}$ does not have a clique cutset.
\end{itemize}
\end{theorem}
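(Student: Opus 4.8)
The plan is to build the sequence greedily, using the central bag of Lemma~\ref{lemma:clique_k} to strictly increase the size of the smallest clique cutset at each step. Put $(\alpha_0, w_0) = (G, w)$. Assume $(\alpha_i, w_i)$ has been constructed and satisfies the first three bullet points; if $\alpha_i$ has no clique cutset, stop and set $\delta' = i$. Otherwise let $k_i \ge 1$ be the size of a smallest clique cutset of $\alpha_i$ and let $\C_i$ be the collection of all minimal clique separations $(A, C, B)$ of $\alpha_i$ with $|C| = k_i$. Since $k_i \ge 1$, each such $C$ is a nonempty clique, hence a clique star with center $C$, so $\C_i$ is a collection of star separations; and since $\alpha_i$ has no $(w_i, c, 1)$-balanced separator (it has none of the weaker form $(w_i, c, d-2i)$, and $d - 2i \ge 1$ as checked below), Lemma~\ref{lemma:clique_k} applies to $\alpha_i$ and shows that $\C_i$ is laminar and that its members have pairwise distinct centers. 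Fix a tree decomposition corresponding to $\C_i$, let $\alpha_{i+1}$ be its central bag, and let $w_{i+1}$ be the weight function on $\alpha_{i+1}$ with respect to that tree decomposition; the third bullet point for $\alpha_{i+1}$ is then immediate from the construction.

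Next I would verify that the arithmetic hypotheses of the lemmas hold uniformly along the iteration. While $i \le \delta - 1$, the invariant $w_i^{\max} \le w^{\max} + i\,2^\delta(1-c)$ gives $w_i^{\max} \le m + (\delta-1)2^\delta(1-c)$, so the theorem's hypothesis yields $(1-c) + w_i^{\max}(\delta+\delta^2) < \frac{1}{2}$. By Lemma~\ref{lemma:clique_skewed}, every separation in $\C_i$ is $(1-c)$-skewed. Lemma~\ref{lemma:clique_k} also gives that $\alpha_{i+1}$ has no clique cutset of size at most $k_i$; Lemma~\ref{lemma:biglemma}, applied to $\alpha_i$ with the star-separation collection $\C_i$ and with its parameter ``$d$'' taken equal to $d - 2i$ (which exceeds $2$, as checked below), gives that $\alpha_{i+1}$ is a connected induced subgraph of $\alpha_i$ (hence of $G$), that $w_{i+1}(\alpha_{i+1}) = 1$, and that $\alpha_{i+1}$ has no $(w_{i+1}, c, d-2(i+1))$-balanced separator; and Lemma~\ref{lemma:weight_max}, using that the members of $\C_i$ have distinct centers, gives $w_{i+1}^{\max} \le w_i^{\max} + 2^\delta(1-c) = w^{\max} + (i+1)2^\delta(1-c)$. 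So all three invariants pass to $(\alpha_{i+1}, w_{i+1})$, and the fourth bullet point holds for the sequence produced.

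It remains to bound the length of the sequence and to confirm the numerical conditions $d - 2i \ge 1$ and $d - 2i > 2$ used above. The key elementary fact is that a minimal clique cutset of a connected graph of maximum degree at most $\delta$ has size at most $\delta - 1$: a vertex of a clique cutset $C$ with $|C| \ge \delta$ has at most one neighbor outside $C$, whereas minimality of $C$ would force it to have a neighbor in each of the (at least two) components of the complement of $C$. Since a smallest clique cutset is minimal, $k_i \le \delta - 1$ whenever $\alpha_i$ (an induced subgraph of $G$, hence of maximum degree at most $\delta$) has a clique cutset; together with $k_{i+1} \ge k_i + 1$ from Lemma~\ref{lemma:clique_k}, the chain $1 \le k_0 < k_1 < \cdots < k_{\delta'-1} \le \delta - 1$ forces $\delta' \le \delta - 1 < \delta$ (and if $\delta' = 0$ there is nothing to check, as $\delta \ge 1$). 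In particular every index $i$ at which Lemma~\ref{lemma:biglemma} was applied satisfies $i \le \delta' - 1 \le \delta - 2$, so $d > 2\delta - 2$ gives $d - 2i \ge d - 2\delta + 4 \ge 3 > 2$, and $d - 2i \ge 1$ for every $i \le \delta'$ for the same reason. Thus the construction terminates at some $\delta' < \delta$ with $\alpha_{\delta'}$ having no clique cutset, which is the required sequence.

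I expect the main obstacle to be not a single deep step but the bookkeeping: one must check that the inequality $(1-c) + [\,m + (\delta-1)2^\delta(1-c)\,](\delta+\delta^2) < \frac{1}{2}$, the condition $d - 2i > 2$, and the ``each clique centers at most one star separation'' hypothesis of Lemma~\ref{lemma:weight_max} all persist through the iteration, and one must supply the short degree argument capping the smallest clique cutset size at $\delta - 1$, which is precisely what keeps the number of steps below $\delta$. Everything else is immediate from Lemmas~\ref{lemma:clique_skewed}, \ref{lemma:clique_k}, \ref{lemma:biglemma}, and \ref{lemma:weight_max}.
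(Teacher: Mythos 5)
Your proposal is correct and follows essentially the same route as the paper's proof: iterate on the collection of minimal clique separations for smallest-size clique cutsets, take the central bag via Lemmas~\ref{lemma:clique_skewed}, \ref{lemma:clique_k}, \ref{lemma:biglemma} and \ref{lemma:weight_max}, and terminate because the smallest clique cutset size strictly increases while minimal clique cutsets have size at most $\delta-1$. The only cosmetic slips are attributions (pairwise distinct centers comes from the uniqueness in Lemma~\ref{lemma:clique_skewed} rather than Lemma~\ref{lemma:clique_k}, and connectivity of the central bag from Lemma~\ref{lemma:central_bag} rather than Lemma~\ref{lemma:biglemma}), which do not affect correctness.
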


\begin{proof}
We may assume that $G$ {  has} a clique cutset, otherwise the result holds with $\delta' = 0$. If $\delta = 1$, then $G$ consists of a single edge,
{  contradicting the assumption that $G$ has a clique cutset.}
% and $G$ is a $(w, c, 1)$-balanced separator, a contradiction. 
 Therefore, $\delta \geq 2$ and so $d > 2$. Since the maximum degree of $G$ is $\delta$ and every vertex in a minimal clique cutset $C$ has a neighbor in every component of $G \setminus C$, it follows that every minimal clique cutset of $G$ has size at most $\delta - 1$. Let $j_0$ be the size of the smallest clique cutset of $G$. Note that since $G$ is connected, $j_0 \geq 1$. Since $G$ has no $(w, c, d)$-balanced separator and $d \geq 1$, $G$ has no $(w, c, 1)$-balanced separator. Let $\C_1$ be the collection of all minimal clique separations of $G$ that correspond to clique cutsets of size $j_0$. By Lemma \ref{lemma:clique_skewed}, 
 {  every separation in $\C_1$ is $(1-c)$-skewed and}
 for every two distinct separations $(A_1, C_1, B_1), (A_2, C_2, B_2) \in \C_1$, $C_1 \neq C_2$. Therefore, by Lemma \ref{lemma:clique_cutsets_laminar}, $\C_1$ is laminar. Let $(T_{\C_1}, \chi_{\C_1})$ be the tree decomposition of $G$ corresponding to $\C_1$. 
 By Lemma \ref{lemma:biglemma}, the central bag for $T_{\C_1}$ exists and does not have a $(w_{\C_1}, c, d-2)$-balanced separator. Let $\alpha_1$ be the central bag for $T_{\C_1}$ and let $w_1 = w_{\C_1}$. 
 By Lemma \ref{lemma:weight_max}, $w_1(\alpha_1) = 1$ and $w_1^{\max} \leq w^{\max} + 2^{\delta}(1-c)$. 
 {  Since $(1-c) + w^{\max}(\delta + \delta^2) \leq (1-c) + [w^{\max} + (\delta - 1)2^{\delta}(1-c)](\delta + \delta^2) < \frac{1}{2}$, by Lemma \ref{lemma:central_bag}, 
 $\alpha_1$ is connected.} It follows from Lemma \ref{lemma:clique_k} that $\alpha_1$ does not have a clique cutset of size less than or equal to $j_0$.  If $\alpha_1$ does not have a clique cutset, then $\delta' = 1$ and the sequence ends. Otherwise, for $i \in \{2, \hdots, \delta - 1\}$, we define $(\alpha_i, w_i)$ inductively. For $i \in \{2, \hdots, \delta-1\}$, suppose $(\alpha_{i-1}, w_{i-1})$ are such that $\alpha_{i-1}$ is the central bag for a tree decomposition corresponding to a collection of minimal clique separations of $\alpha_{i-2}$ and $w_{i-1}$ is the corresponding weight function on $\alpha_{i-1}$, $\alpha_{i-1}$ is a connected induced subgraph of $G$ with no $(w_{i-1}, c, d_{i-1})$-balanced separator for $d_{i-1}= d - 2(i-1)$, $w_{i-1}(\alpha_{i-1}) = 1$, and $w^{\max}_{i-1} \leq w^{\max} + (i-1)2^\delta (1-c)$. Further, suppose the smallest clique cutset in $\alpha_{i-1}$ has size $j_{i-1}$, where $\delta > j_{i-1} \geq i$. 

{  Since $\delta >i$ and $d>2\delta -2$, it follows that $d-2(i-1)\geq 1$.}
Since $\alpha_{i-1}$ has no $(w_{i-1}, c, d-2(i-1))$-balanced separator, it follows that $\alpha_{i-1}$ has no $(w_{i-1}, c, 1)$-balanced separator. Let $\C_i$ be the collection of all minimal clique separations of $\alpha_{i-1}$ that correspond to clique cutsets of size $j_{i-1}$. By Lemmas \ref{lemma:clique_skewed} and \ref{lemma:clique_cutsets_laminar}, $\C_i$ is laminar. 
Since $w_{i-1}^{\max} \leq w^{\max} + (i-1)2^{\delta}(1-c)$ and $i < \delta$, it follows that $(1-c) + w_{i-1}^{\max}(\delta + \delta^2) < (1-c) + [w^{\max} + (\delta-1) 2^{\delta}(1-c)](\delta + \delta^2) < \frac{1}{2}$. 
Since $d > 2\delta - 2$, {   $i < \delta$, and $\delta \geq 2$,} it follows that $d_{i-1} = d - 2(i-1) \geq d - 2(\delta - 2) > 2$. 
Since $\alpha_{i-1}$ has no $(w_{i-1}, c, d-2(i-1))$-balanced separator, $d_{i-1} > 2$ and $(1-c) + w^{\max}_{i-1}(\delta + \delta^2) < \frac{1}{2}$, it follows from Lemma \ref{lemma:biglemma} that the central bag for $\C_i$ exists and does not have a $(w_{\C_i}, c, d_i)$-balanced separator, where $d_i = d_{i-1} -2  = d - 2i \geq 1$. Let $T_{\C_i}$ be the tree decomposition of $\alpha_{i-1}$ corresponding to $\C_i$. Let $\alpha_i$ be the central bag for $T_{\C_i}$ and let $w_i = w_{\C_i}$ be the weight function on $\alpha_i$ with respect to $T_{\C_i}$.  
By Lemma \ref{lemma:weight_max}, $w_i(\alpha_i) = 1$ and $w_i^{\max} \leq w_{i-1}^{\max} + 2^\delta (1 - c) \leq w^{\max} + i2^{\delta}(1-c)$. 
{  Since $(1-c) + w_{i-1}^{\max}(\delta + \delta^2) < \frac{1}{2}$,  by Lemma \ref{lemma:central_bag}, $\alpha_i$ is connected.} If $\alpha_i$ has no clique cutset, then $\delta' = i$ and the sequence ends. Otherwise, let $j_i$ be the size of the smallest clique cutset in $\alpha_i$. By Lemma \ref{lemma:clique_k}, it follows that $j_i > j_{i-1}$, so $j_i \geq i+1$. Since the maximum size of a minimal clique cutset in $G$, and thus in $\alpha_i$, is $\delta - 1$, $j_i < \delta$. Thus, minimal clique cutsets used in this proof are of sizes in $\{1, \hdots, \delta-1\}$, so $\delta' < \delta$. Therefore, the sequence $(\alpha_1, w_1), \hdots, (\alpha_{\delta'}, w_{\delta'})$ is well-defined and satisfies the theorem. Further, by construction, $\alpha_{\delta'}$ does not have a clique cutset.
\end{proof}

We call $\alpha_{\delta'}$ the \emph{clique-free bag} for $G$.

\section{Star cutsets and forcers}\label{sec:forcers}

Let $G$ be a graph. A cutset $C$ of $G$ is a \emph{clique star cutset} of $G$ if $C$ is a clique star. Recall that a {  star separation $S = (A, C, B)$ is proper  if $C$ is a clique star cutset.}  {In this section we study properties of separations associated with clique star cutsets. In particular, we establish the notion of a canonical separation that corresponds to a given clique, and
show how to partition a set of canonical clique separations into a bounded number of laminar collections; this is done in Lemma~\ref{lemma:csc_centers_partition}. Then we list several lemmas showing that certains subgraphs are clique star cutset forcers (Lemmas~\ref{lemma:proper_wheel_forcer}, \ref{lemma:univ_wheel_forcer}, and  \ref{lemma:short_pyramid_forcer}, summarized in Lemma~\ref{lemma:forcer_starcutset}).  Finally we show that repeatedly taking central bags leads to a forcer-free subgraph (this is done in Theorem~\ref{thm:strong_forcers_clean}).}

In the following lemma, we show that if two proper star separations cross, then their centers are not anticomplete to each other. 

\begin{lemma}
Let $G$ be a theta-free graph with no clique cutset, let $K_1$ and $K_2$ be cliques of $G$, and let $\mathcal{S}_{1} = (A_{1}, C_{1}, B_{1})$ and $\mathcal{S}_{2} = (A_{2}, C_{2}, B_{2})$ be proper star separations such that $C_1 \subseteq N[K_1]$ and $C_2 \subseteq N[K_2]$. Suppose $S_{1}$ and $S_{2}$ cross. Then, $K_1$ and $K_2$ are not anticomplete to each other. 
\label{lemma:star_sepns_cross}
\end{lemma}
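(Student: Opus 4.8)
The plan is to prove the contrapositive: assuming $K_1$ is anticomplete to $K_2$, I will show that $S_1$ and $S_2$ are non-crossing. Two easy observations come first. Since $C_2\subseteq N[K_2]$ while no vertex of $K_1$ is in $K_2$ or adjacent to $K_2$, we have $K_1\cap C_2=\emptyset$, and symmetrically $K_2\cap C_1=\emptyset$. Also, $C_1$ is a clique star, so $G[C_1]$ is connected; as $A_2$ is anticomplete to $B_2$, either $C_1\subseteq A_2\cup C_2$ or $C_1\subseteq C_2\cup B_2$, and together with $K_1\subseteq C_1$ and $K_1\cap C_2=\emptyset$ this gives, respectively, $K_1\subseteq A_2$ and $C_1\cap B_2=\emptyset$, or $K_1\subseteq B_2$ and $C_1\cap A_2=\emptyset$. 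The same dichotomy holds for $C_2$ relative to $S_1$.

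I will carry out the argument in the representative case $C_1\cap A_2=\emptyset$, $C_2\cap A_1=\emptyset$, $K_1\subseteq B_2$, $K_2\subseteq B_1$ (the other placements of $C_1,C_2$ are treated the same way, see below). Suppose for contradiction that $S_1$ and $S_2$ cross. If $A_1\cap A_2=\emptyset$ then, using $A_1\cap C_2=\emptyset$, we get $A_1\subseteq B_2$, and symmetrically $A_2\subseteq B_1$; hence $X_1\subseteq Y_2$ and $X_2\subseteq Y_1$, so $S_1$ and $S_2$ are non-crossing, a contradiction. So $A_1\cap A_2\neq\emptyset$. Pick a connected component $D$ of $G[A_1]$ meeting $A_2$. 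Since $A_1\cap C_2=\emptyset$ we have $A_1\subseteq A_2\cup B_2$, and as $D$ is connected and $A_2$ is anticomplete to $B_2$, $D\subseteq A_2$. As $D$ is a component of $G[A_1]$ and $A_1$ is anticomplete to $B_1$, we get $N(D)\subseteq C_1$; as $D\subseteq A_2$, also $N(D)\subseteq A_2\cup C_2$; so $N(D)\subseteq C_1\cap C_2$ because $C_1\cap A_2=\emptyset$. Since $G$ is connected and $K_1$ is disjoint from $D\cup N(D)$, the set $N(D)$ separates $D$ from $K_1$; since $G$ has no clique cutset, $N(D)$ is not a clique, so there are non-adjacent $n_1,n_2\in N(D)$.

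Now I build a theta with ends $n_1,n_2$, contradicting that $G$ is theta-free. As $D$ is connected and $n_1,n_2$ each have a neighbor in $D$, there is an induced path $P$ from $n_1$ to $n_2$ with interior in $D$, and it has length at least $2$ since $n_1\not\sim n_2$. Since $n_1,n_2\in C_1\cap C_2$, neither lies in $K_1$ (they are in $C_2$, disjoint from $K_1$) nor in $K_2$, while each has a neighbor in $K_1$ (as $n_i\in C_1\subseteq N[K_1]$) and in $K_2$; using that $K_1,K_2$ are cliques, one extracts induced paths $Q_1\subseteq\{n_1,n_2\}\cup K_1$ and $Q_2\subseteq\{n_1,n_2\}\cup K_2$ from $n_1$ to $n_2$, each of length $2$ or $3$. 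The interiors of $P,Q_1,Q_2$ lie in $D$, $K_1$, $K_2$ respectively; these sets are pairwise disjoint ($K_1\cap K_2=\emptyset$, and $D\subseteq A_1$ is disjoint from $K_1\subseteq C_1$ and from $K_2\subseteq C_2$ because $C_2\cap A_1=\emptyset$) and pairwise anticomplete ($D\subseteq A_2$ versus $K_1\subseteq B_2$; $D\subseteq A_1$ versus $K_2\subseteq B_1$; $K_1$ versus $K_2$). Since $P,Q_1,Q_2$ are induced, each of $n_1,n_2$ has exactly one neighbor on the interior of each path and $n_1\not\sim n_2$, so the subgraph induced on $\{n_1,n_2\}\cup V(P)\cup V(Q_1)\cup V(Q_2)$ is a theta — the desired contradiction. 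Hence $S_1$ and $S_2$ are non-crossing.

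The hard part will be making the case reduction in the second paragraph fully uniform. The relation ``non-crossing'', written in terms of the sets $X_i,Y_i$, is not transparently symmetric under interchanging $A_i$ with $B_i$, so a careful proof should run through all four placements of $C_1$ (relative to $S_2$) and $C_2$ (relative to $S_1$), and in each locate a component $D$ of $G[A_1]$ or $G[B_1]$ that lies on the ``wrong'' side of $S_2$ and satisfies $N(D)\subseteq C_1\cap C_2$; that part always mimics the argument above. The one genuinely awkward configuration is the degenerate one in which no such $D$ exists — this forces $C_1\cap C_2=\emptyset$ and a very rigid partition of $V(G)$ into the two ``clusters'' around $K_1$ and $K_2$ joined only through bridging edges, and there one must either verify directly that $S_1$ and $S_2$ are non-crossing, or build the theta instead from a shortest $K_1$–$K_2$ path together with $K_1$ and $K_2$. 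Once the non-adjacent pair $n_1,n_2$ (in $C_1\cap C_2$, or in the degenerate case in $C_1$ or $C_2$) is in hand, the theta construction is routine.
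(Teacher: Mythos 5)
Your core argument is exactly the paper's: assume $K_1$ anticomplete to $K_2$, locate a ``corner'' $D$ (a component of $A_1\cap A_2$ in the paper, of $G[A_1]$ meeting $A_2$ in your write-up) with $N(D)\subseteq C_1\cap C_2$, and then play the no-clique-cutset hypothesis against theta-freeness: either $N(D)$ is a clique separating $D$ from $K_1$, or two non-adjacent vertices of $N(D)$ yield a theta through $D$, $K_1$ and $K_2$. That case is carried out correctly. The problem is that you stop there. The paper reduces \emph{all} placements to this single case in one line, ``up to symmetry between $A$ and $B$'': relabelling $A_i\leftrightarrow B_i$ in a separation changes nothing relevant (it is still a proper star separation with cutset in $N[K_i]$, and non-crossing is read as invariant under this relabelling), so one may assume $K_1\subseteq B_2$ and $K_2\subseteq B_1$ outright, and then crossing forces $A_1\cap A_2\neq\emptyset$, so the corner is automatically nonempty. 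Your final paragraph instead defers the other three placements, calls the reduction ``the hard part,'' and leaves a ``degenerate configuration'' (empty corner, $C_1\cap C_2=\emptyset$) with only a gesture at two possible fixes. As submitted this is a genuine gap: the statement you are proving is precisely the uniform one, and the unfinished cases are not optional. (Your suspicion that the printed definition of non-crossing is not literally symmetric under swapping $A_i$ with $B_i$ is not unfounded --- the listed conditions omit $Y_1\subseteq X_2$ and $Y_2\subseteq X_1$ --- but the paper's own proof makes the same WLOG, i.e.\ treats the relation as side-symmetric, and under that reading your main case is the whole proof.)

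Two smaller points. First, your preliminary dichotomy is misjustified: from ``$G[C_1]$ is connected and $A_2$ is anticomplete to $B_2$'' one cannot conclude $C_1\subseteq A_2\cup C_2$ or $C_1\subseteq C_2\cup B_2$ --- a connected set may meet both $A_2$ and $B_2$ through $C_2$. The correct (and simpler) reason is the paper's: $K_1$ is a clique disjoint from $C_2$, hence lies in $A_2$ or in $B_2$, and then $C_1\subseteq N[K_1]$ forces $C_1\cap B_2=\emptyset$ or $C_1\cap A_2=\emptyset$ respectively. Second, you invoke $K_1\subseteq C_1$, which is not among the hypotheses (only $C_1\subseteq N[K_1]$ is assumed); it is true in the intended applications but is not needed anywhere once the dichotomy is derived as above.
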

\begin{proof}
Suppose $K_1$ is anticomplete to $K_2$. Then, $K_1 \cap N[K_2] = \emptyset$, so $K_1$ is contained in a connected component of $G \setminus C_{2}$. Similarly,  $K_2$ is contained in a connected component of $G \setminus C_{1}$. Up to symmetry between $A$ and $B$, assume that $K_1 \subseteq B_{2}$ and $K_2 \subseteq B_{1}$. Then, $C_{1} \cap A_{2} = \emptyset$ and $C_{2} \cap A_{1} = \emptyset$. Since $S_{1}$ and $S_{2}$ cross, it follows that $A_{1} \cap A_{2} \neq \emptyset$. Let $A = A_1 \cap A_2$. Suppose $C_{1} \subseteq B_{2}$. Then, $C_{1}$ is anticomplete to $A$. Because $A \subseteq A_{1}$ and $A_{1}$ is anticomplete to $B_{1}$, it follows that $B_{1}$ is anticomplete to $A$. Finally, since $A_{1} \cap C_{2} = \emptyset$, it follows that $A_{1} \setminus A \subseteq B_{2}$, so $A$ is anticomplete to $A_{1} \setminus A$. Therefore, $A$ is anticomplete to $G \setminus A$, a contradiction, so $C_{1} \cap C_{2} \neq \emptyset$. 

Let $C = C_{1} \cap C_{2}$, let $A'$ be a connected component of $A$, and let $C' = N_C(A')$. Suppose there exists $c_1, c_2 \in C'$ such that $c_1c_2 \not \in E(G)$. Then, $G$ contains a theta between $c_1$ and $c_2$ through $A'$, $K_1$, and $K_2$, a contradiction. Therefore, $C'$ is a clique. Since $A_{1} \cap A_{2}$ is anticomplete to $B_{1}$ and $B_{2}$, it follows that $N(A) \subseteq C$, so $N(A') = C'$. Then, $A'$ is a connected component of $G \setminus C'$, so $C'$ is a clique cutset of $G$, a contradiction.  
\end{proof}

The next lemma shows that if $Y$ is a set of cliques of size at most $k$, then there exists a partition of $Y$ into $(k + \delta k)\sum_{j=0}^{k-1} {\delta \choose j} + 1$ parts such that every two cliques in the same part are anticomplete to each other. 

\begin{lemma}
Let $\delta, k$ be positive integers with $k \leq \delta$ and let $f(k, \delta) = (k + \delta k)\sum_{j=0}^{k-1} {\delta \choose j} + 1$. Let $G$ be a graph with maximum degree $\delta$ and let $Y = \{K_1, \hdots, K_t\}$ be a set of cliques of $G$ of size at most $k$. Then, there exists a partition $(Y_1, \hdots, Y_{f(k, \delta)})$ of $Y$ such that for every $\ell \in \{1, \hdots, f(k, \delta)\}$ and $K_i, K_j \in Y_\ell$, $K_i$ is anticomplete to $K_j$.
\label{lemma:csc_centers_partition}
\end{lemma}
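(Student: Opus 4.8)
The plan is to recast the statement as a graph-colouring problem. I would introduce the \emph{conflict graph} $H$ with vertex set $Y=\{K_1,\dots,K_t\}$, declaring two distinct members $K_i,K_j$ of $Y$ adjacent in $H$ exactly when $K_i$ is \emph{not} anticomplete to $K_j$ in $G$. A partition $(Y_1,\dots,Y_{f(k,\delta)})$ of $Y$ in which every two cliques inside a common part are anticomplete is then precisely a proper colouring of $H$ with $f(k,\delta)$ colours (allowing empty colour classes), so it suffices to show $\chi(H)\le f(k,\delta)$. For this I would bound the maximum degree of $H$ by $f(k,\delta)-1$ and then appeal to the fact that a graph of maximum degree $D$ is greedily $(D+1)$-colourable.

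The degree bound is a two-step counting estimate. Fix $K\in Y$. If $K'\in Y$ is not anticomplete to $K$, then there is an edge $uu'$ with $u\in K$ and $u'\in K'$, so $u'\in N(u)\subseteq N[K]$; hence $K'$ meets $N[K]$. First I would bound $|N[K]|$: since $|K|\le k$ and $G$ has maximum degree $\delta$, $|N[K]|\le |K|+\sum_{u\in K}|N(u)|\le k+\delta k$. Second, for a fixed vertex $v\in V(G)$ I would count the cliques of $G$ of size at most $k$ containing $v$: each such clique is $\{v\}\cup S$ with $S$ a clique contained in $N(v)$ and $|S|\le k-1$, and the number of subsets of $N(v)$ of size at most $k-1$ is at most $\sum_{j=0}^{k-1}\binom{|N(v)|}{j}\le\sum_{j=0}^{k-1}\binom{\delta}{j}$ (using $|N(v)|\le\delta$). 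Combining the two estimates, the number of $K'\in Y\setminus\{K\}$ not anticomplete to $K$ is at most $|N[K]|\cdot\sum_{j=0}^{k-1}\binom{\delta}{j}\le(k+\delta k)\sum_{j=0}^{k-1}\binom{\delta}{j}=f(k,\delta)-1$, so $\Delta(H)\le f(k,\delta)-1$.

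Finally, colour the vertices of $H$ one at a time, each time choosing a colour from $\{1,\dots,f(k,\delta)\}$ not used on an already-coloured neighbour; this is always possible since each vertex has at most $f(k,\delta)-1$ neighbours. Letting $Y_\ell$ be the set of $K_i$ receiving colour $\ell$ (some $Y_\ell$ possibly empty) yields the desired partition. I do not expect a genuine obstacle: the whole argument is routine counting feeding into greedy colouring, and the only points needing a little care are the observation that ``not anticomplete to $K$'' forces a vertex in $N[K]$, and arranging the bookkeeping of the two counting bounds so their product matches $f(k,\delta)-1$ exactly.
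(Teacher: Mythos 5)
Your proposal is correct and follows essentially the same route as the paper: the same conflict graph on $Y$, the same two counting estimates ($|N[K]|\le k+\delta k$ and at most $\sum_{j=0}^{k-1}\binom{\delta}{j}$ cliques of size at most $k$ through a fixed vertex) to bound its maximum degree by $f(k,\delta)-1$, and then a $(\Delta+1)$-colouring whose colour classes give the partition.
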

\begin{proof}
Let $H$ be a graph with vertex set $V(H) = \{x_1, \hdots, x_t\}$, and for $x_i, x_j \in V(H)$, {  $i\neq j$,} let $x_ix_j \in E(H)$  if and only if $K_i$ is not anticomplete to $K_j$ in $G$. Let $x_i \in V(H)$ and let {   $x_j \in N_H(x_i)$.}Then, $K_i$ is not anticomplete to $K_j$, so 
$K_j \cap N[K_i] \neq \emptyset$. Let $v \in K_j \cap N[K_i]$. Then, $K_j \subseteq N[v]$. 
Since {  $|N[K_i]| \leq k + \delta k$} and $|N[{  u}]| \leq \delta$ for all ${  u} \in V(G)$, it follows that $K_i$ is not anticomplete to at most $(k + \delta k) \sum_{j=0}^{k-1} {\delta \choose j}$ cliques of size at most $k$. Therefore, the maximum degree of $H$ is at most $(k + \delta k) \sum_{j=0}^{k-1} {\delta \choose j}$. 

Since the maximum degree of $H$ is at most $(k + \delta k) \sum_{j=0}^{k-1} {\delta \choose j}$, it follows that {  the chromatic number of $H$ is at most} $(k + \delta k) \sum_{j=0}^{k-1} {\delta \choose j} + 1 = f(k, \delta)$. Let $C: V(H) \to \{1, \hdots, f(k, \delta)\}$ be a coloring of $H$ and let $Y_1, \hdots, Y_{f(k, \delta)}$ be the color classes of $C$. Then, $(Y_1, \hdots, Y_{f(k, \delta)})$ is a partition of $Y$ such that if $\ell \in \{1, \hdots, f(k, \delta)\}$ and $K_i, K_j \in Y_\ell$, then $K_i$ is anticomplete to $K_j$.  
\end{proof}

Let $G$ be a graph with weight function $w$ and let $K$ be a nonempty clique of $G$. A {\em canonical star separation for $K$}, denoted $S_K$, is defined as follows: $S_K = (A_K, C_K, B_K)$, where $B_K$ is a largest weight connected component of $G \setminus N[K]$ if $G \setminus N[K] \neq \emptyset$ and $B_K = \emptyset$ otherwise, $C_K$ is the union of $K$ and {  the set of all vertices} $v \in N[K]$ such that $v$ has a neighbor in $B_K$, and $A_K = V(G) \setminus (B_K \cup C_K)$. The following lemma shows that if $G$ has no balanced separator, then the canonical star separation is unique. 

\begin{lemma}
\label{lemma:skewed}
Let $c \in [\frac{1}{2}, 1)$. Let $G$ be a graph with no $(w, c, 2)$-balanced separator and let $K$ be a nonempty clique of $G$. Then, the canonical star separation $S_K$ for $K$ is unique and $S_K$ is $(1-c)$-skewed. 
\end{lemma}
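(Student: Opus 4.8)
The claim has two parts: uniqueness of the canonical star separation $S_K$, and the fact that it is $(1-c)$-skewed. I would prove them in that order. The key observation is that $N[K]$ is a $2$-bounded set (indeed, it is contained in $N^1[v]$ for any $v \in K$, but more to the point $K$ itself is a clique and $N[K] \subseteq N^2[v]$, so $N[K]$ is $2$-bounded in the sense of the definition — actually $N[K] = N^1[K]$, which is $1$-bounded if we pick the vertices of $K$ as centers, but since $K$ may have up to $\delta$ vertices we only get $2$-boundedness cheaply via a single vertex; either way it is $2$-bounded). Since $G$ has no $(w, c, 2)$-balanced separator, $N[K]$ in particular fails to be such a separator.

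\emph{Uniqueness.} Suppose $G \setminus N[K] \ne \emptyset$ (otherwise $B_K = \emptyset$ is forced and $C_K = N[K]$, $A_K = \emptyset$, so $S_K$ is trivially unique). The only freedom in the definition of $S_K$ is the choice of ``a largest weight connected component $B_K$ of $G \setminus N[K]$,'' and once $B_K$ is fixed, $C_K$ and $A_K$ are determined. So it suffices to show the largest-weight component of $G \setminus N[K]$ is unique. Since $N[K]$ is $2$-bounded and $G$ has no $(w,c,2)$-balanced separator, $N[K]$ is not a $(w,c,2)$-balanced separator, so some component $Z$ of $G \setminus N[K]$ has $w(Z) > c \ge \tfrac12$. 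Because $w(G) = 1$, at most one component can have weight exceeding $\tfrac12$, hence $Z$ is the unique largest-weight component; thus $B_K = Z$ is forced and $S_K$ is unique.

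\emph{Skewedness.} With $B_K = Z$ and $w(Z) > c$, we have $w(A_K) + w(C_K) + w(B_K) = w(G) = 1$, so $w(A_K) \le 1 - w(B_K) < 1 - c$. Hence $S_K$ is $(1-c)$-skewed (with the $A$-side being the small side). Alternatively, this is exactly the conclusion of Lemma~\ref{lemma:seps_are_skewed} applied to the separation $S_K$, whose cutset $C_K \subseteq N[K]$ is $2$-bounded; I would cite that lemma directly rather than repeat the arithmetic.

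\emph{Main obstacle.} There is essentially no hard step here — the proof is a direct analogue of Lemma~\ref{lemma:clique_skewed} with ``clique cutset'' replaced by ``clique star cutset'' and the bound $1$ replaced by $2$. The only thing to be a little careful about is the degenerate case $G \setminus N[K] = \emptyset$, where $B_K$ is defined to be $\emptyset$ and one should just note that then $S_K = (\emptyset, N[K], \emptyset)$ — wait, that would need $A_K = \emptyset$ too, and indeed $A_K = V(G) \setminus (B_K \cup C_K) = V(G) \setminus N[K] = \emptyset$ — so $S_K$ is uniquely determined and is $(1-c)$-skewed vacuously since $w(A_K) = 0 < 1-c$. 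So even that case is immediate.
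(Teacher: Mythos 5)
Your proposal is correct and follows essentially the same route as the paper's proof: $N[K]$ is $2$-bounded (via $N[K]\subseteq N^2[v]$ for any $v\in K$), so it is not a $(w,c,2)$-balanced separator, forcing a unique component of weight greater than $c\ge\tfrac12$, and skewedness then follows from Lemma~\ref{lemma:seps_are_skewed} (or the equivalent one-line arithmetic). The only cosmetic point is that your aside on $1$- versus $2$-boundedness is muddled before landing on the correct statement, and the degenerate case $G\setminus N[K]=\emptyset$ in fact cannot occur, since then $N[K]$ would itself be a $(w,c,2)$-balanced separator.
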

\begin{proof}
Since $G$ has no $(w, c, 2)$-balanced separator, $N[K]$ is not a $(w, c, 2)$-balanced separator. It follows that if $B_K$ is a largest weight connected component of $G \setminus N[K]$, then $w(B_K) > c$. Since $c \in [\frac{1}{2}, 1)$ and $w(G) = 1$, the largest weight connected component of $G \setminus N[K]$ is unique, and thus $S_K$ is unique. Since $C_K$ is a 2-bounded set and $G$ has no $(w, c, 2)$-balanced separator, it follows from Lemma \ref{lemma:seps_are_skewed} that $S_K$ is $(1-c)$-skewed. 
\end{proof}

Let $G$ be a graph. Let $X, Y, Z$ be disjoint subsets of $V(G)$. We say that
{\em $X$ separates $Y$ from $Z$} if there exist distinct components $C_Y,C_Z$
of $G\setminus X$ such that $Y \subseteq C_Y$ and $Z \subseteq C_Z$. Recall that a \emph{wheel} $(H, x)$ of $G$ consists of a hole $H$ and a vertex $x$ that has at least three neighbors in $H$. A \emph{sector} of $(H,x)$ is a path $P$ of $H$ whose ends are adjacent to $x$, and such that $x$ is anticomplete to $P^*$ {  (recall that $P^*$ is the set of interior vertices of $P$)}. A sector $P$ is a \emph{long sector} if $P^*$ is nonempty.  We now define several types of wheels that we will need.  {They are illustrated in Figure \ref{fig:types-of-wheels}}. 

A wheel $(H, x)$
is a \emph{universal
wheel} if $x$ is complete to $H$. A wheel $(H, x)$ is a \emph{twin wheel} if $N(x) \cap H$ induces a path of length 2. If $(H, x)$ is a twin wheel and $x_1 \dd x_2 \dd x_3$ is the path of length 2 induced by $N(x) \cap H$, we say $x_2$ is the \emph{clone of $x$ in $H$}. Note that if $(H, x)$ is a twin wheel and $x_2$ is the clone of $x$ in $H$, then $((H \setminus \{x_2\}) \cup \{x\}, x_2)$ is also a twin wheel. 
Suppose $(H, x)$ is a twin wheel {  contained in a graph $G$} and $x_2$ is the clone of $x$ in $H$. We say $(H, x)$ is \emph{$x$-rich} if there is a path {  in $G$} from $x$ to $V(H) \setminus N[x]$ containing no neighbors of $x_2$ other than $x$, and \emph{$x_2$-rich} if there is a path {  in $G$} from $x_2$ to $V(H) \setminus N[x]$ containing no neighbors of $x$ other than $x_2$. We say $(H, x)$ is \emph{$x$-poor} if it is not $x$-rich, and \emph{$x_2$-poor} if it is not $x_2$-rich. We say that $(H, x, x_2)$ is a \emph{terminal twin wheel} if $(H, x)$ is a twin wheel and $x_2$ is the clone of $x$ in $H$, and $(H, x)$ is either $x$-poor or $x_2$-poor. A wheel $(H, x)$ is a \emph{short pyramid} if $|N(x) \cap H| = 3$ and $x$ has exactly {  one pair of} adjacent neighbors in $H$. A wheel is \emph{proper} if it is not a twin wheel or a short pyramid. If $(H, x)$ is a short pyramid ({  resp.} proper wheel), then $x$ is said to be the \emph{center} of a short pyramid ({  resp.} proper wheel) in $H$. 

\begin{figure}[ht]
\begin{center}
\begin{tikzpicture}[scale=0.29]

\node[inner sep=2.5pt, fill=black, circle] at (0, 5)(v1){};
 \node[inner sep=2.5pt, fill=black, circle] at (0, 10)(v2){}; 
 \node[inner sep=2.5pt, fill=black, circle] at (0, 0)(v3){};
 \node[inner sep=2.5pt, fill=black, circle] at (5, 5) (v4){}; 
  \node[inner sep=2.5pt, fill=black, circle] at (-5, 5) (v5){}; 
  \node[inner sep=2.5pt, fill=black, circle] at (3.54, 8.54) (v6){}; 
\node[inner sep=2.5pt, fill=black, circle] at (-3.54, 8.54) (v7){}; 
  \node[inner sep=2.5pt, fill=black, circle] at (3.54, 1.46) (v8){}; 
  \node[inner sep=2.5pt, fill=black, circle] at (-3.54, 1.46) (v9){};

 \draw[black, thick] (v1) -- (v2);
 \draw[black, thick] (v1) -- (v3);
 \draw[black, thick] (v1) -- (v4);
 \draw[black, thick] (v1) -- (v5);
  \draw[black, thick] (v1) -- (v6);
 \draw[black, thick] (v1) -- (v7);
 \draw[black, thick] (v1) -- (v8);
 \draw[black, thick] (v1) -- (v9);

\vspace{-0.75cm}
\draw[black, thick] (0,5) circle (5);

\end{tikzpicture}
\hspace{0.7cm}
\begin{tikzpicture}[scale=0.29]

\node[inner sep=2.5pt, fill=black, circle] at (0, 5)(v1){};
 \node[inner sep=2.5pt, fill=black, circle] at (0, 0)(v2){}; 
 \node[inner sep=2.5pt, fill=black, circle] at (-3, 1)(v3){};
 \node[inner sep=2.5pt, fill=black, circle] at (3, 1) (v4){}; 
 
 \draw[black, thick] (v1) -- (v2);
 \draw[black, thick] (v1) -- (v3);
 \draw[black, thick] (v1) -- (v4);
\vspace{-0.75cm}
\draw[black, thick] (0,5) circle (5);

\end{tikzpicture}
\hspace{0.7cm}
\begin{tikzpicture}[scale=0.30]
\node[inner sep=2.5pt, fill=black, circle] at (0, 5)(v1){};
 \node[inner sep=2.5pt, fill=black, circle] at (0, 10)(v2){}; 
 \node[inner sep=2.5pt, fill=black, circle] at (-2, 0.4)(v3){};
 \node[inner sep=2.5pt, fill=black, circle] at (2, 0.4) (v4){}; 
 
 \draw[black, thick] (v1) -- (v2);
 \draw[black, thick] (v1) -- (v3);
 \draw[black, thick] (v1) -- (v4);
\vspace{-0.75cm}
\draw[black, thick] (0,5) circle (5);

\end{tikzpicture}
\end{center}
%\vspace{-0.75cm}
\caption{  Universal wheel, twin wheel, and short pyramid}
\label{fig:types-of-wheels}
\end{figure}
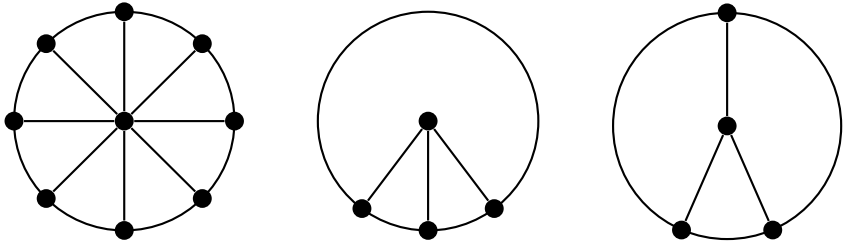

The following three lemmas show that proper wheels and short pyramids generate clique star cutsets. 

\begin{lemma}[\cite{Addario-Berry2008BisimplicialGraphs}, \cite{daSilva2013Decomposition2-joins}]
Let $G$ be a $C_4$-free odd-signable graph that contains a proper wheel $(H, x)$ that is not a universal wheel. Let $x_1$ and $x_2$ be the endpoints of a long sector $Q$ of $(H, x)$. Let $W$ be the set of all vertices $h$ in $H \cap N(x)$
such that the subpath of $H \setminus \{x_1\}$ from $x_2$ to $h$ contains an even number of neighbors of $x$, and let $Z = H \setminus (Q \cup N(x))$. Let $N' = N(x)\setminus W$. Then, $N' \cup \{x\}$ is a cutset of $G$ that separates $Q^*$ from $W \cup Z$.
\label{lemma:proper_wheel_forcer}
\end{lemma}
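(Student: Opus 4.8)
The plan is to show directly that no component of $G \setminus (N' \cup \{x\})$ can meet both $Q^*$ and $W \cup Z$, by using odd-signability (Theorem \ref{os}) to forbid the configurations that a connecting path would create. First I would fix a signing of $G$ in which every triangle and hole has odd weight, and set up notation: write $H = q_1 \dd \cdots \dd q_k \dd \cdots \dd q_1$ cyclically with $x_1 = q_1$, $x_2 = q_2$ the ends of the long sector $Q$, so that $Q = q_2 \dd q_3 \dd \cdots \dd q_1$ traverses the ``long'' way around $H$ and $Q^*$ is nonempty. The neighbors of $x$ on $H$ partition $H$ into sectors; $W$ is (roughly) the set of $x$-neighbors reachable from $x_2$ along $H \setminus \{x_1\}$ by crossing an even number of $x$-neighbors, and $Z$ is what is left of $H$ after removing $Q$ and $N(x) \cap H$. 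The first observation is that $N' \cup \{x\}$ really is a subset of $N[x]$, hence $2$-bounded, and that $Q^* \neq \emptyset$ lies in $G \setminus (N' \cup \{x\})$ since $x$ is anticomplete to $Q^*$ and the ends of $Q$ have been removed.

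Next I would argue the separation. Suppose for contradiction there is a path $P$ in $G \setminus (N' \cup \{x\})$ from $Q^*$ to $W \cup Z$; choose $P$ minimal, so its interior avoids $Q^* \cup W \cup Z$ and it has one end $a \in Q^*$ and one end $b \in W \cup Z$. Because $x$ has been deleted and $W' := N(x) \cap H$ is ``half'' deleted, the attachments of $P$ to $H$ are constrained: $P$ can only touch $H$ in $Q$, in $W$, in $Z$, and in $W_{\text{keep}} := W' \setminus N'$ (the $x$-neighbors that survive). The strategy is then a case analysis on how $P$ attaches, in each case exhibiting a theta, a prism, or an even wheel among $\{x\} \cup H \cup P$, contradicting Theorem \ref{os}. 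The cleanest route: contract/shortcut so that effectively we get a hole $H'$ obtained from $H$ by rerouting through $P$, and then count the neighbors of $x$ on $H'$ modulo $2$; the combinatorial definition of $W$ (even number of $x$-neighbors between $x_2$ and $h$ along $H \setminus \{x_1\}$) is exactly rigged so that if $P$ joins $Q^*$ to $W \cup Z$ then the wheel $(H', x)$ is an \emph{even} wheel, or else $x$ together with two sectors and $P$ forms a theta or prism. This is the standard ``wheel is a decomposition forcer'' argument from \cite{Addario-Berry2008BisimplicialGraphs,daSilva2013Decomposition2-joins}, and since the statement is quoted from there, I would largely import it.

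Concretely, the steps in order: (1) reduce to a minimal connecting path $P$ with ends $a\in Q^*$, $b\in W\cup Z$ and interior disjoint from $H$ except possibly through surviving $x$-neighbors; (2) show $P$ together with a suitable arc of $H$ forms a hole $H'$ with $x \notin V(H')$ and $x$ having $\ge 3$ neighbors on $H'$, i.e. $(H',x)$ is a wheel — here one uses that $a$ has the two sector-ends adjacent to $x$ on either side; (3) compute the parity of $|N(x)\cap H'|$: going from the sector containing $a$ around through $b$, the number of deleted-vs-kept $x$-neighbors passed is controlled by the defining condition of $W$, forcing $(H',x)$ even, contradicting Theorem \ref{os}; (4) handle the degenerate subcases where step (2) fails to produce a wheel — there $\{x\}\cup Q \cup (\text{other sector}) \cup P$ is a theta or a prism instead, again contradicting Theorem \ref{os}. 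I expect the main obstacle to be step (3)/(4): the bookkeeping of exactly which $x$-neighbors survive in $N' = N(x)\setminus W$ versus which are deleted, and verifying that every possible attachment pattern of $P$ — including $P$ attaching to $Q^*$ at several points, or to $Z$, or passing through a kept neighbor of $x$ — falls into one of the three forbidden patterns. Making the parity argument airtight across all these attachment patterns, rather than the one ``generic'' case, is the delicate part; $C_4$-freeness is used to rule out short chords that would otherwise spoil the hole/prism structure.
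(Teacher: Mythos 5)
The paper contains no proof of this lemma: it is imported wholesale from \cite{Addario-Berry2008BisimplicialGraphs} and \cite{daSilva2013Decomposition2-joins}, so there is no in-paper argument to compare yours against. Your outline does correctly identify the strategy used in those references --- suppose a path $P$ in $G \setminus (N' \cup \{x\})$ joins $Q^*$ to $W \cup Z$, take it minimal, and exhibit an even wheel, theta, or prism inside $H \cup \{x\} \cup P$, contradicting Theorem \ref{os} --- and your remark that $C_4$-freeness is what keeps chords from spoiling the hole/prism structure is also on target.

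As a proof, however, the proposal has a genuine gap: your steps (2)--(4), namely the classification of how $P$ can attach to $H \cup \{x\}$ (in $Q^*$, in the surviving $x$-neighbors $W$, in $Z$, possibly at several vertices) together with the parity bookkeeping showing each pattern produces a forbidden configuration, are precisely the mathematical content of the lemma, and they are asserted rather than carried out --- you say explicitly that you would ``largely import'' them from the cited papers. The details are not routine: the parity convention puts $x_2$ in $N'$, not in $W$ (the subpath of $H\setminus\{x_1\}$ from $x_2$ to itself contains exactly one neighbor of $x$), and the $x$-neighbors encountered along $H \setminus \{x_1\}$ alternate between $W$ and $N'$, facts your sketch only gestures at (``roughly''); moreover your step (2) claim that rerouting always yields a hole $H'$ avoiding $x$ on which $x$ has at least three neighbors is not true in general --- that failure is exactly why the theta/prism cases arise --- so the split between the even-wheel case and the theta/prism cases must itself be argued, not presumed. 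If the lemma is meant to be used as a quoted result, a citation (as the paper does) suffices; if it is meant to be proved, the case analysis has to be done in full.
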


\begin{lemma}[\cite{daSilva2007TriangulatedGraphs}]
\label{lemma:univ_wheel_forcer}
Let $G$ be a $C_4$-free odd-signable graph that contains a universal wheel $(H,x)$.
If $G=N[x]$ then for every two non-adjacent vertices $a$ and $b$ of $H$, $N[x]\setminus \{ a,b\}$ is a cutset of $G$ that separates $a$ and $b$.
If $G\setminus N[x]\neq \emptyset$ then for every connected component $C$ of $G\setminus N[x]$, there exists $a\in H$ such that $a$
has no neighbor in $H$, i.e. $N[x]\setminus \{ a\}$ is a cutset of $G$ that separates $a$ from $C$.
\end{lemma}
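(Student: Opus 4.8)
The two cases are independent. If $G=N[x]$ there is nothing to prove: any wheel centre has at least three neighbours on its hole, so $x\notin V(H)$, and hence for non-adjacent $a,b\in V(H)$ the set $N[x]\setminus\{a,b\}$ equals $V(G)\setminus\{a,b\}$; deleting it leaves the edgeless graph on $\{a,b\}$, which is disconnected, so $N[x]\setminus\{a,b\}$ is a cutset separating $a$ and $b$. Now assume $G\setminus N[x]\neq\emptyset$ and fix a component $C$ of $G\setminus N[x]$. I would first reduce to a purely structural statement: \emph{it suffices to produce a vertex $a\in V(H)$ with no neighbour in $C$}. Indeed, $a\in V(H)\subseteq N(x)$, so every neighbour of $a$ that survives the deletion of $N[x]\setminus\{a\}$ lies in $V(G)\setminus N[x]$; since $a$ has no neighbour in $C$ and the components of $G\setminus N[x]$ are pairwise anticomplete, $a$ and $C$ lie in distinct components of $G\setminus(N[x]\setminus\{a\})$, so $N[x]\setminus\{a\}$ is the desired cutset.

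So suppose, for contradiction, that every vertex of $H$ has a neighbour in $C$, and set $n=|V(H)|$. Two consequences of $C_4$-freeness drive the argument. First, a hole is never a $C_4$, so $n\geq 5$; moreover $(H,x)$ is a universal wheel, so Theorem \ref{os} forbids it from being even, whence $n$ is odd. Second, if some $v\in V(G)\setminus N[x]$ had two non-adjacent neighbours $h,h'\in V(H)$, then $\{x,h,v,h'\}$ would induce a $C_4$ (as $x$ is complete to $H$ but not to $v$); hence $N(v)\cap V(H)$ is a clique of $H$ for every $v\in V(G)\setminus N[x]$, so in particular every vertex of $C$ is adjacent either to no vertex, to exactly one vertex, or to two consecutive vertices of $H$. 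Using that $C$ is connected and every vertex of $H$ has a neighbour in $C$, I would fix a pair of non-adjacent vertices $h_i,h_j\in V(H)$ and a path $P=p_1\dd\hdots\dd p_m$ in $G[C]$ from $N(h_i)\cap C$ to $N(h_j)\cap C$, chosen — over all such pairs and paths — with $m$ minimum. Here $m\geq 2$, since a single vertex adjacent to both $h_i$ and $h_j$ would have two non-adjacent neighbours on $H$; and by the minimality of $m$ the path $P^{+}=h_i\dd p_1\dd\hdots\dd p_m\dd h_j$ is induced.

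The endgame is to extract a configuration forbidden by Theorem \ref{os}. The path $P^{+}$ together with the two arcs of $H$ joining $h_i$ to $h_j$ are three internally disjoint induced paths from $h_i$ to $h_j$, each of length at least $2$, and the two arcs are anticomplete to each other. In the \emph{clean case}, in which no vertex of $P$ has a neighbour in $V(H)\setminus\{h_i,h_j\}$, these three paths form a theta, contradicting Theorem \ref{os}. In the remaining case some $p_\ell$ has a neighbour $h_k\in V(H)\setminus\{h_i,h_j\}$; since $N(p_\ell)\cap V(H)$ is a clique of $H$, the position of $h_k$ relative to $h_i$ and $h_j$ is tightly constrained, and the plan is to combine this with the minimality of $m$ and a suitable choice of $h_i,h_j$ to produce a strictly smaller forbidden object — a $C_4$, a prism, or an even wheel centred at a vertex of $H$ — again ruled out by Theorem \ref{os}. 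This attachment analysis is the only substantive part of the argument, and is exactly what is carried out in \cite{daSilva2007TriangulatedGraphs}; once it is in place the contradiction is complete, so some vertex of $H$ has no neighbour in $C$, and the lemma follows. The main obstacle, as indicated, is precisely the bookkeeping in this last case: one must pin down how the interior of $P$ can attach to $H$ and show that no such attachment can simultaneously destroy every forbidden configuration.
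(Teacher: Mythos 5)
The paper gives no proof of this lemma at all: it is imported verbatim from \cite{daSilva2007TriangulatedGraphs}, so there is no internal argument to compare yours against. Judged as a blind attempt, everything you actually carry out is correct: the case $G=N[x]$ is indeed trivial; the reduction of the second case to finding a vertex of $H$ with no neighbour in the fixed component $C$ is sound (and correctly reads the statement's ``no neighbor in $H$'' as the intended ``no neighbor in $C$''); the key observation that $C_4$-freeness forces every vertex of $G\setminus N[x]$ to meet $H$ in a clique (a vertex or an edge of $H$) is the right starting point, as are the facts that $|H|\ge 5$ and $|H|$ is odd by Theorem \ref{os}; and in your ``clean'' case the three paths do form a theta (each of length at least $2$ since $h_i h_j\notin E(G)$), contradicting Theorem \ref{os}.

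The problem is the remaining case, where the minimal path $P$ has attachments to $H\setminus\{h_i,h_j\}$: this is not a residual bookkeeping step but the entire substance of the lemma, and you do not carry it out — you defer it explicitly to \cite{daSilva2007TriangulatedGraphs}. Your minimality of $m$ over all non-adjacent pairs does give real leverage (for instance, a vertex of $H\setminus\{h_i,h_j\}$ adjacent to some $p_\ell$ with $2\le \ell\le m-1$ must be a common neighbour of both $h_i$ and $h_j$, and a vertex adjacent to $p_1$ must be adjacent to $h_i$ by the clique property), but converting these constraints into a forbidden configuration still requires a genuine analysis; the natural target is an even wheel centred at $x$ itself — since $x$ is complete to $H$ and anticomplete to $P$, the parity of the number of $H$-vertices on a rerouted hole is what one exploits, using that $|H|$ is odd — rather than an even wheel centred at a vertex of $H$ as you suggest. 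So if citing \cite{daSilva2007TriangulatedGraphs} is acceptable, your write-up is no less complete than the paper's own treatment; as a self-contained proof it has a real gap at precisely the step you yourself flag as the main obstacle.
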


\begin{lemma}
{\em (\cite{Conforti2002Even-hole-freeTheorem})}
\label{lemma:short_pyramid_forcer}
Let $G$ be a $C_4$-free odd-signable graph that contains a wheel $(H,x)$ that is a short pyramid. Let $x_1,x_2$ and $y$ be the neighbors of $x$ in $H$
such that $x_1x_2$ is an edge. For $i \in \{1,2\}$ let $H_i$ be the  sector of $(H,x)$  with ends $y,x_i$.
Then, $H_1$ and $H_2$ are long sectors of $(H, x)$, and $S = N(x) \cup N(y)$ is a cutset of $G$ that separates $H_1 \setminus S$
from $H_2 \setminus S$.
\end{lemma}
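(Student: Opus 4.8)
The plan is to prove the two assertions separately. For the first, observe that by definition of a short pyramid $N(x)\cap V(H)=\{x_1,x_2,y\}$ and $\{x_1,x_2\}$ is the unique pair of adjacent neighbours of $x$ on $H$, so $y\not\sim x_1$ and $y\not\sim x_2$; hence the three sectors of $(H,x)$ are the edge $x_1x_2$ together with the two arcs $H_1$ (ends $y,x_1$) and $H_2$ (ends $y,x_2$), and since $y$ is nonadjacent to both $x_1$ and $x_2$ these arcs have nonempty interior and so are long. I also record, for later, that $H_1$ cannot have length exactly $2$: if $H_1=y\dd m\dd x_1$, then $\{x,y,m,x_1\}$ induces the $4$-cycle $x\dd y\dd m\dd x_1\dd x$ (using $xy,ym,mx_1,x_1x\in E(G)$ and $xm,yx_1\notin E(G)$), contradicting that $G$ is $C_4$-free; thus $H_1$, and symmetrically $H_2$, has length at least $3$. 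Writing $m_i$ for the neighbour of $y$ on $H_i$, this gives $m_1\notin\{y,x_1\}$, $m_2\notin\{y,x_2\}$, and since $H$ is induced and $S=N(x)\cup N(y)=N[x]\cup N[y]$ (as $x\sim y$), we get $S\cap V(H)=\{y,m_1,x_1,x_2,m_2\}$ and $V(H)\setminus S=(H_1\setminus S)\cup(H_2\setminus S)$, where $H_1\setminus S$ and $H_2\setminus S$ are disjoint, nonempty, connected subpaths of $H$. In particular each of them lies in a single component of $G\setminus S$, so to prove the second assertion it suffices to show that these two components are distinct; that simultaneously gives that $S$ is a cutset and that $S$ separates $H_1\setminus S$ from $H_2\setminus S$.

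Suppose not, and among all paths of $G\setminus S$ with one end in $H_1\setminus S$ and the other in $H_2\setminus S$ let $R=r_0\dd\cdots\dd r_k$ be one of minimum length, with $r_0\in H_1\setminus S$ and $r_k\in H_2\setminus S$, chosen (subject to minimum length) with the fewest edges between $\{r_1,\dots,r_{k-1}\}$ and $V(H)$. Since $R$ is induced and $G\setminus S$ avoids $N[x]\cup N[y]=S$, no internal vertex of $R$ lies on $H$ or is adjacent to $x$ or $y$; moreover $r_0\in H_1^*$ and $r_k\in H_2^*$, which are not consecutive on the hole $H$, so $r_0\not\sim r_k$ and hence $k\ge2$. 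Now let $A^y$ be the arc of $H$ from $r_0$ to $r_k$ through $y$ (of length at least $4$) and $A^x$ the arc from $r_0$ to $r_k$ through $x_1,x_2$ (of length at least $3$). Both are induced paths; $R$, $A^y$, $A^x$ are pairwise internally disjoint; and the only edges between any two of them other than ones incident with $r_0$ or $r_k$ are possible edges from $\{r_1,\dots,r_{k-1}\}$ to $V(H)\setminus\{r_0,r_k\}$. If there are no such edges, then $R$, $A^y$, $A^x$ form a theta with branch vertices $r_0$ and $r_k$, contradicting Theorem~\ref{os}.

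Hence some $r_j$ ($1\le j\le k-1$) has a neighbour $h\in V(H)\setminus\{r_0,r_k\}$. Since $r_j\not\sim y$, and since by minimality of $k$ the possibility $h\in(H_1\setminus S)\cup(H_2\setminus S)$ forces $j\in\{1,k-1\}$, the surviving cases are $h\in\{x_1,x_2,m_1,m_2\}$ and the two boundary cases ($j=1$ with $h\in H_1\setminus S$, or $j=k-1$ with $h\in H_2\setminus S$); the small cases $k\le3$ are also treated directly. The hard part — and the bulk of the work — is to dispose of all of these: in each case one re-routes $R$ through $h$, and when $h\in\{x_1,x_2\}$ also through $x$ using the triangle $xx_1x_2$, to obtain either a shorter crossing path in $G\setminus S$, or a crossing path of equal length but with strictly fewer edges to $V(H)$, or directly a theta or an even wheel in $G$ — in every case contradicting the choice of $R$ or Theorem~\ref{os}. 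This chord-elimination step is where essentially all of the difficulty lies, which is presumably why the statement is quoted from \cite{Conforti2002Even-hole-freeTheorem} rather than reproved here.
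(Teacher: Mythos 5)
The paper does not prove this lemma at all --- it is quoted from \cite{Conforti2002Even-hole-freeTheorem} --- so the only question is whether your standalone argument is complete. It is not. Your setup is fine: the sectors $H_1,H_2$ are long because $y$ is nonadjacent to $x_1,x_2$, the $C_4$-freeness argument correctly forces each $H_i$ to have length at least $3$ (so $H_i\setminus S\neq\emptyset$), the identification $S\cap V(H)=\{y,m_1,x_1,x_2,m_2\}$ is correct, and in the ``clean'' case (a minimal connecting path $R$ in $G\setminus S$ with no edges from $R^*$ to $V(H)\setminus\{r_0,r_k\}$) the three paths $R$, $A^y$, $A^x$ do form a theta, contradicting Theorem~\ref{os}. (One small slip: the fact that no internal vertex of $R$ lies on $H$ follows from the minimality of $R$, not from $R$ being induced and avoiding $S$, since $H\setminus S$ is nonempty.)

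The genuine gap is the last paragraph: the cases where some internal $r_j$ has a neighbour $h\in\{x_1,x_2,m_1,m_2\}$, together with the boundary cases $j\in\{1,k-1\}$ and small $k$, are exactly the content of the lemma, and you dispose of them only by asserting that ``one re-routes $R$ through $h$'' to reach a contradiction. That assertion does not stand on its own: since $x_1,x_2,m_1,m_2\in S$, a path re-routed through $h$ is \emph{not} a path of $G\setminus S$, so it cannot contradict the choice of $R$; the contradictions must instead come from exhibiting a theta, prism, or even wheel inside $R\cup H\cup\{x\}$, and verifying that one of these always appears requires a real case analysis (including parity counts of attachments, since short pyramids and twin wheels are \emph{not} forbidden in a $C_4$-free odd-signable graph, so not every wheel you build is a contradiction). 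For example, if $k=2$ and $r_1$ is adjacent to exactly $x_1$ and $x_2$ on $H$, no shorter crossing path and no immediate theta arises, and one must argue via the specific holes through $r_1$, $x$ and the arcs of $H$. This is precisely the ``bulk of the work'' you acknowledge omitting, so as written the proposal does not constitute a proof of the lemma.
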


Let $G$ be a graph. A \emph{forcer} $F = (H, K)$ in $G$ consists of a hole $H$ and a clique $K$ such that one of the following holds:
\begin{itemize}
\itemsep-0.2em
    \item $(H, x)$ is a proper wheel of $G$ and $K = \{x\}$.
    
   \item $(H, x)$ is a short pyramid of $G$, $N(x) \cap H = \{x_1, x_2, y\}$ where $x_1x_2$ is an edge, and $K = \{x, y\}$.
    
    \item $(H, x, x_2)$ is a terminal twin wheel of $G$, $(H, x)$ is $x_2$-poor, and $K = \{x\}$.

\end{itemize}
If $F = (H, K)$ is a forcer, we say that $K$ is the \emph{center} of $F$.  
{  The forcer described in the first bullet is referred to as a {\em proper wheel forcer}, the one in the second bullet as a {\em short pyramid forcer},
and the one in the third bullet as a {\em twin wheel forcer}.}
A forcer $F = (H, K)$ is \emph{strong} if it is not a twin wheel {   forcer.} The following lemma shows that forcers generate {  clique} star cutsets. 

\begin{lemma}
Let $G$ be a $C_4$-free odd-signable graph and let $F = (H, K)$ be a forcer in $G$. Then, $K$ is the center of a clique star cutset in $G$. 
\label{lemma:forcer_starcutset}
\end{lemma}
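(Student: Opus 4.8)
The plan is to run a case analysis on the type of the forcer $F=(H,K)$, and in each case to produce a cutset $C$ of $G$ with $K\subseteq C\subseteq N[K]$; since $K$ is then a clique (it is a singleton in two of the three cases, and an edge $xy$ in the third), this is exactly what it means for $C$ to be a clique star cutset with center $K$. For proper wheel forcers and short pyramid forcers the desired cutset comes essentially for free from Lemmas~\ref{lemma:proper_wheel_forcer}, \ref{lemma:univ_wheel_forcer}, and~\ref{lemma:short_pyramid_forcer}; for the twin wheel forcer there is no ready-made lemma, so a short direct argument is needed, and this is where I expect the one genuine obstacle.

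Suppose first that $F$ is a proper wheel forcer, so $(H,x)$ is a proper wheel and $K=\{x\}$. If $(H,x)$ is not a universal wheel then it has a long sector (otherwise $x$ would be complete to $H$), so Lemma~\ref{lemma:proper_wheel_forcer} applies and hands us the cutset $N'\cup\{x\}$ with $N'\subseteq N(x)$; it contains $x$ and is contained in $N[x]=N[K]$, as wanted. If $(H,x)$ is a universal wheel, I would invoke Lemma~\ref{lemma:univ_wheel_forcer}: if $G=N[x]$, pick two non-adjacent vertices $a,b$ of the hole $H$ (which exist since $|V(H)|\ge 4$) and take the cutset $N[x]\setminus\{a,b\}$; if $G\setminus N[x]\ne\emptyset$, take the cutset $N[x]\setminus\{a\}$ produced by the lemma. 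In either case $x$ lies in the cutset (it is not a vertex of $H$, having at least three neighbors there) and the cutset is contained in $N[x]=N[K]$. Next suppose $F$ is a short pyramid forcer, so $(H,x)$ is a short pyramid, $N(x)\cap H=\{x_1,x_2,y\}$ with $x_1x_2$ an edge, and $K=\{x,y\}$; since $y\in N(x)$ we have $xy\in E(G)$, so $K$ is a clique. Lemma~\ref{lemma:short_pyramid_forcer} gives that $S=N(x)\cup N(y)$ is a cutset of $G$, and $S\subseteq N[x]\cup N[y]=N[K]$ while $x\in N(y)\subseteq S$ and $y\in N(x)\subseteq S$, so $K\subseteq S\subseteq N[K]$, as required.

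It remains to treat the twin wheel forcer: $(H,x,x_2)$ is a terminal twin wheel, $(H,x)$ is $x_2$-poor, and $K=\{x\}$. Write $N(x)\cap H=\{x_1,x_2,x_3\}$ with $x_1\dd x_2\dd x_3$ the induced path; then the long sector of $(H,x)$ (which is long since $H$ is a hole, hence $x_1x_3\notin E(H)$) has interior $Q^*=V(H)\setminus N[x]$, which is nonempty and connected. Let $D$ be the connected component of $G\setminus N[x]$ containing $Q^*$, and put $C:=N[x]\setminus\{x_2\}$. The heart of the matter is to show that $x_2$ is anticomplete to $D$. I would argue by contradiction: if $x_2$ had a neighbor in $D$, then a shortest path in $G[\{x_2\}\cup D]$ from $x_2$ to a vertex of $Q^*$ would be an induced path of $G$ from $x_2$ to $V(H)\setminus N[x]$, all of whose vertices other than $x_2$ lie outside $N[x]\supseteq N(x)$, hence contains no neighbor of $x$ other than $x_2$; this witnesses that $(H,x)$ is $x_2$-rich, contradicting $x_2$-poorness. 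Given the claim, $V(G)\setminus C=(V(G)\setminus N[x])\cup\{x_2\}$, and since $x_2$ is anticomplete to $D$, the nonempty set $D$ is a union of components of $G\setminus C$ not containing $x_2$; thus $G\setminus C$ is disconnected and $C$ is a cutset. Finally $x\in C\subseteq N[x]=N[K]$ and $K=\{x\}$ is a clique, so $C$ is a clique star cutset with center $K$, completing the proof.

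The only step I expect to require care is the anticompleteness claim in the twin wheel case, i.e.\ translating ``$x_2$-poor'' into the statement that $N[x]\setminus\{x_2\}$ detaches $x_2$ from the long sector; one must be careful that the separating path exhibited is genuinely an induced path of $G$ (for which a shortest path through $D$ suffices) and contains no neighbor of $x$ besides $x_2$. Once that is in place, all three cases conclude by the same routine verification that the cutset lies between $K$ and $N[K]$.
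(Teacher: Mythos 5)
Your proof is correct and follows essentially the same route as the paper: the three cited lemmas handle the proper wheel (universal and non-universal), and short pyramid cases, and for the twin wheel forcer the cutset is $N[x]\setminus\{x_2\}$, exactly as in the paper. The only difference is that you spell out the anticompleteness argument deriving the cutset from $x_2$-poorness, which the paper asserts without detail; your shortest-path argument is the right justification.
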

\begin{proof}
{  If $(H,x)$  is a proper wheel that is not a universal wheel, then by Lemma \ref{lemma:proper_wheel_forcer}, $x$ together with some of its neighbors is
a clique star cutset in $G$. If $(H,x)$ is a universal wheel, then by Lemma  \ref{lemma:univ_wheel_forcer}, $x$ together with some of its neighbors is
a clique star cutset in $G$. If $(H,x)$ is a short pyramid and $y$ is the common node of the two long sectors of $(H,x)$, then by Lemma \ref{lemma:short_pyramid_forcer}, $x,y$ and its neighbors form a clique star cutset in $G$. It follows that if
$F = (H, K)$ is a strong forcer, then the result holds.} 
Therefore, assume $F = (H, K)$ is a twin wheel forcer. It follows that there exist $x \in V(G), x_2 \in V(H)$ such that $(H, x, x_2)$ is a terminal twin wheel, $(H, x)$ is $x_2$-poor, and $K = \{x\}$. Then, it follows that $N[K] \setminus x_2$ is a {  clique} star cutset that separates $x_2$ from $H \setminus N[K]$.
\end{proof}

The following lemma shows that if $F = (H, K)$ is a forcer and $S_K = (A_K, C_K, B_K)$ is the canonical star separation for $K$, then $A_K \cap H \neq \emptyset$. 
\begin{lemma}
\label{lemma:forcer_intersects_A}
\label{lemma:twin_wheel_AKcapH}
Let $G$ be a $C_4$-free odd-signable graph. Let $F = (H, K)$ be a forcer in $G$ and let $S_K = (A_K, C_K, B_K)$ be a canonical star separation for $K$. Then, $A_K \cap H \neq \emptyset$. Furthermore, if for $c \in [\frac{1}{2}, 1)$, $G$ has no $(w, c, 2)$-balanced separator, then $S_K$ is a proper star separation.
\end{lemma}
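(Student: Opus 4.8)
The plan is to prove $A_K\cap H\neq\emptyset$ by splitting into the three kinds of forcer, extracting in each case from the associated forcer lemma a clique star cutset $S$ with $K\subseteq S\subseteq N[K]$ that separates two nonempty subsets $P_1,P_2$ of $V(H)$, labelled so that $P_1\cap N[K]=\emptyset$ and $P_2\cap K=\emptyset$, and then running a single contradiction argument. The universal wheel is the exceptional case in which no such $P_1$ exists (the hole lies inside $N[x]$), and it is handled separately with Lemma~\ref{lemma:univ_wheel_forcer}.

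First I produce $P_1,P_2$. If $(H,x)$ is a proper non-universal wheel and $K=\{x\}$, then Lemma~\ref{lemma:proper_wheel_forcer} says $N'\cup\{x\}$ separates $Q^*$ from $W\cup Z$; put $P_1=Q^*$ (nonempty because $Q$ is a long sector, disjoint from $N[x]$ because $x$ is anticomplete to $Q^*$) and $P_2=W\cup Z$, which is nonempty since the neighbour of $x$ on $H$ consecutive with $x_2$ on the side away from $Q$ lies in $W$ (its subpath to $x_2$ in $H\setminus\{x_1\}$ meets exactly two neighbours of $x$). If $(H,x)$ is a short pyramid and $K=\{x,y\}$, I first note that $C_4$-freeness forces both long sectors of $(H,x)$ to have length at least $3$: were the $y$-$x_1$ sector equal to $y\dd h_1\dd x_1$, then $x\dd x_1\dd h_1\dd y\dd x$ would be an induced $C_4$. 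Lemma~\ref{lemma:short_pyramid_forcer} then gives that $S=N(x)\cup N(y)$ separates $P_1=H_1\setminus S$ from $P_2=H_2\setminus S$, both nonempty by the length bound and both disjoint from $N[K]$ because $H_i\cap N[K]\subseteq N(x)\cup N(y)=S$. If $(H,x,x_2)$ is an $x_2$-poor terminal twin wheel and $K=\{x\}$, then $C_4$-freeness forces $|V(H)|\geq 5$ (a $4$-hole with $x$ adjacent to three consecutive hole vertices again produces an induced $C_4$), so $H\setminus N[x]\neq\emptyset$, and, as in the proof of Lemma~\ref{lemma:forcer_starcutset}, $S=N[x]\setminus\{x_2\}$ separates $P_1=H\setminus N[x]$ from $P_2=\{x_2\}$. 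In each of the three cases $P_1,P_2\subseteq V(H)$ and $P_2\cap K=\emptyset$ (the only vertex $K$ could share with $V(H)$ is the $y$ of a short pyramid, and that $y$ lies in $S$, hence not in $P_2$), while $S\subseteq N[K]$.

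Now the uniform contradiction. Suppose $A_K\cap H=\emptyset$, so $V(H)\subseteq B_K\cup C_K$; since $C_K\subseteq N[K]$ and $B_K$ is disjoint from $N[K]$, this forces $H\setminus N[K]\subseteq B_K$, and in particular $\emptyset\neq P_1\subseteq B_K$. Because $B_K$ is connected and $S\subseteq N[K]$ avoids $B_K$, the whole of $B_K$ lies in one component $D$ of $G\setminus S$, so $P_1\subseteq D$. But $S$ separates $P_1$ from $P_2$, so $P_2$ lies in a component of $G\setminus S$ other than $D$; hence no vertex of $P_2$ belongs to $B_K$ or has a neighbour in $B_K$. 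On the other hand $P_2\subseteq V(H)\subseteq B_K\cup C_K$ and $P_2\cap B_K=\emptyset$, so $P_2\subseteq C_K$, and since $P_2\cap K=\emptyset$ every vertex of $P_2$ has a neighbour in $B_K$ --- contradicting the previous sentence, as $P_2\neq\emptyset$. Thus $A_K\cap H\neq\emptyset$ in the three non-universal cases. For a universal wheel $(H,x)$ with $K=\{x\}$: if $G=N[x]$ then $B_K=\emptyset$, $C_K=\{x\}$ and $A_K\supseteq V(H)$; otherwise $G\setminus N[x]\neq\emptyset$, and Lemma~\ref{lemma:univ_wheel_forcer} applied to the component $B_K$ of $G\setminus N[x]$ produces $a\in V(H)$ with no neighbour in $B_K$, so $a\notin C_K$ and $a\in A_K$.

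For the ``furthermore'', assume $G$ has no $(w,c,2)$-balanced separator. By the first part $A_K\neq\emptyset$. Moreover $N[K]$ is $2$-bounded (it is contained in $N^2[v]$ for any $v\in K$), so if $V(G)=N[K]$ then $N[K]$ itself would be a $(w,c,2)$-balanced separator --- $G\setminus N[K]$ has no components --- contradicting the hypothesis; hence $G\setminus N[K]\neq\emptyset$, so $B_K\neq\emptyset$. Since $A_K$ is anticomplete to $B_K$ (as $S_K$ is a separation) and $A_K,B_K$ are both nonempty, $C_K$ is a cutset of $G$, and $K\subseteq C_K\subseteq N[K]$ makes it a clique star cutset; hence $S_K$ is a proper star separation. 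I expect the main obstacle to be the bookkeeping of the middle step: checking that the pairs $P_1,P_2$ handed over by the three forcer lemmas really are nonempty and lie in the right position relative to $N[K]$ and $K$ --- this is exactly where $C_4$-freeness is needed and where the forcer lemmas' conclusions carry some slack.
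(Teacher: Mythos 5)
Your proof is correct and takes essentially the same route as the paper: the same case analysis over forcer types, using Lemmas \ref{lemma:proper_wheel_forcer}, \ref{lemma:univ_wheel_forcer} and \ref{lemma:short_pyramid_forcer} (and $x_2$-poorness for the twin wheel), with the universal wheel treated separately and the identical argument for the ``furthermore'' part. The differences are only presentational: you finish the non-universal cases with one uniform contradiction (if $A_K\cap H=\emptyset$ then $P_2$ would lie in $C_K\setminus K$ yet have no neighbour in $B_K$) instead of exhibiting a vertex of $A_K\cap H$ directly, and you spell out nonemptiness details (e.g.\ $W\neq\emptyset$, and the length-at-least-$3$ sectors of a short pyramid via $C_4$-freeness) that the paper leaves implicit.
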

\begin{proof}
Let $(H, x)$ be the wheel such that $F = (H, K)$. Suppose first that $(H,x)$ is a wheel such that there exist two long sectors $S_1,S_2$ of $(H,x)$. Lemmas \ref{lemma:proper_wheel_forcer} and \ref{lemma:short_pyramid_forcer} imply that $N[K]$ separates $S_1 \setminus N[K]$ from $S_2 \setminus N[K]$. It follows that for some $i \in \{1,2\}$, $S_i \cap A_K \neq \emptyset$, and so $H \cap A_K \neq \emptyset$.

Next, suppose that $(H, x)$ is a proper wheel with exactly one long sector $S$. If $B_K \cap H = \emptyset$, then $S^* \cap A_K \neq \emptyset$, so we may assume that $S^* \subseteq B_K$. By Lemma \ref{lemma:proper_wheel_forcer}, for some $a \in N(x) \cap H$, $a$ has no neighbor in $B_K$. Therefore, $a \in A_K$ and $A_K \cap H \neq \emptyset$. 

Now, suppose that $(H,x)$ is a universal wheel.   We may assume that $G \neq N[K]$ (since otherwise $B_K = \emptyset$ and $A_K = H)$. Then, it follows from Lemma \ref{lemma:univ_wheel_forcer} that for every component $C$ of $G \setminus N[K]$, there exists $a \in H$ such that $a$ has no neighbor in $C$. In particular, there exists $a \in H$ such that $a$ has no neighbor in $B_K$. Therefore, $a \not \in C_K$ and $a \not \in B_K$, so $a \in A_K$ and $H \cap A_K \neq \emptyset$.

Finally, suppose that $(H, x)$ is a twin wheel, and let $x_2$ be the clone of $x$ in $H$. Then, $(H, x, x_2)$ is a terminal twin wheel, $(H, x)$ is $x_2$-poor, and $K = \{x\}$. Consider $G \setminus N[K]$. If $(H \setminus \{x_1, x_2, x_3\}) \cap B_K = \emptyset$, then $A_K \cap H \neq \emptyset$, so assume $(H \setminus \{x_1, x_2, x_3\}) \subseteq B_K$. Since $(H, x)$ is $x_2$-poor, it follows that $x_2$ does not have a neighbor in $B_K$. Therefore, $x_2 \in A_K$, and $A_K \cap H \neq \emptyset$.

Now, suppose that $c \in [\frac{1}{2}, 1)$ and $G$ has no $(w, c, 2)$-balanced separator. Then, $G \setminus N[K] \neq \emptyset$, and thus $B_K \neq \emptyset$. Since $A_K \neq \emptyset$, it follows that $S_K$ is proper. 
\end{proof}

Let $G'$ be an induced subgraph of $G$. A forcer $F = (H, K)$ is \emph{active for $G'$} if $H \subseteq G'$ and $K \subseteq G'$.

\begin{lemma}
Let $\delta$ be a positive integer, $c \in [\frac{1}{2}, 1)$, and $m \in [0, 1]$, with $(1-c) + m(\delta + \delta^2) < \frac{1}{2}$. Let $G$ be a connected $C_4$-free odd-signable graph with maximum degree $\delta$,  {let $w:V(G) \to [0, 1]$ be a weight function on $G$ with $w(G) = 1$ and $w^{\max} \leq m$}, and suppose $G$ does not have a $(w, c, 2)$-balanced separator. Let $\mathcal{F}$ be a set of forcers, let $Y = \{K : (H, K) \in \mathcal{F}\}$ be the set of centers of $\mathcal{F}$, and let $\C$ be the collection of canonical star separations for centers in $Y$. Suppose $\C$ is laminar and let $(T_\C, \chi_\C)$ be the tree decomposition of $G$ corresponding to $\C$. Then, the central bag $\beta$ for $\C$ exists and no forcer in $\mathcal{F}$ is active for $\beta$.
\label{lemma:not_cutset_in_central_bag}
\end{lemma}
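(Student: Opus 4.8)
The plan is to reduce the statement to the machinery already developed, namely Lemma~\ref{lemma:central_bag} (which produces the central bag and records that it is connected and perpendicular to the given laminar collection) together with Lemma~\ref{lemma:forcer_intersects_A} (which says that the $A$-side of a canonical star separation of a forcer meets the forcer's hole). First I would check that $\C$ satisfies the hypotheses of Lemma~\ref{lemma:central_bag}. By the definition of a forcer, every center $K \in Y$ is a nonempty clique of $G$, and by the definition of the canonical star separation $K \subseteq C_K \subseteq N[K]$, so each member of $\C$ is a star separation and each $C_K$ is a clique star, hence $2$-bounded. Since $G$ has no $(w,c,2)$-balanced separator, Lemma~\ref{lemma:skewed} shows that for each $K \in Y$ the canonical star separation $S_K$ is unique (so $\C = \{S_K : K \in Y\}$ is well defined) and that $S_K$ is $(1-c)$-skewed. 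Thus $\C$ is a laminar collection of $(1-c)$-skewed star separations of the connected graph $G$, and since $w^{\max} \leq m$ with $(1-c) + m(\delta + \delta^2) < \frac12$, Lemma~\ref{lemma:in-arborescence} (applied with $\varepsilon = 1-c$ and $\varepsilon_0 = m(\delta+\delta^2)$, using $w(C_K) \leq w^{\max}|C_K| \leq m(\delta+\delta^2)$) shows $T'_\C$ is an in-arborescence, and Lemma~\ref{lemma:central_bag} then yields that, letting $v$ be the root of $T'_\C$, the central bag $\beta = \chi_\C(v)$ exists, is connected, and is perpendicular to $\C$, i.e.\ $\beta \cap A = \emptyset$ for every $(A, C, B) \in \C$.

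Next I would derive non-activity of forcers from perpendicularity. Suppose, for a contradiction, that some forcer $F = (H, K) \in \mathcal{F}$ is active for $\beta$; in particular $H \subseteq \beta$. Since $K \in Y$, the canonical star separation $S_K = (A_K, C_K, B_K)$ lies in $\C$. By Lemma~\ref{lemma:forcer_intersects_A}, $A_K \cap H \neq \emptyset$; combined with $H \subseteq \beta$ this gives $\beta \cap A_K \neq \emptyset$, contradicting the fact that $\beta$ is perpendicular to $\C$. Hence no forcer in $\mathcal{F}$ is active for $\beta$, which completes the argument.

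This proof is essentially bookkeeping on top of the lemmas already established; the only step that needs a little care is verifying the numerical hypotheses of Lemmas~\ref{lemma:in-arborescence} and~\ref{lemma:central_bag}, i.e.\ bounding $w(C_K)$ by $m(\delta+\delta^2)$ using that $C_K$ is a clique star in a graph of maximum degree $\delta$. I do not expect a genuine obstacle here: all the substantive content is carried by Lemma~\ref{lemma:forcer_intersects_A}, which has already been proved, and by the perpendicularity conclusion of Lemma~\ref{lemma:central_bag}.
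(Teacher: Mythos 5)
Your proposal is correct and follows essentially the same route as the paper's proof: Lemma~\ref{lemma:skewed} gives uniqueness and $(1-c)$-skewedness of the canonical star separations, Lemma~\ref{lemma:central_bag} gives the central bag together with perpendicularity, and Lemma~\ref{lemma:forcer_intersects_A} gives $A_K \cap H \neq \emptyset$, contradicting $H \subseteq \beta$. The extra verification of the hypotheses of Lemma~\ref{lemma:in-arborescence} via $w(C_K) \leq m(\delta+\delta^2)$ is just the bookkeeping the paper leaves implicit.
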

\begin{proof}
By Lemma \ref{lemma:skewed}, every separation in $\C$ is $(1-c)$-skewed. By Lemma \ref{lemma:central_bag}, the central bag $\beta$ for $\C$ exists ({  in particular,} $\beta$ is perpendicular to $\C$). Suppose $F = (H, K)$ is a forcer in $\mathcal{F}$ and let $S_K = (A_K, C_K, B_K)$ be the canonical star separation for $K$. Then, 
{  since $\beta$ is perpendicular to $\C$, $\beta \cap A_K=\emptyset$, and hence $\beta \subseteq C_K \cup B_K$.} By Lemma \ref{lemma:forcer_intersects_A}, it follows that $H \cap A_K \neq \emptyset$, so $H \not \subseteq \beta$ and $F$ is not active for $\beta$. 
\end{proof}

The following theorem generalizes the results of Lemma \ref{lemma:not_cutset_in_central_bag}. 
{  Recall the definition of clique-free bag from the end of Section \ref{sec:clique}: the clique-free bag of a graph $G$ is an induced subgraph $\alpha$ of $G$, formed by taking repeated central bags, such that $\alpha$ does not have a clique cutset. (See Theorem \ref{thm:clique_free_bag} for details).}
%{\color{blue} Should we be more elaborate?}

 \begin{theorem}
 \label{thm:bigthm}
 Let $\delta, d$ be positive integers, let $k$ be a nonnegative integer, let $f(2, \delta) = 2(\delta + 1)^2 + 1$, let $c \in [\frac{1}{2}, 1)$, and let $m \in [0, 1]$, with $d > 2f(2, \delta)\delta + 2\delta$, and $(1-c) + \left[m + f(2, \delta)\delta 2^{\delta}(1-c)\right](\delta + \delta^2) < \frac{1}{2}$. Let $G$ be a connected $C_4$-free odd-signable graph with maximum degree $\delta$,  {let $w:V(G) \to [0, 1]$ be a weight function on $G$ with $w(G) = 1$ and $w^{\max} \leq m$}, and suppose that $G$ does not have a $(w, c, d)$-balanced separator. Let $\mathcal{F}$ be a set of forcers of $G$. Then, there exists a sequence $(\beta_1, w_1), \hdots, (\beta_{2k+1}, w_{2k+1})$, where $\beta_{2k+1} \subseteq \beta_{2k} \subseteq \hdots \subseteq \beta_1 \subseteq \beta_0 = G$, $k \leq f(2, \delta)$, and for $i \in \{1, \hdots, 2k+1\}$,  $w_i$ is a weight function on $\beta_i$, {  with $w_i(\beta_i)=1$}, such that: 
 \begin{itemize}
 \itemsep -0.1em

     \item for $i \in \{0, \hdots, k\}$, $\beta_{2i+1}$ is the clique-free bag for $\beta_{2i}$, 
     
     \item for $i \in \{0, \hdots, k-1\}$, $\beta_{2i+2}$ is the central bag for a tree decomposition corresponding to a laminar collection of proper star separations of $\beta_{2i+1}$ with clique centers of size 1 or 2 (of size 1 if $\mathcal{F}$ does not contain a short pyramid forcer),
     
     \item {  for $i \in \{0, \hdots, k\}$}, $\beta_{2i+1}$ is connected and does not have a $(w_{2i+1}, c, d_{2i+1})$-balanced separator, for $d_{2i+1} = d - 2i\delta - 2(\delta - 1)$, and {  for $i \in \{0, \hdots, k-1\}$}, $\beta_{2i+2}$ is connected and does not have a $(w_{2i+2}, c, d_{2i+2})$-balanced separator for $d_{2i+2} = d - 2(i+1)\delta$,

     \item $w_{2k+1}^{\max} \leq w^{\max} + f(2, \delta)\delta 2^{\delta}(1-c) + (\delta - 1)2^{\delta}(1-c)$,
     
     \item no forcer in $\mathcal{F}$ is active for $\beta_{2k+1}$,
     
     \item $\beta_{2k+1}$ has no clique cutset.
 \end{itemize} 
 \end{theorem}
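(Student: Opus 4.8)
The plan is to build the sequence by iteration, alternating two operations: (a) passing to a clique-free bag via Theorem~\ref{thm:clique_free_bag}, and (b) passing to the central bag of a laminar collection of canonical star separations attached to one preassigned batch of forcer centers. We may assume $\delta\ge 2$, since a graph of maximum degree $1$ has no hole, hence no forcer, and (as one checks directly) admits a $(w,c,d)$-balanced separator, so the hypotheses are vacuous. Put $\beta_0:=G$, and let $Y=\{K:(H,K)\in\mathcal{F}\}$; by the definition of forcer each $K\in Y$ is a clique with $|K|\le 2$, and $|K|=1$ unless $\mathcal{F}$ contains a short pyramid forcer, so Lemma~\ref{lemma:csc_centers_partition} (with $k=2$) partitions $Y$ into $p=f(2,\delta)$ parts $Y_1,\dots,Y_p$ in which any two cliques are anticomplete. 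I would then set $\beta_1$ to be the clique-free bag of $\beta_0$ and $w_1$ the accompanying weight function; the arithmetic hypotheses of Theorem~\ref{thm:clique_free_bag} follow from those of the present theorem because $f(2,\delta)\ge 1$, and $\beta_1$ is connected, has no clique cutset, satisfies $w_1(\beta_1)=1$ and the claimed weight bound, and (as its parameter $\delta'<\delta$) has no $(w_1,c,d_1)$-balanced separator with $d_1=d-2(\delta-1)=d_{2\cdot 0+1}$.

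Inductively, suppose $\beta_{2i+1}$ has been produced, is a connected induced subgraph of $G$ with no clique cutset, has no $(w_{2i+1},c,d_{2i+1})$-balanced separator with $d_{2i+1}=d-2i\delta-2(\delta-1)$, satisfies $w_{2i+1}(\beta_{2i+1})=1$ and the weight bound, and carries no active forcer with center in $Y_1\cup\dots\cup Y_i$. If no forcer of $\mathcal{F}$ is active for $\beta_{2i+1}$, I stop and set $k=i$. Otherwise $i<p$; I let $\C_{i+1}$ be the set of canonical star separations, computed in $\beta_{2i+1}$, of those $K\in Y_{i+1}$ that are centers of forcers active for $\beta_{2i+1}$. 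Here I would check: each such $K$ is a nonempty clique of $\beta_{2i+1}$, so $S_K$ is defined, and since $\beta_{2i+1}$ has no $(w_{2i+1},c,2)$-balanced separator (as $d_{2i+1}>2$), $S_K$ is unique and $(1-c)$-skewed by Lemma~\ref{lemma:skewed}; a forcer active for $\beta_{2i+1}$ is again a forcer of $\beta_{2i+1}$ (the wheel type is witnessed inside $H\cup\{x\}$, and the $x_2$-poor condition of a twin wheel is inherited because a path of $\beta_{2i+1}$ is a path of $G$), so Lemma~\ref{lemma:forcer_intersects_A} makes each $S_K$ a proper star separation with $H\cap A_K\ne\emptyset$; and $\C_{i+1}$ is laminar, since $\beta_{2i+1}$ is theta-free (Theorem~\ref{os}) and clique-cutset-free and the centers in $Y_{i+1}$ are pairwise anticomplete, so Lemma~\ref{lemma:star_sepns_cross} forbids crossings. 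All centers of $\C_{i+1}$ have size $1$ or $2$ (size $1$ if $\mathcal{F}$ has no short pyramid forcer). Taking a tree decomposition corresponding to $\C_{i+1}$ (Lemma~\ref{lemma:noncrossing-separations-td}), I set $\beta_{2i+2}$ to be its central bag and $w_{2i+2}$ the associated weight function: Lemma~\ref{lemma:biglemma} gives that $\beta_{2i+2}$ is perpendicular to $\C_{i+1}$, $w_{2i+2}(\beta_{2i+2})=1$, and $\beta_{2i+2}$ has no $(w_{2i+2},c,d_{2i+1}-2)$-balanced separator, where $d_{2i+1}-2=d-2(i+1)\delta=d_{2i+2}$; Lemma~\ref{lemma:central_bag} gives connectedness, and Lemma~\ref{lemma:weight_max} gives $w_{2i+2}^{\max}\le w_{2i+1}^{\max}+2^\delta(1-c)$ (distinct centers yield distinct, unique $S_K$). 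Moreover no forcer $F=(H,K)\in\mathcal{F}$ with $K\in Y_{i+1}$ is active for $\beta_{2i+2}$: such $F$ would be active for $\beta_{2i+1}$, so $S_K\in\C_{i+1}$, forcing $\beta_{2i+2}\cap A_K=\emptyset$, contradicting $\emptyset\ne H\cap A_K\subseteq\beta_{2i+2}$. I close the step by letting $\beta_{2i+3}$ be the clique-free bag of $\beta_{2i+2}$; since inducing only destroys forcers, no forcer with center in $Y_1\cup\dots\cup Y_{i+1}$ is active for $\beta_{2i+3}$, which completes the inductive step. Since $Y_1\cup\dots\cup Y_p=Y$, after $p$ steps no forcer of $\mathcal{F}$ is active, so the process halts with some $k\le p=f(2,\delta)$, and the resulting sequence satisfies all six bullets.

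The main obstacle is not conceptual but the numerical bookkeeping across the (up to $f(2,\delta)$) iterations. One must check that the ``$d$-budget'' survives: $d_{2i+1}=d-2i\delta-2(\delta-1)$ and $d_{2i+2}=d-2(i+1)\delta$ must stay above the thresholds required by Theorem~\ref{thm:clique_free_bag} ($>2\delta-2$) and by Lemmas~\ref{lemma:skewed},~\ref{lemma:forcer_intersects_A},~\ref{lemma:biglemma} ($>2$), which is exactly what $d>2f(2,\delta)\delta+2\delta$ buys; and that $w^{\max}$, which grows by at most $(\delta-1)2^\delta(1-c)$ per clique-free-bag step and $2^\delta(1-c)$ per central-bag step, stays small enough that ``$(1-c)+[\,\cdot\,](\delta+\delta^2)<\tfrac12$'' keeps holding whenever Theorem~\ref{thm:clique_free_bag} or Lemma~\ref{lemma:biglemma} is invoked, which is the purpose of the hypothesis $(1-c)+[m+f(2,\delta)\delta 2^\delta(1-c)](\delta+\delta^2)<\tfrac12$. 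The only genuinely structural point is that forcers persist as forcers of the successive central bags, and that anticomplete same-part centers really force non-crossing canonical separations; this is the combined content of Lemmas~\ref{lemma:csc_centers_partition} and~\ref{lemma:star_sepns_cross}, together with the observation that inducing preserves the $x_2$-poor condition for twin wheels.
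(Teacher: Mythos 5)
Your proposal is correct and follows essentially the same route as the paper's proof: partition the forcer centers into $f(2,\delta)$ pairwise-anticomplete batches via Lemma~\ref{lemma:csc_centers_partition}, then alternate clique-free bags (Theorem~\ref{thm:clique_free_bag}) with central bags for the laminar collection of canonical star separations of one batch (Lemmas~\ref{lemma:skewed}, \ref{lemma:forcer_intersects_A}, \ref{lemma:star_sepns_cross}, \ref{lemma:biglemma}, \ref{lemma:weight_max}, \ref{lemma:central_bag}), tracking the $d$- and $w^{\max}$-budgets exactly as in the paper. The only deviations --- processing the batches in a fixed order rather than adaptively choosing one with an active forcer, and restricting the collection to centers of currently active forcers (which also makes the application of Lemma~\ref{lemma:forcer_intersects_A} inside $\beta_{2i+1}$ cleaner) --- are cosmetic.
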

    
 \begin{proof}
Let $Y = \{K : (H, K) \in \mathcal{F}\}$ be the set of centers of forcers in $\mathcal{F}$. For all $K \in Y$, $|K| \in \{1, 2\}$, and if $(H, K)$ is not a short pyramid forcer, then $|K| = 1$. Let $(Y_1, \hdots, Y_{f(2, \delta)})$ be a partition of $Y$ as in Lemma \ref{lemma:csc_centers_partition} and let $\mathcal{F}_1, \hdots, \mathcal{F}_{f(2, \delta)}$ be a partition of $\mathcal{F}$ such that $Y_i = \{K : (H, K) \in \mathcal{F}_i\}$, {  for $i\in \{ 1, \ldots ,f(2,\delta)\}$.} 
Let $\beta_1$ be the clique-free bag for $G$ and let $w_1$ be the weight function on $\beta_1$ from Theorem \ref{thm:clique_free_bag}. By Theorem \ref{thm:clique_free_bag}, $\beta_1$ has no clique cutset and no $(w_1, c, d-2(\delta-1))$-balanced separator, where $w_1(\beta_1) = 1$ and $w_1^{\max} \leq w^{\max} + (\delta-1) 2^{\delta}(1-c)$. If no forcer in $\mathcal{F}$ is active for $\beta_1$, then $k=0$, and the sequence ends. 

Otherwise, assume that there is a forcer in $\mathcal{F}_1$ active for $\beta_1$. Let $X_1 = \{S_K : K \in Y_1\}$ be the set of canonical star separations of $\beta_1$ for centers in $Y_1$. Since $\beta_1$ has no $(w_1, c, d-2(\delta-1))$-balanced separator and $d - 2(\delta-1) \geq 2$, by Lemma \ref{lemma:skewed}, every clique $K$ appears as a center of at most one separation in $X_1$ and every separation in $X_1$ is $(1-c)$-skewed. Since $\beta_1$ has no clique cutset and cliques in $Y_1$ are pairwise anticomplete, and by Lemma \ref{lemma:forcer_intersects_A} the separations in $X_1$ are all proper, it follows from Lemma \ref{lemma:star_sepns_cross} that $X_1$ is laminar.
Since $X_1$ is a laminar collection of star separations of $\beta_1$ and $(1-c) + w_1^{\max}(\delta + \delta^2) \leq (1-c) + [w^{\max} + (\delta-1) 2^{\delta}(1-c)](\delta + \delta^2) < \frac{1}{2}$, by Lemma \ref{lemma:biglemma}, the central bag $\beta_2$ for $X_1$ exists and $\beta_2$ does not have a $(w_{X_1}, c, d - 2\delta)$-balanced separator.  Let $w_2 = w_{X_1}$ be the weight function on $\beta_2$ with respect to $T_{X_1}$, where $T_{X_1}$ is the tree decomposition of $\beta_1$ corresponding to $X_1$. By Lemma \ref{lemma:weight_max}, $w_2(\beta_2) = 1$ and $w_2^{\max} \leq w_1^{\max} + 2^{\delta}(1-c) \leq w^{\max} + \delta2^{\delta}(1-c)$. By Lemma \ref{lemma:not_cutset_in_central_bag}, it follows that no forcer in $\mathcal{F}_1$ is active for $\beta_2$. By Lemma \ref{lemma:central_bag}, $\beta_2$ is connected.

  For $i > 0$, we define $(\beta_{2i+1}, w_{2i+1})$ and $(\beta_{2i+2}, w_{2i+2})$ inductively. For $i \in \{1, \hdots, f(2, \delta)\}$, suppose $(\beta_{2i}, w_{2i})$ are such that $\beta_{2i}$ is connected and has no $(w_{2i}, c, d_{2i})$-balanced separator for $d_{2i} =d - 2i\delta \geq 1$, $w_{2i}(\beta_{2i}) = 1$, and $w_{2i}^{\max} \leq w^{\max} + i\delta2^\delta(1 - c)$. Further, suppose there exists $I_{i} \subseteq \{1, \hdots, f(2, \delta)\}$ such that $i \leq |I_{i}| < f(2, \delta)$, no forcer in $\bigcup_{j \in I_{i}} \mathcal{F}_j$ is active for $\beta_{2i}$, and for all $j \in \{1, \hdots, f(2, \delta)\} \setminus I_{i}$, there is a forcer in $\mathcal{F}_j$ active for $\beta_{2i}$. 

Since $d > 2f(2, \delta)\delta + 2\delta$ and $i < f(2, \delta)$, it follows that $d_{2i} = d - 2i\delta > 2\delta > 2\delta - 2$. Also, since $\beta_{2i}$ has no $(w_{2i}, c, d_{2i})$-balanced separator and $(1-c) + [w_{2i}^{\max} + \delta 2^{\delta} (1-c)](\delta + \delta^2) \leq (1-c) + [w^{\max} + f(2, \delta)\delta 2^{\delta}(1-c)](\delta + \delta^2) < \frac{1}{2}$, the conditions of Theorem \ref{thm:clique_free_bag} for $\beta_{2i}$ are satisfied. Let $\beta_{2i+1}$ be the clique-free bag for $\beta_{2i}$ and let $w_{2i+1}$ be the weight function on $\beta_{2i+1}$ from Theorem \ref{thm:clique_free_bag}. By Theorem \ref{thm:clique_free_bag}, $\beta_{2i+1}$ does not have a $(w_{2i+1}, c, d_{2i}-2(\delta-1))$-balanced separator, where $w_{2i+1}(\beta_{2i+1}) = 1$ and $w_{2i+1}^{\max} \leq w_{2i}^{\max} + (\delta-1) 2^{\delta}(1-c) \leq w^{\max} + i\delta2^{\delta}(1-c) + (\delta - 1)2^{\delta}(1-c)$. Let $d_{2i+1} = d_{2i} - 2(\delta-1)$. If no forcer in $\mathcal{F}$ is active for $\beta_{2i+1}$, then $k=i$, and the sequence ends. Otherwise, let $\sigma_i \in \{1, \hdots, f(2, \delta)\} \setminus I_{i}$ be such that there is a forcer in $\mathcal{F}_{\sigma_i}$ that is active for $\beta_{2i+1}$. Let $X_{\sigma_i} = \{S_K : K \in Y_{\sigma_i}\}$ be the set of canonical star separations of $\beta_{2i+1}$ for centers in $Y_{\sigma_i}$. Since $\beta_{2i+1}$ has no $(w_{2i+1}, c, d_{2i+1})$-balanced separator, by Lemma \ref{lemma:skewed}, every clique $K$ appears as the center of at most one separation in $X_{\sigma_i}$ and every separation in $X_{\sigma_i}$ is $(1 - c)$-skewed. Since $\beta_{2i+1}$ has no clique cutset and cliques in $Y_{\sigma_i}$ are pairwise anticomplete and by Lemma \ref{lemma:forcer_intersects_A} the separations in $Y_{\sigma_i}$ are all proper, it follows from Lemma \ref{lemma:star_sepns_cross} that $X_{\sigma_i}$ is laminar. Finally, $d_{2i+1} > 2$ and, since $i < f(2, \delta)$, $(1-c) + w_{2i+1}^{\max}(\delta + \delta^2) \leq (1-c) + \left[w^{\max} + f(2, \delta)\delta 2^{\delta}(1-c)\right](\delta + \delta^2) < \frac{1}{2},$ so by Lemma \ref{lemma:biglemma}, the central bag $\beta_{2i+2}$ for $X_{\sigma_i}$ exists and $\beta_{2i+2}$ does not have a $(w_{X_{\sigma_i}}, c, d_{2i+2})$-balanced separator, where $d_{2i+2} = d_{2i+1} - 2 = d - 2(i+1)\delta$. Let $w_{2i+2} = w_{X_{\sigma_i}}$ be the weight function on $\beta_{2i+2}$ with respect to $T_{X_{\sigma_i}}$, where $T_{X_{\sigma_i}}$ is the tree decomposition of $\beta_{2i+1}$ corresponding to $X_{\sigma_i}$. By Lemma \ref{lemma:weight_max}, $w_{2i+2}(\beta_{2i+2}) = 1$ and $w_{2i+2}^{\max} \leq w_{2i+1}^{\max} + 2^\delta(1-c) \leq w^{\max} + (i+1)\delta 2^\delta(1 - c)$. By Lemma \ref{lemma:central_bag}, $\beta_{2i+2}$ is connected. Let $I_{i+1}$ be the set of all $j \in \{1, \hdots, f(2, \delta)\}$ such that no forcer in $\mathcal{F}_j$ is active for $\beta_{2i+2}$. Since $\beta_{2i+2} \subseteq \beta_{2i}$ and no forcer in $\bigcup_{j \in I_{i}} \mathcal{F}_j$ is active for $\beta_{2i}$, it follows that no forcer in $\bigcup_{j \in I_{i}} \mathcal{F}_j$ is active for $\beta_{2i+2}$. Further, since $\beta_{2i+2}$ is the central bag for a tree decomposition corresponding to $X_{\sigma_i}$, it follows from Lemma \ref{lemma:not_cutset_in_central_bag} that no forcer in $\mathcal{F}_{\sigma_i}$ is active for $\beta_{2i+2}$. Therefore, $|I_{i+1}| \geq i+1$, and $(\beta_{2i+2}, w_{2i+2})$ satisfies the conditions of the induction. It follows that the sequence $(\beta_1, w_1), \hdots, (\beta_{2k+1}, w_{2k+1})$ is well-defined, $k \leq f(2, \delta)$, $\beta_{2k+1}$ does not have a clique cutset, and no forcer in $\mathcal{F}$ is active for $\beta_{2k+1}$.
 \end{proof}

 We call $(\beta_1, w_1), \hdots, (\beta_{2k+1}, w_{2k+1})$ as in Theorem \ref{thm:bigthm} an \emph{$\mathcal{F}$-decomposition} of $G$, and $\beta_{2k+1}$ the \emph{terminal bag} for $(\beta_1, w_1), \hdots,$ $(\beta_{2k+1}, w_{2k+1})$. A graph $G$ is \emph{clean} if $G$ does not contain a strong forcer. The following theorem shows that if $\mathcal{F}$ is the collection of all strong forcers of $G$ and $\beta_{2k+1}$ is the terminal bag for a $\mathcal{F}$-decomposition, then $\beta_{2k+1}$ is clean. 

\begin{theorem}\label{thm:strong_forcers_clean}
Let $\delta, d$ be positive integers, let $f(2, \delta) = 2(\delta + 1)^2 + 1$, let $c \in [\frac{1}{2}, 1)$, and let $m \in [0, 1]$, with $d > 2f(2, \delta)\delta + 2\delta$, and $(1-c) + \left[m + f(2, \delta)\delta 2^{\delta}(1-c)\right](\delta + \delta^2) < \frac{1}{2}$. Let $G$ be a connected $C_4$-free odd-signable graph with maximum degree $\delta$, let $w:V(G) \to [0, 1]$ be a weight function on $G$ with $w(G) = 1$ and $w^{\max} \leq m $, and suppose $G$ does not have a $(w, c, d)$-balanced separator. Let $\mathcal{F}$ be the set of all strong forcers of $G$, and let $(\beta_1, w_1), \hdots, (\beta_{2k+1}, w_{2k+1})$ be an $\mathcal{F}$-decomposition. Then, the terminal bag $\beta_{2k+1}$ is clean.
\end{theorem}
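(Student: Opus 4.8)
The plan is to read the statement off from the last bullet of Theorem~\ref{thm:bigthm}. Observe first that the hypotheses of the present theorem are precisely those of Theorem~\ref{thm:bigthm} with $\mathcal{F}$ instantiated as the set of all strong forcers of $G$; hence an $\mathcal{F}$-decomposition $(\beta_1,w_1),\dots,(\beta_{2k+1},w_{2k+1})$ exists, and by that theorem no forcer in $\mathcal{F}$ is active for $\beta_{2k+1}$. So the whole task reduces to showing that this already forbids $\beta_{2k+1}$ from \emph{containing} a strong forcer, i.e.\ that any strong forcer of $\beta_{2k+1}$ is a strong forcer of $G$ (hence a member of $\mathcal{F}$) and is active for $\beta_{2k+1}$.

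To make this precise I would first record that $\beta_{2k+1}$ is an induced subgraph of $G$: by the construction in Theorem~\ref{thm:bigthm} it is obtained from $G$ by a finite sequence of clique-free-bag and central-bag operations, each of which returns the subgraph induced by a bag of a tree decomposition, hence an induced subgraph. The key point is then a locality observation: whether a pair $(H,x)$ is a proper wheel, or a short pyramid, is determined entirely by the subgraph induced on $V(H)\cup\{x\}$ — the definitions only refer to $H$ being a hole, to the set $N(x)\cap V(H)$, and to the adjacencies among those vertices (and ``proper'' merely excludes the twin-wheel and short-pyramid cases, which are themselves of this local nature); similarly the clique $K$ attached to a proper wheel forcer or short pyramid forcer is read off from that same induced subgraph. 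Since $\beta_{2k+1}$ is induced in $G$, for any hole $H\subseteq\beta_{2k+1}$ and vertex $x\in\beta_{2k+1}$ the subgraph induced on $V(H)\cup\{x\}$ is the same whether computed in $\beta_{2k+1}$ or in $G$, so $(H,K)$ is a proper wheel forcer (resp.\ short pyramid forcer) of $\beta_{2k+1}$ if and only if it is one of $G$. Consequently, if $\beta_{2k+1}$ contained a strong forcer $F=(H,K)$, then $F$ would be a strong forcer of $G$, so $F\in\mathcal{F}$; and since $H\subseteq\beta_{2k+1}$ and $K\subseteq\beta_{2k+1}$, $F$ would be active for $\beta_{2k+1}$, contradicting Theorem~\ref{thm:bigthm}. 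Hence $\beta_{2k+1}$ contains no strong forcer, i.e.\ it is clean.

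There is essentially no hard step: the only thing requiring care is the locality of strong forcers, and in particular the fact that it is \emph{strong} forcers — not arbitrary forcers — that are in play. A twin wheel forcer additionally demands that its terminal twin wheel be $x_2$-poor, a condition about the (non)existence of a certain path in the \emph{whole} graph, which need not be inherited by an induced subgraph; this is exactly why the theorem is stated for the collection of strong forcers, and it is harmless here since that global side condition never enters the argument.
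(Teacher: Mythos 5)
Your proposal is correct and follows essentially the same route as the paper: assume $\beta_{2k+1}$ contains a strong forcer, note that (since $\beta_{2k+1}$ is an induced subgraph of $G$ and strong forcers are defined locally) it is then a strong forcer of $G$, hence a member of $\mathcal{F}$ that is active for $\beta_{2k+1}$, contradicting Theorem~\ref{thm:bigthm}. Your explicit justification of the locality of proper-wheel and short-pyramid forcers, and the remark about why twin wheel forcers are excluded, simply spell out what the paper's one-line proof leaves implicit.
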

\begin{proof}
Suppose $\beta_{2k+1}$ contains a strong forcer $F = (H, K)$. Then, $F$ is a strong forcer in $G$, so $F \in \mathcal{F}$. By Theorem \ref{thm:bigthm}, it follows that $F$ is not active for $\beta_{2k+1}$, a contradiction.
\end{proof}

\section{Twin wheels in clean graphs}
\label{sec:twin_wheel_clean}

 {In this section we study twin wheels. It turns out that not all twin wheels are
clique star cutset forcers, but some of them (``terminal'' ones) are. 
The goal of this section is to show that the central bag for the 
collection of all twin wheel forcers of a clean graph $G$
does not contain a terminal twin wheel.}

Let $G$ be a clean $C_4$-free odd-signable graph.
The following two lemmas describe the behavior of twin wheels in $G$. Lemma \ref{lemma:u_x_poor} follows from the proof of Lemma 8.4 in \cite{daSilva2013Decomposition2-joins} and Lemma \ref{lemma:paths_shapes} follows from the proof of Theorem 1.5 in \cite{daSilva2013Decomposition2-joins}.
 {  For completeness we include their proofs.}

\begin{lemma}
{\em (\cite{daSilva2013Decomposition2-joins})}
\label{lemma:u_x_poor}
Let $G$ be a clean $C_4$-free odd-signable graph. Let $(H, x)$ be a twin wheel contained in $G$. Let $x_1\dd x_2 \dd x_3$ be the subpath of $H$ such that $N(x) \cap H = \{x_1, x_2, x_3\}$. Suppose there exists a vertex $u \in V(G)$ such that $N(u) \cap (H \cup x) = \{x, x_1, x_1'\}$, where $x_1'$ is the neighbor of $x_1$ in $H \setminus x_2$. Then, $(H, x)$ is $x_2$-poor. 
\end{lemma}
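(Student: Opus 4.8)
The statement to prove is Lemma~\ref{lemma:u_x_poor}: given a clean $C_4$-free odd-signable graph $G$, a twin wheel $(H,x)$ with $N(x)\cap H=\{x_1,x_2,x_3\}$ (so $x_2$ is the clone of $x$), and a vertex $u$ with $N(u)\cap(H\cup x)=\{x,x_1,x_1'\}$ where $x_1'$ is the neighbor of $x_1$ on $H\setminus x_2$, I want to conclude that $(H,x)$ is $x_2$-poor, i.e. there is no path in $G$ from $x_2$ to $V(H)\setminus N[x]$ whose only neighbor of $x$ on it is $x_2$ --- wait, the definition says $x_2$-rich means a path from $x_2$ to $V(H)\setminus N[x]$ containing no neighbors of $x$ other than $x_2$; so $x_2$-poor means every such attempted path must hit a neighbor of $x$ besides $x_2$. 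Here is the idea of the proof.

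\begin{proof}
Let $x_3'$ be the neighbor of $x_3$ in $H\setminus x_2$, and note that since $G$ is $C_4$-free, $x_1'\neq x_3$, $x_3'\neq x_1$, and $x_1'\neq x_3'$ (as $|H|\geq 5$, because a twin wheel on a $4$-hole would force a $C_4$ or a smaller configuration). Let $Q$ be the sector of $(H,x)$ from $x_1$ to $x_3$ not through $x_2$, so $Q$ is a path with interior $Q^*=V(H)\setminus\{x_1,x_2,x_3\}=V(H)\setminus N[x]$, $x_1'$ is the neighbor of $x_1$ on $Q$, and $x_3'$ is the neighbor of $x_3$ on $Q$. Suppose for contradiction that $(H,x)$ is $x_2$-rich, and let $R$ be a shortest path in $G$ from $x_2$ to $Q^*$ containing no neighbor of $x$ other than $x_2$; write $R=x_2\dd r_1\dd\cdots\dd r_t$ with $r_t\in Q^*$ and $r_1,\ldots,r_t$ not adjacent to $x$ (and by minimality $r_1,\ldots,r_{t-1}\notin V(H)$, and $r_t$ is the unique vertex of $R$ on $H$).

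\textbf{Step 1: build a hole or wheel using $R$ and $Q$.} Consider the cycle obtained by going from $x_2$ along $R$ to $r_t\in Q^*$, then along $Q^*$ back to one end, then through $x_1$ or $x_3$ back to $x_2$. Using $C_4$-freeness and the fact that $R^*\setminus\{r_t\}$ is anticomplete to $H\setminus x_2$ except possibly near $r_t$, one extracts an induced hole $H'$ through $x_2$ that avoids $x$ but whose neighborhood of $x$ is examined: $x$ is adjacent to $x_1,x_2,x_3$, and at most $x_1$ and $x_3$ can lie on $H'$. If $x$ has exactly the neighbors $x_1,x_2,x_3$ on $H'$ then $(H',x)$ is a twin wheel or short pyramid depending on adjacencies; the relevant point is to produce a \emph{strong} forcer (proper wheel or short pyramid) or a theta/prism, contradicting cleanness or odd-signability (Theorem~\ref{os}). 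The vertex $u$ is the key extra gadget here: since $N(u)\cap(H\cup x)=\{x,x_1,x_1'\}$, the vertex $u$ together with $x$ gives two internally disjoint paths between $x_1$ and $x_1'$ (namely $x_1\dd x\dd\cdots$ and $x_1\dd u\dd x_1'$ --- actually $x_1\dd u\dd x_1'$ of length $2$, and $x_1\dd x_1'$ of length $1$), so $\{x_1,u,x_1'\}$ and $\{x_1,x_1'\}$ and the hole $H$ interact to force extra structure.

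\textbf{Step 2: exploit $u$ to get a forbidden configuration.} The heart of the argument (following the proof of Lemma~8.4 in \cite{daSilva2013Decomposition2-joins}): analyze where the path $R$ first meets $N[u]$ or $Q$. Either $R$ avoids $N(u)$, in which case $x_2\dd R\dd r_t$ together with an appropriate subpath of $Q$ from $r_t$ to $x_1$, together with $u$, yields a theta or a prism with apexes among $\{x_1,x_2,u\}$ and a triangle $\{x,x_1,u\}$ or $\{x_1,x_2\}$-structure --- in any case a prism or theta, contradicting Theorem~\ref{os}; or $R$ does meet $N(u)$, in which case a shorter path from $x_2$ to $Q^*$ through $u$ and $x_1'$ can be built unless it is blocked, and chasing this down produces either an even wheel centered at $x$ on a hole through $x_2,r_t$ and part of $Q$, or a proper wheel/short pyramid, again a contradiction with cleanness or with Theorem~\ref{os}. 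In every case we reach a contradiction, so no such path $R$ exists, i.e. $(H,x)$ is $x_2$-poor.
\end{proof}

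\textbf{Where the difficulty lies.} The routine part is the case bookkeeping; the genuinely delicate part is Step~2 --- correctly enumerating how the connecting path $R$ can attach to $Q$, to $u$, and to $\{x_1,x_3\}$, and checking in each case that the resulting $3$-path configuration is actually a theta, prism, short pyramid, or even wheel (and hence forbidden). The role of the hypothesis on $u$ is precisely to close off the one case that would otherwise survive (where the natural configuration built from $R$ and $Q$ alone is a harmless twin wheel rather than a strong forcer): $u$ upgrades it to a theta or prism. I would follow the structure of the proof of Lemma~8.4 in \cite{daSilva2013Decomposition2-joins} closely, since the authors explicitly say this lemma follows from that proof; the main task is to transcribe and adapt that argument to the present notation, being careful that $C_4$-freeness is used wherever the original used even-hole-freeness only through the absence of $C_4$.
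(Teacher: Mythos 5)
Your proposal correctly identifies the overall strategy (assume $(H,x)$ is $x_2$-rich, take a minimal path $R$ from $x_2$ to the interior of the long sector avoiding $N[x]\setminus\{x_2\}$, and derive a forbidden configuration using the extra vertex $u$), which is indeed the skeleton of the paper's argument. But what you have written is a plan, not a proof: the entire content of the lemma lives in the case analysis you label ``Step 2'' and explicitly decline to carry out, deferring instead to the proof of Lemma~8.4 of the cited reference (``the main task is to transcribe and adapt that argument''). The paper does transcribe and adapt it, and that adaptation is the proof: one must show (1) that both $u$ and $x_1$ have neighbors on $R$ (if $u$ has none, the hole through $x_2$, $R$, part of the long sector, $u$ and $x$ makes $x_1$ the center of a proper wheel, contradicting cleanness; if $x_1$ has none, cleanness forces $(H',u)$ to be a twin wheel, which pins down the attachment of $R$ and leads, via several subcases, to a proper wheel centered at $u$ or at $x$), then (2) that $x_3$ has no neighbor on $R$, and finally that comparing the first neighbors of $x_1$ and $u$ along $R$ yields a theta, proper wheel, or short pyramid. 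None of this enumeration appears in your write-up, so the lemma is not established.

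Beyond the omission, the specific configurations you do name in Step~2 are not substantiated and do not match what actually arises. For instance, in the case where $R$ avoids $N(u)$ you claim a theta or prism contradicting Theorem~\ref{os}; what one actually gets there is a wheel with at least four neighbors of $x_1$ on a hole through $x_2$, $R$, the sector, $u$, and $x$, i.e.\ a proper wheel contradicting cleanness, and establishing even that much requires checking that the cycle in question is induced (using the minimality of $R$ and the hypothesis on $N(u)$). Likewise, prisms and even wheels, which you invoke, do not actually occur in the argument: all contradictions in the correct proof come from thetas (forbidden by odd-signability) and from proper wheels or short pyramids (forbidden by cleanness). So the proposal has the right high-level approach but a genuine gap: the case analysis that constitutes the proof is missing, and the sketched shortcuts through it are not accurate as stated.
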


{ 
\begin{proof}
Let $x_1 \dd p_1 \dd \hdots \dd p_k \dd x_3$ be the long sector of $(H,x)$, and let $P=p_1 \dd \hdots \dd p_k$. Suppose that $(H,x)$ is $x_2$-rich.
Then there exists a path $Q=q_1\dd \hdots \dd q_l$ in $G\setminus (N[x]\setminus \{ x_2\})$ from $x_2$ to $P$.
We may assume that $Q$ is chosen to be the minimal such path. Then, $q_l$ has a neighbor in $P$, $x_1$ and $x_3$ are the only nodes of $H$ that may have a neighbor 
in $Q\setminus q_l$, $x_2$ is adjacent to $q_1$, and $x_2$ does not have a neighbor in $Q\setminus q_1$.
Let $p_i$ (resp. $p_{i'}$) be the neighbor of $q_l$ in $P$ with lowest (resp. highest) index. 
\\
\\
{\em (1) Both $u$ and $x_1$ have a neighbor in $Q$.}

$N(u)\cap Q\neq \emptyset$, else $Q\cup \{ p_1, \ldots ,p_i,x_1,x_2,u,x\}$ induces a proper wheel with center $x_1$, contradicting the assumption that $G$ is clean.
Now suppose that $N(x_1)\cap Q=\emptyset$. Let $H'$ be the hole induced by $Q\cup \{ p_1,\ldots ,p_i,x_1,x_2\}$.
Since $G$ is clean, $(H',u)$ is a twin wheel, and hence $i=1$ and $N(u)\cap Q=\{ q_l\}$.
Since $\{ u,x,x_3,q_l\}$ cannot induce a $C_4$, $x_3q_l$ is not an edge.
 Since $\{ u,x,x_2,q_1\}$ cannot induce a $C_4$, $l>1$.
 Suppose $i'=1$. If $N(x_3)\cap Q=\emptyset$, then $Q\cup H$ induces a theta. So $N(x_3)\cap Q\neq \emptyset$. 
 Let $q_s$ be the node of $N(x_3)\cap Q$ with highest index.
 Then $\{ q_s, \ldots q_l,p_1,x_1,x,x_3,u\}$ induces a proper wheel with center $u$, a contradiction.
 So $i'>1$. But then $\{ q_l,p_{i'},\ldots , p_k, u,x_1,x_2,x_3,x\}$ induces a proper wheel with center $x$, a contradiction.
 This proves (1).
 \\
 \\
 {\em (2) $N(x_3)\cap Q=\emptyset$.}
 
 Suppose $x_3$ has a neighbor in $Q$. By (1), let $q_s$ (resp. $q_t$) be the node of $Q$ with the lowest index adjacent to $x_1$ (resp. $u$).
 If $s\leq t$, then $\{ q_1, \ldots ,q_t,u,x,x_1,x_2\}$ induces a proper wheel with center $x_1$.
 So $s>t$. In particular, $t<l$ and $s>1$.
 If $x_3$ has a neighbor in $Q\setminus q_l$, then $(Q\setminus q_l) \cup P \cup \{ u,x,x_3\}$ contains a theta between $u$ and $x_3$.
 So $x_3$ has no neighbor in $Q\setminus q_l$, and hence $N(x_3)\cap Q=\{ q_l\}$.
 Let $H'$ be the hole induced by $Q\cup \{ x_2,x_3\}$. Since $H'\cup x_1$ cannot induce a theta, $(H',x_1)$ is a wheel.
 Since $s>1$, $(H',x_1)$ is a proper wheel or a short pyramid, contradicting that $G$ is clean.
 This proves (2).
 
 \vspace{2ex}
 
 By (1), let $q_s$ (resp. $q_t$) be the node of $Q$ with lowest index adjacent to $x_1$ (resp. $u$).
 If $s=1$ then $\{ q_1, \ldots ,q_t,x,x_2,x_1,u\}$ induces a proper wheel with center $x_1$, a contradiction.
 So $s>1$. By (2), $Q\cup \{ p_{i'},\ldots ,p_k,x_2,x_3\}$ induces a hole $H'$. But then, since $s>1$, either $H'\cup x_1$ induces a theta,
 or $(H',x_1)$ is  a proper wheel or a short pyramid, a contradiction.

\end{proof}

\begin{lemma}
{\em (\cite{daSilva2013Decomposition2-joins})}
%\label{lemma:clean_twin_wheels}
\label{lemma:paths_shapes}
Let $G$ be a clean $C_4$-free odd-signable graph. Let $(H, x)$ be a twin wheel contained in $G$, let $N(x) \cap H = \{x_1, x_2, x_3\}$, where $x_2$ is the clone of $x$ in $H$, and suppose $(H, x, x_2)$ is not a terminal twin wheel. Then, there exists a path $P = p_1 \dd \hdots \dd p_k$ in $G \setminus (H \cup x)$ such that $N(p_1) \cap (H \cup x) = \{x\}$, $N(p_k) \cap (H \cup x)$ is an edge of $H \setminus \{x_1, x_2, x_3\}$, and $P^*$ is anticomplete to $H \cup x$. Similarly, there exists a path $Q = q_1 \dd \hdots \dd q_j$ in $G \setminus (H \cup x)$ such that $N(q_1) \cap (H \cup x) = \{x_2\}$, $N(q_j) \cap (H \cup x)$ is an edge of $H \setminus \{x_1, x_2, x_3\}$, and $Q^*$ is anticomplete to $H \cup x$. 
\end{lemma}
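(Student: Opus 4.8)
The plan is first to reduce the statement to the construction of the path $P$ alone. Set $H^- = (H \setminus \{x_2\}) \cup \{x\}$; as noted earlier $(H^-, x_2)$ is again a twin wheel, and its clone in $H^-$ is $x$, since $N(x_2) \cap H^- = \{x_1, x, x_3\}$ and this is a path of length $2$ in $H^-$. Moreover $H^- \cup \{x_2\} = H \cup \{x\}$ and $H^- \setminus \{x_1, x, x_3\} = H \setminus \{x_1, x_2, x_3\}$, and unwinding the definitions shows that ``$(H^-, x_2)$ is $x$-rich'' means exactly ``$(H,x)$ is $x$-rich'' and ``$(H^-, x_2)$ is $x_2$-rich'' means exactly ``$(H,x)$ is $x_2$-rich''. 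Hence $(H^-, x_2, x)$ is a terminal twin wheel if and only if $(H, x, x_2)$ is, so the assertion about $Q$ is precisely the assertion about $P$ applied to the twin wheel $(H^-, x_2)$. It therefore suffices to produce $P$.

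I would record two preliminary facts. First, $|V(H)| \ge 5$: otherwise $H = x_1 \dd x_2 \dd x_3 \dd r_1$ and $\{x, x_1, r_1, x_3\}$ induces a $C_4$. Hence the unique long sector of $(H,x)$ is $S = x_1 \dd r_1 \dd \hdots \dd r_m \dd x_3$ with $m \ge 2$, so $S^* = \{r_1, \hdots, r_m\}$ induces a path with at least one edge, and $N(x_2) \cap V(H) = \{x_1, x_3\}$. Second --- this is the main tool --- since $G$ is clean and $C_4$-free odd-signable, every vertex $v \notin V(H)$ with a neighbour in $H$ attaches to $H$ in exactly one of three ways: $N(v) \cap V(H)$ is a single vertex, an edge of $H$, or a path of length $2$ in $H$ (so that $(H,v)$ is a twin wheel). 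Indeed, two non-adjacent neighbours of $v$ in $H$ would, together with the two $H$-paths joining them, form a theta, which is forbidden by Theorem \ref{os}; and if $|N(v) \cap V(H)| \ge 3$ then $(H,v)$ is a wheel, hence --- being neither a proper wheel nor a short pyramid, since each of those yields a strong forcer and $G$ is clean --- a twin wheel.

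Next I would use that $(H,x)$ is $x$-rich: fix an induced path $R = x \dd a_1 \dd \hdots \dd a_\ell$ from $x$ to $S^*$ containing no neighbour of $x_2$ other than $x$, with $\ell$ minimum. Minimality yields: $\ell \ge 2$ (since $x$ has no neighbour in $S^*$); $a_\ell \in S^*$, say $a_\ell = r_j$; and $a_1, \hdots, a_{\ell-1} \notin V(H)$ --- a vertex of $R$ other than $x$ cannot lie in $N(x_2) \supseteq \{x_1, x_3\}$, the vertex $x_2$ cannot lie on $R$ since then its $R$-neighbour would be a neighbour of $x_2$ on $R$, and if some $a_i$ with $i < \ell$ were in $S^*$ then $x \dd a_1 \dd \hdots \dd a_i$ would be a shorter such path. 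Also no $a_i$ is adjacent to $x_2$, and, by a truncate-and-reattach argument, no $a_i$ with $i \le \ell - 2$ has a neighbour in $S^*$. Thus $a_1 \dd \hdots \dd a_{\ell-1}$ is an induced path in $G \setminus (V(H) \cup \{x\})$ whose first vertex is adjacent to $x$, whose last vertex is adjacent to $r_j \in S^*$, and whose interior is anticomplete to $S^* \cup \{x_2\}$ and to $x$.

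The heart of the proof is to turn this path into $P$: one must arrange --- possibly after refining the choice of $R$, and treating small $\ell$ (notably $\ell = 2$) directly --- that the first vertex attaches to $H \cup \{x\}$ only at $x$, the last vertex attaches to $H \cup \{x\}$ exactly along an edge of $S^*$, and the interior has no neighbour in $H \cup \{x\}$. By the trichotomy each endpoint attaches to $H$ in a single vertex, an edge, or a path of length $2$; the remaining cases are eliminated or rerouted using $C_4$-freeness --- for instance a vertex adjacent to $x$ and to $r_1$ but not to $x_1$ induces, together with $x$, $x_1$, $r_1$, a $C_4$, and symmetrically for $r_m$ and $x_3$ --- and using Lemma \ref{lemma:u_x_poor} together with its variants obtained by interchanging $x_1$ with $x_3$ and $x$ with $x_2$: a vertex $u$ with $N(u) \cap (H \cup \{x\}) = \{x, x_1, r_1\}$ would make $(H,x)$ be $x_2$-poor, contradicting that $(H, x, x_2)$ is not terminal, while a path-of-length-$2$ attachment that survives the $C_4$ test yields a smaller twin wheel against which one reroutes. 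I expect this final case analysis --- cleaning the two ends and the interior of the path, and the small-$\ell$ cases --- to be the only genuine obstacle; everything else is bookkeeping.
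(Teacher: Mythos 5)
Your reduction of the $Q$-statement to the $P$-statement via the twin wheel $(H\setminus\{x_2\})\cup\{x\}$ is correct (the paper does the same thing, phrased as ``by symmetry between $x$ and $x_2$''), and your preliminary observations (attachment trichotomy in a clean theta-free graph, minimal path certifying $x$-richness) match the paper's setup. However, what you label ``the heart of the proof'' and ``the only genuine obstacle'' is in fact the bulk of the paper's argument, and you have not supplied it. The proposal stops exactly where the real work begins: showing that $x_1$ and $x_3$ have no neighbours on the path, and that the last vertex's neighbourhood in the long sector is a single edge.

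Concretely, the paper's proof of this step (its claim that $\{x_1,x_3\}$ is anticomplete to $P$) is not a routine $C_4$/theta case check. When, say, $x_3$ attaches to the path, one must build an auxiliary hole $H''$ from the path together with $x_2,x_3$ and part of the long sector, observe that $(H'',x)$ is again a twin wheel in which the first path vertex $p_1$ has exactly the attachment $\{x,x_3,x_3'\}$ forbidden by Lemma~\ref{lemma:u_x_poor} --- and then, crucially, use the $x_2$-richness of $(H,x)$ (i.e.\ the other half of the non-terminality hypothesis) to exhibit a path showing that $(H'',x)$ is itself $x_2$-rich, which is what contradicts Lemma~\ref{lemma:u_x_poor}. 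Your sketch (``a path-of-length-2 attachment that survives the $C_4$ test yields a smaller twin wheel against which one reroutes'') does not identify this richness-transfer mechanism, and indeed nowhere in your construction of $P$ do you use that $(H,x)$ is $x_2$-rich --- yet without it the claim is false in spirit: Lemma~\ref{lemma:u_x_poor} only applies to wheels that are rich on the appropriate side, so one cannot invoke it against the new wheel for free. The endgame (ruling out $\ell=2$, i.e.\ a single outside vertex, via a theta or strong forcer in $(H\setminus x_2)\cup P\cup x$, and showing the two extreme neighbours of the last vertex in the sector are adjacent, again via theta-freeness) is also only gestured at. As it stands the proposal is a correct frame with the central argument missing, so it cannot be accepted as a proof.
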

\begin{proof} 
  Since $(H,x)$ is not terminal, it follows that  $(H,x)$ is $x$-rich and
  $x_2$-rich. 
  Let $x_1\dd q_1 \dd \hdots \dd q_l\dd x_3$ be the long sector of $(H,x)$,
  and let $Q$ be the path $q_1 \dd \dots \dd q_l$.
Then by Lemma \ref{lemma:u_x_poor}, there does not exist a node $u$ such that $N(u)\cap (H\cup x)=\{ x,x_1,q_1\}$, and by symmetry,
there does not exist a node $u$ such that $N(u)\cap (H\cup x)=\{ x,x_3,q_l\}$.
Since $(H,x)$ is $x$-rich, there exists a path $P=p_1\dd \hdots \dd p_k$ in $G\setminus (N[x_2]\setminus \{ x\} )$ from $x$ to $Q$.
We may assume that $P$ is chosen to be the minimal such path.
Then, $p_k$ has a neighbor in $Q$, $x_1$ and $x_3$ are the only nodes of $H$ that may have a neighbor in $P\setminus p_k$, and $N(p_1)\cap (H\cup x)=\{ x\}$.
Let $q_i$ (resp. $q_{i'}$) be the neighbor of  $p_k$ in $Q$ with lowest (resp. highest) index.
\\
\\
{\em (1) $\{x_1,x_3\}$ is anticomplete to $P$.}

Suppose that one of $x_1,x_3$  has  a neighbor in $P$. Since
$\{x_2,x_2,x_3,p_k\}$ does not induce a $C_4$, 
not both $x_1,x_3$ are adjacent to  $p_k$. Since $H\cup (P\setminus p_k)$ does
not contain a theta between $x_1$ and $x_3$, it follows that at least one of 
$x_1,x_3$ is anticomplete to $P \setminus p_k$. It follows (exchanging the
roles of $x_1,x_3$ if necessary) that we
may assume that $x_3$ has a neighbor in $P$, and $x_1$ is anticomplete
to $P \setminus p_k$.

Since $\{x_1,p_1,x_3,x_2\}$ does
not induce a $C_4$, it follows that if $k=1$, then $x_1$ is non-adjacent
to $p_k$. Consequently,   $P\cup \{ x_1,x,q_1,\ldots ,q_i\}$ induces a hole $H'$.
Since $H'\cup x_3$ does not induce a theta or a strong forcer,
$x_3$ is adjacent to $p_1$ and
$N(x_3) \cap H' \subseteq N(p_1) \cap H'$.
If $N(x_3)\cap H'=\{ x, p_1\}$, then $H'\cup \{ x_2,x_3\}$ induces a proper wheel with center $x$.
So $N(x_3)\cap H'=N(p_1) \cap H'$. 

Let $H''$ be the hole induced by $(H'\setminus \{x,p_1\})\cup \{ x_2,x_3\}$. Then $(H'',x)$ is a twin wheel, and $N(p_1)\cap (H''\cup x)=\{ x,x_3,x_3'\}$,
where $x_3'$ is the neighbor of $x_3$ in $H'' \setminus x_2$.
Since $(H,x)$ is $x_2$-rich, there is a path $R$ in $G\setminus (N[x]\setminus x_2)$ from $x_2$ to $Q$.  It follows $R\cup \{ q_{i'}, \ldots ,q_l\}$ contains a path showing that $(H'',x)$ is  $x_2$-rich. But Lemma \ref{lemma:u_x_poor} (with $p_1$ playing the role of $u$) implies that  $(H'',x)$ is $x_2$-poor, a
contradiction. This proves (1).

\vspace{2ex}

If $k=1$ then (since by (1) $\{ x_1,x_3\}$ is anticomplete to $P$) $(H\setminus x_2)\cup P\cup x$ induces a theta or a strong forcer.
So $k>1$. If $i=i'$ or $p_ip_{i'}$ is not an edge, then the graph induced by $(H\setminus x_2)\cup P\cup x$ contains a theta between $x$ and either $p_k$ (when $i\neq i'$)
or $p_i$ (when $i=i'$). So  $p_ip_{i'}$ is an edge.
By symmetry between $x$ and $x_2$, the result follows..

\end{proof}

We now use \ref{lemma:paths_shapes} to show that twin wheel forcers can be used
in a way similar to strong forcers.}
\begin{theorem}
Let $\delta, d$ be positive integers, let $f(2, \delta) = 2(\delta + 1)^2 + 1$, let $c \in [\frac{1}{2}, 1)$, and let $m \in [0, 1]$, with $d > 2f(2, \delta)\delta + 2\delta$ and $(1-c) + [m + f(2, \delta)\delta 2^{\delta}(1-c)](\delta + \delta^2) < \frac{1}{2}$. Let $G$ be a connected clean $C_4$-free odd-signable graph with maximum degree $\delta$,  {let $w:V(G) \to [0, 1]$ be a weight function on $G$ with $w(G) = 1$ and $w^{\max} \leq m$}, and suppose $G$ does not have a $(w, c, d)$-balanced separator. Let $\mathcal{T}$ be the set of all twin wheel forcers in $G$ and let $(\beta_1, w_1), \hdots, (\beta_{2k+1}, w_{2k+1})$ be a $\mathcal{T}$-decomposition of $G$. Then, $\beta_{2k+1}$ does not contain a terminal twin wheel.
\label{thm:no_twin_wheel}
\end{theorem}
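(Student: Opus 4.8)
The plan is to argue by contradiction. Suppose $\beta:=\beta_{2k+1}$ contains a terminal twin wheel $(H,x,x_2)$, where $x_2$ is the clone of $x$. Since $\bigl((H\setminus x_2)\cup x,\,x_2,\,x\bigr)$ is again a twin wheel (its first entry a hole of $\beta$, its second the new centre, its third the new clone), and for it ``$x$-poor'' means precisely ``$x$-poor for $(H,x)$'', we may, after this replacement, assume that $(H,x)$ is $x_2$-poor in $\beta$. Consider first the easy case: if $(H,x)$ is $x_2$-poor in $G$, or $x$-poor in $G$, then --- performing the above replacement in the latter case --- one of $(H,\{x\})$, $\bigl((H\setminus x_2)\cup x,\{x_2\}\bigr)$ is a twin wheel forcer of $G$ and hence belongs to $\mathcal T$; its hole together with its centre is contained in $\beta$, so this forcer is active for $\beta$, contradicting Theorem~\ref{thm:bigthm}. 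Therefore $(H,x)$ is both $x$-rich and $x_2$-rich in $G$, i.e.\ $(H,x,x_2)$ is \emph{not} terminal in $G$.

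It remains to derive a contradiction from the fact that $(H,x,x_2)$ is terminal in $\beta$ but not terminal in $G$. I would do this by showing that non-terminality of a twin wheel is preserved all the way down the chain $G=\beta_0\supseteq\beta_1\supseteq\cdots\supseteq\beta_{2k+1}$: by induction on $j$, \emph{every twin wheel contained in $\beta_j$ that is non-terminal in $G$ is non-terminal in $\beta_j$}; taking $j=2k+1$ contradicts the choice of $(H,x,x_2)$. The base case $j=0$ is vacuous. For the step, given a twin wheel $W$ contained in $\beta_{j+1}$ and non-terminal in $G$, we have $W\subseteq\beta_{j+1}\subseteq\beta_j$, so by induction $W$ is non-terminal in $\beta_j$; hence the step reduces to the claim: \emph{if a twin wheel $(H,x,x_2)$ contained in $\beta_{j+1}$ is non-terminal in $\beta_j$, then it is non-terminal in $\beta_{j+1}$}. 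As $\beta_j$ is an induced subgraph of $G$, it is clean, $C_4$-free and odd-signable, so Lemma~\ref{lemma:paths_shapes} applies in $\beta_j$: it yields induced paths $P$ and $Q$ in $\beta_j\setminus(H\cup x)$, emanating from $x$ and from $x_2$, with interiors anticomplete to $H\cup x$ and with the far end of each attached to an edge of $H\setminus\{x_1,x_2,x_3\}$. Adjoining to $P$ (resp.\ $Q$) its attachment vertex of $H$ at one end and $x$ (resp.\ $x_2$) at the other produces induced paths $\widehat P,\widehat Q$ witnessing that $(H,x)$ is $x$-rich and $x_2$-rich \emph{in $\beta_j$}, whose endpoints all lie in $H\cup\{x\}\subseteq\beta_{j+1}$. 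It therefore suffices to show $\widehat P,\widehat Q\subseteq\beta_{j+1}$.

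Now $\beta_{j+1}$ is obtained from $\beta_j$ either as a clique-free bag (a finite iteration of central bags for collections of minimal clique separations) or as a central bag for a laminar collection $X$ of canonical, proper star separations of $\beta_j$ whose centres are singletons. In both cases Lemma~\ref{lemma:central_bag} gives that each vertex of $\beta_j\setminus\beta_{j+1}$ lies on the $A$-side $A_S$ of a separation $S=(A_S,C_S,B_S)$ from the relevant collection, with $\beta_{j+1}\subseteq C_S\cup B_S$. If $\widehat Q$ (say) met such an $A_S$, then, since both endpoints of $\widehat Q$ lie in $C_S\cup B_S$ and $A_S$ is anticomplete to $B_S$, $\widehat Q$ would contain a maximal subpath inside $A_S$ whose two $\widehat Q$-neighbours $c_1,c_2$ lie in $C_S$ and are non-consecutive on $\widehat Q$. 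When $C_S$ is a clique (the clique-separation steps) this forces the chord $c_1c_2$ in the induced path $\widehat Q$, which is impossible; hence in the clique-free-bag case $\widehat P,\widehat Q\subseteq\beta_{j+1}$, as wanted.

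The remaining case --- $S$ a canonical star separation with centre $y:=y_S$ and $C_S\subseteq N[y]$ --- is the crux, and is precisely where Lemmas~\ref{lemma:u_x_poor} and~\ref{lemma:paths_shapes} earn their keep. Here $c_1$ and $c_2$ need not be adjacent, but they are distinct neighbours of $y$ with $c_1\not\sim c_2$ and $c_1,c_2\ne y$, and one checks $y\notin\widehat Q$ (no vertex of an induced path is adjacent to two of its non-consecutive vertices). I would then run a case analysis on the maximal subpath $R\subseteq A_S$ of $\widehat Q$ and on $N(y)\cap R$: $C_4$-freeness kills the shortest configurations (e.g.\ $R$ a single vertex anticomplete to $y$ produces a $C_4$ on $\{y,c_1,c_2\}\cup R$), and in the remaining cases one combines the hole built from $y,c_1,R,c_2$ (or a sub-hole) with the twin wheel forcer $(H_y,y)$ present at $y$ --- whose hole also meets $A_S$ by Lemma~\ref{lemma:forcer_intersects_A} --- and with the location of $x_1,x_2,x$ relative to $y$, to exhibit a theta, a prism, an even wheel, a proper wheel, or a short pyramid in $\beta_j$ (contradicting that $\beta_j$ is clean and odd-signable), or else a twin wheel forcer of $G$ active for $\beta_j$ (contradicting Theorem~\ref{thm:bigthm}); Lemma~\ref{lemma:u_x_poor} supplies the control over which side of the twin wheel is ``poor''. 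Carrying this out --- showing that the canonical star separation of a twin-wheel-forcer centre cannot sever the very ``clean'' paths produced by Lemma~\ref{lemma:paths_shapes} --- is the main obstacle, and is where essentially all of the remaining work lies.
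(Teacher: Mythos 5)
Your overall architecture matches the paper's: reduce to a single-step statement (terminality lifts from $\beta_{j+1}$ to $\beta_j$, equivalently non-terminality descends), invoke Lemma~\ref{lemma:paths_shapes} in the larger bag to obtain the two paths $P,Q$, dispose of the clique-cutset steps easily (your chord argument is fine and is essentially the paper's observation that $H\cup x\cup P$ has no clique cutset), and reduce everything to the case where a star separation of an even step severs one of the paths. The initial reduction via the clone swap (so that an $x$-poor or $x_2$-poor wheel in $G$ yields an active twin wheel forcer, contradicting Theorem~\ref{thm:bigthm}) is also correct.

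The genuine gap is that the star-separation case --- which is the entire substance of the theorem --- is not proved but only announced (``I would then run a case analysis\dots'', ``is where essentially all of the remaining work lies''). In the paper this case occupies the bulk of the proof: one shows that the star center $v\in\beta_i$ bridging the missing segment of $P$ lies outside $H$, is adjacent to $x$, that cleanness and theta-freeness force $(H',v)$ to be a twin wheel on the rerouted hole $H'$, that the only surviving configuration is $N(v)\cap H'=\{p_{\ell-1},p_\ell,p_{\ell+1}\}$ with $p_{\ell-1}=x_2$ (a short-pyramid argument eliminates the alternative), and finally one builds a third hole $H'''$ on which $(H''',x_2)$ is an $x$-rich twin wheel while $p_1$ contradicts Lemma~\ref{lemma:u_x_poor}. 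None of these ideas appears in your sketch. Moreover, your framing makes the missing step harder than it needs to be: you aim to show that $\widehat P$ and $\widehat Q$ themselves survive into $\beta_{j+1}$ assuming only non-terminality in $\beta_j$, whereas the paper proves the contrapositive and uses the terminality of the wheel in the \emph{smaller} bag in an essential way --- e.g., the absence in $\beta_i$ of any path from $x_2$ to $H\setminus\{x_1,x_2,x_3\}$ avoiding $N(x)$ is exactly what forces $v$ to be adjacent to $x$, and is also what rules out the configuration in which $P$ reconnects to $H\setminus N[v]$ and hence lies in $\beta_i$ after all. With your formulation that hypothesis is unavailable, and it is not clear (and not established by the paper's method) that the paths must literally be preserved rather than merely replaceable by other richness witnesses. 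Finally, your proposed ingredients for the case analysis (using the forcer hole at the separation center $y$ together with Lemma~\ref{lemma:forcer_intersects_A}) are not the ones that make the argument work; what is actually needed is the interplay between the star center, the rerouted holes, cleanness, and Lemma~\ref{lemma:u_x_poor} as above. As it stands, the proposal is an outline whose central case remains open.
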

\begin{proof}
Let $\beta_0 = G$. \\

\noindent \emph{(1) For $i \in \{1, \hdots, 2k+1\}$, if $(H, x, x_2)$ is a terminal twin wheel in $\beta_{i}$, then $(H, x, x_2)$ is a terminal twin wheel in $\beta_{i-1}$.} 

 Let $(H, x, x_2)$ be a terminal wheel in $\beta_{i}$, with $N(x) \cap H = \{x_1, x_2, x_3\}$, and suppose $(H, x, x_2)$ is not a terminal wheel in $\beta_{i-1}$. Since $(H, x, x_2)$ is not a terminal twin wheel in $\beta_{i-1}$, by Lemma \ref{lemma:paths_shapes} there exists a path $P = p_1 \dd \hdots \dd p_m$ in $\beta_{i-1}$ such that $N(p_1) \cap (H \cup x) = \{x_2\}$, $N(p_m) \cap (H \cup x)$ is an edge of $H \setminus \{x_1, x_2, x_3\}$, and $P^*$ is anticomplete to $H \cup x$. Similarly, there exists a path 
 $Q = q_1 \dd \hdots \dd {  q_t}$ in $\beta_{i-1}$ such that $N(q_1) \cap (H \cup x) = \{x\}$, $N({  q_t}) \cap (H \cup x)$ is an edge of $H \setminus \{x_1, x_2, x_3\}$, and $Q^*$ is anticomplete to $H \cup x$. Since $(H, x, x_2)$ is a terminal twin wheel in $\beta_i$, we may assume that $V(P) \not \subseteq V(\beta_i)$. 
 {  If $i$ is odd, then by the definition of $\mathcal{T}$-decomposition, $\beta_i$ is the clique-free bag of $\beta_{i-1}$. By the definition of the clique-free bag,
 it follows that $\beta_i$ is an induced subgraph of $\beta_{i-1}$ obtained by decomposing $\beta_{i-1}$ with clique cutsets. Since
 $H \cup x \cup P$ does not have a clique cutset, it follows that $H \cup x \cup P$ is contained in $\beta_i$, a contradiction. Therefore,
 $i$ is even, and so by the definition of $\mathcal{T}$-decomposition,} $\beta_i$ is the central bag for a tree decomposition corresponding to a laminar collection of proper star separations in $\beta_{i-1}$. Let $p_0 = x_2$ and let $p_{m+1}$ be a neighbor of $p_m$ in $H$. Let $\ell \in \{1, \hdots, m\}$ and $j \in \{1, \hdots, m+1\}$ be such that $\ell < j$, $p_{\ell-1}, p_j \in \beta_i$, and ${  p_s} \not \in \beta_i$ for $\ell \leq {  s} < j$. It follows that $p_{\ell-1}$ and $p_j$ have neighbors in a connected component of $\beta_{i-1} \setminus \beta_i$. Since $\beta_i$ is the central bag for a tree decomposition corresponding to a collection of star separations in $\beta_{i-1}$, it follows that $p_{\ell-1}$ and $p_j$ are in a star cutset of $\beta_{i-1}$. In particular, there exists $v \in \beta_i$ such that $p_{\ell-1}, p_j \in N[v]$.  Since $P^*$ is anticomplete to $H \cup x$, it follows that $v \not \in H$. 

Since there does not exist a path from $x_2$ to $H \setminus \{x_1, x_2, x_3\}$ in $\beta_i$ not containing a neighbor of $x$, it follows that $v$ is adjacent to $x$, and thus $p_{\ell-1}, p_j \neq v$. Let $N(p_m) \cap (H \cup x) = \{h_1, h_2\}$, where $h_1$ is on the path from $x_1$ to $h_2$ through $H \setminus x_2$. We may assume that if $v$ is adjacent to one of $h_1, h_2$, then $v$ is adjacent to $h_1$ and $h_1 = p_{m+1}$. Let $R$ be the path from $h_1$ to $x_1$ not containing $h_2$ in $H$. Consider the hole $H'$ given by $x_1 \dd x_2 \dd p_1 \dd P \dd p_m \dd h_1 \dd R \dd x_1$.  Then, $v$ has two non-adjacent neighbors $p_{\ell-1}$ and $p_j$ in $H'$. Since $G$ is clean and theta-free, it follows that $(H', v)$ is a twin wheel. 
{  Since $v$ is adjacent to both $p_{\ell -1}$ and $p_j$, and $p_{\ell -1}p_j$ is not an edge, and $(H',v)$ is a twin wheel, either all the neighbors of $v$ in $H'$
are contained in $R\cup x_2$, or they are all contained in $P\cup \{ p_0,p_{m+1}\}$. Since $v$ has at least 2 neighbors in $P\cup \{ p_0,p_{m+1}\}$, it follows that}
either $p_j = h_1 = p_{m+1}$, $p_{\ell-1} = p_0$, and $N(v) \cap (H \cup P) = \{x_1, x_2, h_1\}$, where $h_1x_1$ is an edge and $v$ has no other neighbors in $H$ because $G$ is clean; or $j = \ell+1$ and $N(v) \cap H' = \{p_{\ell-1}, p_{\ell}, p_{\ell+1}\}$. In the first case, $h_2 \in H \setminus N[v]$ and $p_mh_2$ is an edge, so $P$ and $H \setminus N[v]$ are in the same connected component of $\beta_{i-1} \setminus N[v]$. 
{  Since $H\subseteq \beta_i$, it follows that}
$P \subseteq \beta_i$, a contradiction. Therefore, the second case holds. Now, consider the hole $H''$ given by $x_1 \dd x_2 \dd p_1 \dd P \dd p_{\ell-1} \dd v \dd p_j \dd P \dd p_m \dd h_1 \dd R \dd x_1$. Then, $N(x) \cap H'' = \{x_1, x_2, v\}$, and since $G$ is clean, $(H'', x)$ is not a short pyramid. Therefore, $p_{\ell-1} = x_2 = p_0$. 

Let $S$ be the path from $h_2$ to $x_3$ in $H \setminus \{h_1\}$. Since $N(v) \cap H' = \{p_0, p_1, p_2\}$, it follows that $v$ has no neighbors in $P \setminus \{p_1, p_2\}$. Further, since $v$ has three neighbors $x_2, p_1, p_2$ in the hole given by $x_2 \dd x_3 \dd S \dd h_2 \dd p_m \dd P \dd p_1 \dd x_2$, it follows that $v$ has no neighbors in $S$. Therefore, let $H'''$ be the hole given by $x \dd v \dd p_2 \dd P \dd p_m \dd h_2 \dd S \dd x_3 \dd x$. Then, $(H''', x_2)$ is a twin wheel, where $x$ is the clone of $x_2$ in $H'''$. Furthermore, there is a path contained in $Q \cup (P \setminus p_1) \cup (H \setminus x_2)$ from $x$ to $H''' \setminus \{v, x, x_3\}$ containing no neighbor of $x_2$ other than $x$, so $(H''', x_2)$ is $x$-rich. But $N(p_1) \cap {  (H'''\cup x_2)} = \{p_2, v, x_2\}$, contradicting Lemma \ref{lemma:u_x_poor}. This proves (1). \\

Suppose that $\beta_{2k+1}$ contains a terminal twin wheel $(H, x, x_2)$. By (1), it follows that $(H, x, x_2)$ is a terminal twin wheel in $G$, so we may assume that $F = (H, \{x\})$ is a twin wheel forcer in $G$. Then, by Theorem \ref{thm:bigthm}, $F$ is not active for $\beta_{2k+1}$, a contradiction. Therefore, $\beta_{2k+1}$ does not contain a terminal twin wheel. 
\end{proof}

The following lemma shows that if $G$ is a graph with no balanced separator, no clique cutset, and no forcer, then $G$ has no star cutset. 

\begin{lemma}
Let $c \in [\frac{1}{2}, 1)$. Let $G$ be a theta-free graph,  {let $w:V(G) \to [0, 1]$ be a weight function on $G$ with $w(G) = 1$ and $w^{\max} \leq m$}, and suppose that $G$ has no $(w, c, 1)$-balanced separator, $G$ has no clique cutset, and $G$ has no forcer. Then $G$ has no star cutset. 
\label{lemma:no_sc}
\end{lemma}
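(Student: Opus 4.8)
The plan is to assume, for a contradiction, that $G$ has a star cutset and to extract from it either a theta or a forcer of $G$, each of which is excluded by hypothesis. First I would carry out some reductions. Since the empty set is a clique and $G$ has no clique cutset, $G$ is connected; hence every cutset of $G$ is nonempty, so a star cutset $S$ of $G$ has a centre, i.e.\ there is $x\in S$ with $S\subseteq N[x]$. Among all such pairs choose one, still denoted $(x,S)$, with $|S|$ minimum. As $G$ has no clique cutset, $S$ is not a clique; since $x$ is complete to $S\setminus\{x\}$, there exist $a,b\in S\setminus\{x\}$ that are nonadjacent. Moreover, for every $s\in S\setminus\{x\}$ the set $S\setminus\{s\}$ still contains $x$ and lies in $N[x]$, so by minimality it is not a cutset; as distinct components of $G\setminus S$ are anticomplete, this forces every such $s$ --- in particular $a$ and $b$ --- to have a neighbour in every component of $G\setminus S$. (The hypothesis that $G$ has no $(w,c,1)$-balanced separator additionally allows one to take one of those components to have weight more than $c$, but this is not needed below.)

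Next I would build a hole. Fix two distinct components $A$ and $B$ of $G\setminus S$. Since $A$ is connected and both $a$ and $b$ have neighbours in $A$, let $P_A$ be a shortest path from $a$ to $b$ in $G[A\cup\{a,b\}]$; it is induced, has length at least $2$ (because $a$ is nonadjacent to $b$), and its interior lies in $A$. Let $P_B$ be an analogous path with interior in $B$. Since $A$ is anticomplete to $B$ and $a$ is nonadjacent to $b$, the union $H:=P_A\cup P_B$ is a hole of $G$, and $a,b\in N(x)\cap V(H)$; also $x\notin V(H)$ since $x\in S$ while $V(H)\subseteq A\cup B\cup\{a,b\}$.

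The crux is the analysis of $N(x)\cap V(H)$. If this set equals $\{a,b\}$, then $x$ is anticomplete to the interiors of $P_A$ and $P_B$, so $P_A$, $P_B$ and the path $a\dd x\dd b$ are three internally disjoint paths of length at least $2$ from $a$ to $b$ with pairwise anticomplete interiors, i.e.\ a theta in $G$ --- contradiction. Hence $|N(x)\cap V(H)|\ge 3$, so $(H,x)$ is a wheel. If $(H,x)$ is a proper wheel, then $(H,\{x\})$ is a forcer; if $(H,x)$ is a short pyramid, then $(H,\{x,y\})$ is a forcer, where $y$ is the neighbour of $x$ on $H$ outside the unique adjacent pair; either way we contradict that $G$ has no forcer. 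The remaining case is that $(H,x)$ is a twin wheel, so $N(x)\cap V(H)$ consists of exactly three vertices inducing a path; since $a$ is nonadjacent to $b$ this path is $a\dd y_2\dd b$, where $y_2$ is the clone of $x$ in $H$, and as $y_2$ has only two neighbours on $H$ it follows that $a\dd y_2\dd b$ is a subpath of $H$. Thus $y_2$ is the unique interior vertex of $P_A$ or of $P_B$, and by the symmetry between $A$ and $B$ we may assume $y_2\in A$ and $P_A=a\dd y_2\dd b$; consequently $V(H)\setminus N[x]$ equals the interior of $P_B$, a nonempty subset of $B$.

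Finally I would observe that this twin wheel is forced to be terminal. Any path from $y_2$ to $V(H)\setminus N[x]$ runs from $A$ to $B$ and so must contain a vertex of $S$; since $y_2\notin S$ and $S\setminus\{x\}\subseteq N(x)$, that vertex either lies in $N(x)\setminus\{y_2\}$ or is $x$ itself, and in the latter case the vertex following $x$ on the path lies in $N(x)\setminus\{y_2\}$. Hence no such path avoids $N(x)\setminus\{y_2\}$, so $(H,x)$ is $y_2$-poor; therefore $(H,x,y_2)$ is a terminal twin wheel and $(H,\{x\})$ is a forcer, the final contradiction. The one point needing care is exactly this last observation: the clone of a twin wheel produced in this way is trapped on one side of the star cutset $S\subseteq N[x]$, which is what makes the twin wheel automatically terminal; the reductions, the choice of $a,b$, and the construction of $H$ are all routine. (Since $G$ has no forcer it is in particular clean, so one could alternatively run the twin-wheel case through Lemmas~\ref{lemma:u_x_poor} and~\ref{lemma:paths_shapes}, but that seems unnecessary here.)
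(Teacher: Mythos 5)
Your proposal is correct, and it reaches the contradiction by a somewhat different route than the paper. The paper fixes the center $v$ of the given star cutset and passes to the \emph{canonical star separation} for $\{v\}$: it uses the no-$(w,c,1)$-balanced-separator hypothesis to ensure $G\setminus N[v]\neq\emptyset$, applies the no-clique-cutset hypothesis to $N(A^*)$ for a component $A^*$ of the small side to get two nonadjacent attachments $u_1,u_2$, and builds the hole through $A^*$ and the big component $B$; since $v$ has no neighbour in $B$, the clone of the resulting twin wheel lies in $A^*$, and poorness follows because every path from $A$ to $B$ meets $C\subseteq N[v]$. You instead take a minimum-size star cutset $S$ with center $x$, use minimality to show every vertex of $S\setminus\{x\}$ attaches to every component of $G\setminus S$, and apply the no-clique-cutset hypothesis to $S$ itself to obtain the nonadjacent poles $a,b$; the endgame (theta-freeness and absence of proper-wheel and short-pyramid forcers force a twin wheel whose clone is trapped on one side of $S$, hence $x_2$-poor, hence a twin wheel forcer) is the same as the paper's, including the small care needed when the separating vertex on a would-be richness path is $x$ itself. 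What your setup buys is that neither the weight function nor the no-$(w,c,1)$-balanced-separator hypothesis is used, so you in fact prove a slightly stronger statement; the cost is a symmetric case (the clone may sit in either component, handled by your WLOG), whereas the paper's canonical-separation setup pins the clone to the small side and fits the canonical-separation machinery it uses throughout. All steps you sketch (minimality giving attachments to all components, the induced hole, the exhaustive wheel case analysis, and the poorness argument) check out against the paper's definitions.
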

\begin{proof}
Suppose $G$ has a star cutset $C'$ centered at $v$ and let $(A', C', B')$ be a star separation such that $A', B' \neq \emptyset$. Let $(A, C, B)$ be the canonical star separation for $\{v\}$. Since $G$ has no $(w, c, 1)$-balanced separator, $G \setminus N[v] \neq \emptyset$, and therefore $B \neq \emptyset$. Without loss of generality let $B \subseteq B'$. Then, $A' \subseteq A$, and therefore $A \neq \emptyset$. 

Let $A^*$ be a component of $A$. Since $G$ does not have a clique cutset, it follows that there exist $u_1, u_2 \in N(A^*)$ such that $u_1u_2 \not \in E(G)$. Let $P$ be a path from $u_1$ to $u_2$ through $B$ and let $Q$ be a shortest path from $u_1$ to $u_2$ through $A^*$. Let $H$ be the hole given by $u_1 \dd Q \dd u_2 \dd P \dd u_1$. Then, $v$ has two non-adjacent neighbors in $H$. Because $G$ is clean and theta-free, it follows that $(H, v)$ is not a proper wheel or a short pyramid. Therefore, $(H, v)$ is a twin wheel, 
{  and since by definition of canonical star separation $v$ has no neighbor in $B$,}
$Q = u_1 \dd a \dd u_2$ for some vertex $a \in A^*$, and $a$ is the clone of $v$ in $H$. Since every path from $a$ to $B$ intersects $N[v]$, it follows that $(H, v)$ is $a$-poor, so $(H, v, a)$ is a terminal twin wheel in $G$, a contradiction. 
\end{proof}

\section{Graphs with no star cutset} 
\label{sec:no_sc}

In this section, we show that if $G$ is a $C_4$-free odd-signable graph with bounded degree and no star cutset, then $G$ has {  bounded treewidth}. A partition $(X_1,X_2)$ of the vertex set of a graph $G$ is a {\em 2-join} if for $i=1,2$
there exist disjoint nonempty $A_i,B_i\subseteq X_i$ satisfying the following:
\begin{itemize}
\itemsep -0.2em
\item $A_1$ is complete to $A_2$, $B_1$ is complete to $B_2$, and there are no other edges 
between $X_1$ and $X_2$;
%\item for $i=1,2$, $|X_i|\geq 3$;
\item for $i=1,2$, $G[X_i]$ contains a path with one end in $A_i$, one end in $B_i$ and interior in $X_i\setminus (A_i\cup B_i)$
and $G[X_i]$ is not a path.
\end{itemize}

We say that $(X_1,X_2,A_1,B_1,A_2,B_2)$ is a {\em split} of the 2-join $(X_1,X_2)$. A {\em long pyramid} is a pyramid all of whose three paths are of length at least 2. An {\em extended nontrivial basic} graph $R$ is defined as follows:
\begin{itemize}
\itemsep -0.2em
\item $V(R)=V(L)\cup \{ x,y\}$.
\item $L$ is the line graph of a tree $T$.
\item $x$ and $y$ are adjacent, and $\{ x,y\}\cap V(L)=\emptyset$.
\item $L$ contains at least two maximal cliques of size at least 3.
\item The vertices of $L$ corresponding to the edges incident with vertices of degree 1 in $T$
are called {\em leaf vertices}. Each leaf vertex of $L$ is adjacent to exactly one of $\{ x,y\}$
and no other vertex of $L$ is adjacent to a vertex of $\{ x,y\}$.
\item These are the only edges in $R$.
\end{itemize}

We observe that in order to prove the decomposition theorem for $C_4$-free odd-signable graphs, 
extended nontrivial basic graphs are defined in a more complicated way in \cite{daSilva2013Decomposition2-joins},
but for what we want to prove here the above definition suffices. Let ${\cal B}^*$ be the class of graphs that consists of cliques, holes, long pyramids and extended nontrivial basic graphs. 

\begin{theorem}\label{C4fos-decomp}
{\em (\cite{daSilva2013Decomposition2-joins})} A $C_4$-free odd-signable 
graph either belongs to ${\cal B}^*$
or it has a star cutset or a 2-join.
\end{theorem}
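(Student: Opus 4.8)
The plan is to prove the contrapositive: if $G$ is a connected $C_4$-free odd-signable graph with no star cutset and no 2-join, then $G\in{\cal B}^*$. Since a clique cutset $C$ satisfies $C\subseteq N[v]$ for each $v\in C$, the hypothesis ``no star cutset'' implies ``no clique cutset'', which we use throughout. By Theorem \ref{os}, $G$ contains no even wheel, no theta, and no prism; with $C_4$-freeness this means the only three-path configuration $G$ may contain is a pyramid, and every wheel of $G$ is a proper wheel (possibly universal), a twin wheel, or a short pyramid. Following the strategy of \cite{daSilva2013Decomposition2-joins}, the argument splits according to whether $G$ contains a pyramid.

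\emph{Case 1: $G$ is pyramid-free.} If $(H,x)$ is a proper wheel of $G$ that is not universal, Lemma \ref{lemma:proper_wheel_forcer} exhibits a subset of $N[x]$ that is a cutset of $G$ --- a star cutset --- a contradiction; if $(H,x)$ is a universal wheel, then by Lemma \ref{lemma:univ_wheel_forcer} applied to two non-adjacent vertices of $H$, $G$ again has a star cutset, a contradiction. A short pyramid is a pyramid (take as apex the unique neighbour of $x$ on $H$ outside the adjacent pair of $N(x)\cap H$), so $G$ has no short pyramid. Hence every wheel of $G$ is a twin wheel; and since a terminal twin wheel forces a star cutset by Lemma \ref{lemma:forcer_starcutset} (after possibly exchanging $x$ with its clone), every wheel of $G$ is in fact a non-terminal twin wheel. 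It then remains to prove that a connected $C_4$-free graph containing no theta, prism or pyramid, in which every wheel is a non-terminal twin wheel, and having no clique cutset and no 2-join, is a clique, a hole, or an extended nontrivial basic graph. If $G$ has no hole it is chordal, and a chordal graph with no clique cutset is a clique. If $G$ has a hole, one studies the maximal cliques of $G$ and the ways a vertex can attach to a hole --- which the absence of thetas, proper wheels and short pyramids forces to be a single vertex, a single edge, or the $N(x)\cap H$-pattern of a twin wheel --- and recovers a tree $T$ such that $G$ is the line graph of $T$ together with at most two mutually adjacent ``apex'' vertices attached to the leaf vertices of the line graph; absence of 2-joins then forces this picture to be an extended nontrivial basic graph, and any deviation produces a star cutset or a 2-join.

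\emph{Case 2: $G$ contains a pyramid $\Pi$ with apex $a$ and triangle $b_1b_2b_3$.} Here the bulk of the work is to describe the attachments to $\Pi$ of the components of $G\setminus V(\Pi)$. Using $C_4$-, theta-, prism- and even-wheel-freeness, together with the forcer Lemmas \ref{lemma:proper_wheel_forcer}, \ref{lemma:univ_wheel_forcer} and \ref{lemma:short_pyramid_forcer}, one shows that such attachments are severely restricted: roughly, a vertex with a neighbour in $\Pi$ sees only a single path of $\Pi$ or only the triangle, and a component together with its attachments can only thicken $\Pi$ into a long pyramid. If $G$ is itself a long pyramid we are done, since long pyramids lie in ${\cal B}^*$; otherwise the way in which some component attaches, by a further case analysis, either exposes a proper wheel, a universal wheel, or a short pyramid (and hence, after the extra analysis appropriate to the pyramid setting, a star cutset or a 2-join), or yields directly a 2-join whose split separates the two ``sides'' of $\Pi$ --- in each case contradicting our hypotheses.

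The main obstacle is Case 2: bounding all possible attachments of arbitrary vertices and connected subgraphs to a pyramid, and showing that every such attachment either extends $\Pi$ to a long pyramid, creates one of the forbidden induced subgraphs, or exposes a star cutset or a 2-join, is a long and delicate case analysis --- this is the technical heart of \cite{daSilva2013Decomposition2-joins}. A secondary difficulty is the final step of Case 1, where the line-graph-of-a-tree structure, together with its two possible apex vertices, must be reconstructed from purely local twin-wheel information, which again requires careful bookkeeping of the maximal cliques of $G$.
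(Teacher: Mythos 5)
This statement is not proved in the paper at all: it is quoted verbatim (with a simplified definition of extended nontrivial basic graphs) from \cite{daSilva2013Decomposition2-joins}, so the only ``proof'' the paper offers is the citation. Your proposal follows the same overall strategy as that reference --- split on whether $G$ contains a pyramid, use the wheel lemmas (Lemmas \ref{lemma:proper_wheel_forcer}, \ref{lemma:univ_wheel_forcer}, \ref{lemma:short_pyramid_forcer}) to rule out proper wheels, universal wheels and short pyramids in a graph with no star cutset, and then analyse what remains --- but as written it is an outline rather than a proof. The two steps that carry all the content of the theorem are exactly the ones you leave as assertions: in Case 1, that a connected $C_4$-free graph with no theta, prism or pyramid, no clique cutset, no 2-join, and in which every wheel is a non-terminal twin wheel must be a clique, a hole, or an extended nontrivial basic graph (``one studies the maximal cliques \dots and recovers a tree $T$''); and in Case 2, that every attachment of a component of $G\setminus V(\Pi)$ to a pyramid $\Pi$ either thickens $\Pi$ into a long pyramid, produces a forbidden configuration, or exposes a star cutset or a 2-join (``one shows \dots by a further case analysis''). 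These are precisely the technical heart of \cite{daSilva2013Decomposition2-joins}, running to a long case analysis, and deferring them means the statement has not been proved; the reduction you do carry out (wheels are non-terminal twin wheels, short pyramids are pyramids, clique cutsets are star cutsets) is correct but is only the easy preamble.

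Two smaller points. First, the theorem as stated here uses the paper's simplified definition of extended nontrivial basic graphs, while the authors explicitly note that the proof in \cite{daSilva2013Decomposition2-joins} requires a more complicated basic class; a self-contained argument along your lines would have to either prove the statement with that fuller class or justify why, in the absence of star cutsets and 2-joins, the simplified class suffices --- your sketch does not address this. Second, your Case 1 endgame silently assumes that the twin-wheel pattern of attachments forces the line-graph-of-a-tree structure with at most two adjacent apex vertices; this reconstruction is itself a substantial argument (it is where the ``at least two maximal cliques of size at least $3$'' and the leaf-vertex adjacency conditions come from), and nothing in the lemmas available in this paper delivers it.
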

Let $G$ be a graph and $(X_1,X_2,A_1,B_1,A_2,B_2)$  a split of a 2-join of $G$. The {\em blocks of decomposition} of $G$
with respect to $(X_1,X_2)$ are graphs $G_1$ and $G_2$ defined as follows.
Block $G_1$ is obtained from $G[X_1]$ by adding a {\em marker path} $P_2=a_2\dd \ldots \dd b_2$ of length 3 such that 
$a_2$ is complete to $A_1$, $b_2$ is complete to $B_1$, and these are the only edges between $P_2$ and $X_1$.
 Block $G_2$ is obtained analogously from $G[X_2]$ by adding a marker path $P_1=a_1\dd \ldots \dd b_1$.

The following lemma follows from the proofs of Lemmas 3.5 and 3.7 in \cite{Trotignon2012Combinatorial2-joins}.

\begin{lemma}\label{l1}
{\em (\cite{Trotignon2012Combinatorial2-joins})}
Let $G$ be a $C_4$-free graph with no star cutset, let $(X_1,X_2)$ be a 2-join of $G$, and $G_1$ and $G_2$ the corresponding blocks of decomposition.
Then $G_1$ and $G_2$ do not have star cutsets.
\end{lemma}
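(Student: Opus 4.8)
The plan is to argue by contradiction: assume, say, that $G_1$ has a star cutset $S$ with center $v$ — so $v\in S\subseteq N_{G_1}[v]$ and $G_1\setminus S$ is disconnected — and construct from it a star cutset of $G$. Write the marker path as $P_2=a_2\dd u_1\dd u_2\dd b_2$. I would begin with two observations. First, since $G$ has no star cutset it is connected (otherwise $\emptyset$ is a star cutset), and therefore every component of $G[X_1]$ meets $A_1\cup B_1$, while by the $A_1$--$B_1$ path supplied by the 2-join some component of $G[X_1]$ meets both $A_1$ and $B_1$; in particular $G_1$ is connected. Second — and this is the only place $C_4$-freeness is used — if disjoint vertex sets $P,Q$ of $G$ are complete to each other and $P$ is not a clique, then $Q$ is a clique, since two nonadjacent vertices of $P$ together with two nonadjacent vertices of $Q$ would induce a $C_4$; this is applied with $\{P,Q\}=\{A_1,A_2\}$ or $\{B_1,B_2\}$.

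Next I would case on the position of $v$. The main case is $v\in X_1$. Here $N_{G_1}[v]$ contains at most one of $a_2,b_2$, and in fact $S\cap P_2\subseteq\{a_2\}$ or $S\cap P_2\subseteq\{b_2\}$, so the vertices of $P_2$ outside $S$ form a connected subpath. Put $S'=S$ if $S\cap P_2=\emptyset$, $S'=(S\cap X_1)\cup A_2$ if $a_2\in S$, and $S'=(S\cap X_1)\cup B_2$ if $b_2\in S$. Since $G_1\setminus S$ is disconnected, there is a component $D$ of $G_1\setminus S$ with $D\cap P_2=\emptyset$; such a $D$ lies in $X_1$, is a component of $G[X_1]\setminus(S\cap X_1)$, and — because every vertex of $B_1$ (resp.\ $A_1$) outside $S$ attaches to $b_2$ (resp.\ $a_2$) — satisfies $D\cap B_1=\emptyset$ when $b_2\notin S$ and $D\cap A_1=\emptyset$ when $a_2\notin S$. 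Hence $N_G(D)\subseteq S'$: the neighbours of $D$ inside $X_1$ lie in $S\cap X_1$, and its only possible neighbours in $X_2$ arise through the $A_1$--$A_2$ or $B_1$--$B_2$ complete joins and so land in $A_2$ or $B_2$. As $D\neq\emptyset$ and $X_2\setminus S'$ is nonempty (it contains at least one of $A_2,B_2$) and disjoint from $D$, the set $S'$ is a cutset of $G$. Finally $S'$ is a star cutset: if $v\in X_1\setminus(A_1\cup B_1)$ then $S'=S\subseteq N_G[v]$ with $v\in S'$; if $v\in A_1$ then $b_2\notin S$ and $S'=(S\cap X_1)\cup A_2\subseteq N_G[v]$ since $v\in A_1$ is complete to $A_2$; the case $v\in B_1$ is symmetric. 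Either way $G$ has a star cutset, a contradiction.

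It remains to treat $v$ on the marker path. If $v=a_2$ (and symmetrically $v=b_2$), then $S\subseteq A_1\cup\{a_2,u_1\}$ with $a_2\in S$, $S\cap X_1\subseteq A_1$, and $P_2\setminus S$ is again a connected terminal segment of $P_2$; the same construction produces a cutset $S'=(S\cap A_1)\cup A_2$ of $G$, and one picks its center among $A_2$ when $A_2$ is a clique and among $S\cap A_1$ when $A_1$ is a clique — these possibilities are exhaustive by the second observation — with the small case $S\cap A_1=\emptyset$ handled directly as in the next sentence. If $v=u_1$ (and symmetrically $v=u_2$), then $S\subseteq\{a_2,u_1,u_2\}$; using that $G_1$ is connected and some component of $G[X_1]$ joins $A_1$ to $B_1$, one checks such an $S$ can be a cutset only when $a_2\in S$, in which case $G_1\setminus S$ being disconnected yields a whole component $D$ of $G[X_1]$ with $D\cap B_1=\emptyset$, so $N_{G[X_1]}(D)=\emptyset$ and $N_G(D)\subseteq A_2$. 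Choosing $a\in D\cap A_1$: either $D\neq\{a\}$ and $A_2\cup\{a\}\subseteq N_G[a]$ is a star cutset of $G$ separating $D\setminus\{a\}$; or $D=\{a\}$, so $a$ is isolated in $G[X_1]$, whence the $A_1$--$B_1$ path forces $|A_1|\geq2$ and $A_1$ not a clique, so $A_2$ is a clique, and since $N_G(a)=A_2$ this clique is a cutset of $G$, hence a star cutset. In every case we contradict the hypothesis. The block $G_2$ is handled by exchanging the roles of $X_1$ and $X_2$.

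I expect the main difficulty to be bookkeeping rather than a single hard step: one must run through every position of $v$, including all four marker-path vertices, keep track of which of $a_2,b_2$ lands in $S$, and in each sub-case verify both that the replacement $S'$ still disconnects $G$ and that it is still a star cutset (i.e.\ lies in the closed neighbourhood of one of its own vertices). The degenerate configuration $D=\{a\}$ is the only point where $C_4$-freeness is genuinely needed, through the clique-forcing observation above. This is precisely the analysis carried out, in the special case relevant here, in the proofs of Lemmas 3.5 and 3.7 of \cite{Trotignon2012Combinatorial2-joins}.
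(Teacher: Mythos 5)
Your proof is correct. Note first that the paper does not actually prove this lemma: it is quoted with a citation, stating that it follows from the proofs of Lemmas 3.5 and 3.7 of the Trotignon--Vu\v{s}kovi\'c paper, so there is no in-paper argument to compare against line by line. Your blind reconstruction follows the natural (and, in essence, the cited) strategy: assume a star cutset $S$ with center $v$ in the block $G_1$, and pull it back to a star cutset of $G$ by replacing $a_2$ (resp.\ $b_2$) with $A_2$ (resp.\ $B_2$), using that $P_2\setminus S$ is a connected segment to find a component $D$ of $G_1\setminus S$ inside $X_1$, and using $C_4$-freeness only to guarantee that at least one of $A_1,A_2$ is a clique so that the pulled-back set (or $A_2$ itself, or $A_2\cup\{a\}$) is again a star. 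I checked the cases: the main case $v\in X_1$, the marker-end case $v\in\{a_2,b_2\}$, and the marker-interior case $v\in\{u_1,u_2\}$ (where your connectivity argument correctly forces $a_2\in S$ via the $A_1$--$B_1$ path), and they all go through, including the degenerate situation $D=\{a\}$ with $a$ isolated in $G[X_1]$. Two spots are stated slightly loosely but are harmless: for $v\in A_1$ you write $S'=(S\cap X_1)\cup A_2$, which is only the sub-case $a_2\in S$ (when $a_2\notin S$ one has $S'=S\subseteq N_G[v]$ anyway), and the deferred sub-case $v=a_2$, $S\cap A_1=\emptyset$ does indeed reduce to the same ``whole component of $G[X_1]$ avoiding $B_1$'' analysis as the $v=u_1$ case, as you indicate. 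So the proposal is a valid self-contained proof of the statement the paper imports from the literature.
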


Below, we prove that if $G$ is a $C_4$-free odd-signable graph and $(X_1, X_2, A_1, B_1, A_2, B_2)$ is a split of a 2-join of $G$, then the blocks of decomposition of $G$ with respect to $(X_1, X_2)$ are also $C_4$-free odd-signable.

\begin{lemma}\label{l2}
Let $G$ be a $C_4$-free odd-signable graph with no star cutset, let $(X_1,X_2)$ be a 2-join of $G$, and $G_1$ and $G_2$ the corresponding blocks of decomposition.
Then $G_1$ and $G_2$ are $C_4$-free odd-signable.
\end{lemma}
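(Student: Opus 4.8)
The plan is to show that both $G_1$ and $G_2$ are $C_4$-free and odd-signable separately, since these are different kinds of properties. For $C_4$-freeness, I would argue by contradiction: suppose $G_1$ contains an induced $C_4$, say $D$. The marker path $P_2 = a_2 \dd c_2 \dd c_2' \dd b_2$ has length $3$, so it cannot be entirely inside a hole of length $4$; in particular $D$ uses at most two consecutive vertices of $P_2$, and by the structure of $P_2$ (the interior vertices $c_2, c_2'$ have degree $2$ in $G_1$, with neighbors only on $P_2$) one checks that if $D$ meets $P_2$ it must meet it in a single edge, say $a_2$ together with one neighbor in $A_1$ (or symmetrically $b_2$ with a neighbor in $B_1$). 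Then replacing the offending portion of $P_2$ by a vertex of the opposite attachment class and a short path through $X_2$ (which exists because the split requires $G[X_2]$ to contain a path from $A_2$ to $B_2$) would produce a small hole or short configuration in $G$ — but more simply, since $a_2$ is complete to $A_1$, any induced $C_4$ through $a_2$ and $A_1$ would give, after substituting a single vertex $a_2' \in A_2$ for $a_2$, an induced $C_4$ in $G$ (because $a_2'$ is also complete to $A_1$ and has no other neighbor among the relevant vertices). The case where $D \subseteq X_1$ is immediate since $G[X_1]$ is an induced subgraph of $G$. So $G_1$ has no induced $C_4$, and by symmetry neither does $G_2$.

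For odd-signability, the clean approach is to use the characterization of Theorem~\ref{os}: a graph is odd-signable if and only if it is (even wheel, theta, prism)-free. So I would show that $G_1$ contains no theta, no prism, and no even wheel. The key structural fact is that the marker path $P_2$ behaves, with respect to $X_1$, exactly like a path through $X_2$ from $A_2$ to $B_2$: given any of the three forbidden configurations $F$ in $G_1$, if $F \subseteq X_1$ we are done since $G[X_1]$ is induced in $G$; otherwise $F$ meets $P_2$, and since $P_2^*$ consists of two degree-$2$ vertices, $F \cap P_2$ is a subpath of $P_2$ containing at least one interior vertex, hence $F$ meets $P_2$ in a subpath with one end in $A_1 \cup \{a_2\}$-side and the other in the $B_1 \cup \{b_2\}$-side. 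We then substitute for the portion of $F$ in $P_2$ a path $R$ in $G[X_2]$ with one end in $A_2$, the other in $B_2$, and interior in $X_2 \setminus (A_2 \cup B_2)$ (such a path exists by the definition of split). One must take $R$ with appropriate parity — since $P_2$ has a fixed length $3$ and an arbitrary $R$ has some other length, I would instead argue directly that the substitution produces a graph in $G$ that is again a theta / prism / even wheel (these configurations are robust under replacing one path by another path with the same endpoints-attachment pattern, as long as no new edges are created between $R^*$ and $F \setminus P_2$, which holds because the only edges between $X_1$ and $X_2$ go through $A_1A_2$ and $B_1B_2$). This contradicts Theorem~\ref{os} applied to $G$, so $G_1$ is odd-signable; symmetrically so is $G_2$.

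The main obstacle I anticipate is the parity bookkeeping in the even-wheel case and the theta case: a theta requires all three paths to have length $\geq 2$ and an even wheel requires the center to have an even number of neighbors on the hole, and when we swap the marker path $P_2$ (of length $3$) for a path $R$ of possibly different length, the length of one of the paths of the configuration changes, which could in principle destroy the configuration in $G$ even though it was present in $G_1$. The resolution is that we do not need $R$ to have the same length as $P_2$: a theta in $G_1$ using $P_2$ yields, after substitution, three internally disjoint paths between the two branch vertices in $G$ with no edges between them except at the ends (again because $R^*$ is anticomplete to $X_1 \supseteq F \setminus P_2$), and each still has length $\geq 2$ since $R$ has length $\geq 2$; this is already a theta in $G$, contradiction. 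The same robustness handles the prism (the substitution path has length $\geq 1$, which is automatic) and — with slightly more care, choosing $R$ through $X_2$ so that it is an induced path and meets the rest of the even wheel only at its endpoints — the even wheel, noting that the number of neighbors of the center on the new hole is unchanged if the center lies in $X_1$ and the center can be checked not to lie on $P_2$ (degree-$2$ vertices cannot be wheel centers). Thus in every case Theorem~\ref{os} applied to $G$ is contradicted, completing the proof; and by the left–right symmetry of the $2$-join the same argument gives the conclusion for $G_2$.
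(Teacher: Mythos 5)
Your overall substitution strategy is the same as the paper's, but there is a genuine gap in the case analysis for odd-signability. You assert that if a forbidden configuration $F$ (theta, prism, even wheel) of $G_1$ meets the marker path $P_2=a_2\hbox{-}c\hbox{-}c'\hbox{-}b_2$, then $F\cap P_2$ is a subpath containing an interior vertex of $P_2$, and you then only perform the substitution of all of $P_2$ by a path of $G[X_2]$ from $A_2$ to $B_2$. The degree-$2$ observation only gives the converse implication: if $F$ uses an interior vertex of $P_2$ then it uses all of $P_2$; it does not rule out $F\cap P_2=\{a_2\}$, $\{b_2\}$ or $\{a_2,b_2\}$, and these are exactly the cases where the real work lies. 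In particular: (i) a wheel whose hole lies in $X_1$ can have center $a_2$ — your remark that degree-$2$ vertices cannot be wheel centers covers only $c,c'$, not the ends $a_2,b_2$, which are complete to $A_1$, resp.\ $B_1$, and may have many neighbors; this case is handled in the paper by swapping $a_2$ for a vertex $a\in A_2$. (ii) A configuration meeting $P_2$ exactly in $\{a_2,b_2\}$ (e.g.\ $a_2$ on one path of a theta with both neighbors in $A_1$, and $b_2$ on another with both neighbors in $B_1$) must be repaired by replacing $a_2,b_2$ with $a\in A_2$, $b\in B_2$ such that $ab$ is \emph{not} an edge, since $a_2b_2$ is a non-edge of $G_1$; the existence of such a non-adjacent pair is not automatic, and the paper obtains it from the no-star-cutset hypothesis (if $A_2$ were complete to $B_2$, then $A_2\cup B_1\cup\{u\}$ for $u\in B_2$ would be a star cutset of $G$). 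Your proposal never uses the no-star-cutset hypothesis in the odd-signability argument, which is a symptom of these missing cases. The paper also organizes the theta/prism analysis by choosing, among the three holes formed by pairs of paths, one with largest intersection with $P_2$, precisely to control how the third path can meet $P_2$; your ``replace the portion of $F$ in $P_2$'' is only clearly meaningful when $P_2\subseteq F$.

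For the remaining parts your sketch is essentially fine: when $P_2\subseteq F$ the replacement by an $A_2$--$B_2$ path of $G[X_2]$ works as you describe (lengths and, for a center in $X_1$, the parity of its neighbors are not an issue), and the $C_4$-freeness argument is correct once one also notes that a $C_4$ through both $a_2$ and $b_2$ is impossible because its two other vertices would lie in $A_1\cap B_1=\emptyset$ (the paper treats $C_4$-freeness as immediate from the construction).
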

\begin{proof}
By constructions of the blocks, clearly $G_1$ and $G_2$ are $C_4$-free. So by Theorem \ref{os} it suffices to show that if $G_1$ contains an even wheel, theta
or a prism $\Sigma$, then $G$ contains an even wheel, theta or a prism. 
Let $(X_1,X_2,A_1,B_1,A_2,B_2)$ be the split of $(X_1,X_2)$, and let $P_2=a_2\dd \ldots \dd b_2$ be the marker path of $G_1$.
We may assume that $\Sigma \cap P_2\neq \emptyset$, since otherwise we are done. Suppose that $A_2$ is complete to $B_2$. By definition of 2-join, either $X_2 \setminus (A_2 \cup B_2) \neq \emptyset$, or, without loss of generality, $|B_2| \geq 2$. So for $u \in B_2$, $S = A_2 \cup B_1 \cup \{u\}$ is a star cutset {  in $G$} separating $X_1 \setminus B_1$ from $X_2 \setminus (A_2 \cup \{u\})$. Therefore, $A_2$ is not complete to $B_2$, so let $a\in A_2$ and $b\in B_2$ be such that $ab$ is not an edge.
By definition of 2-join, there exists a path $Q_2$ in $G[X_2]$ whose one end is in $A_2$, the other in $B_2$ and whose interior is in $X_2\setminus (A_2\cup B_2)$.

First suppose that $\Sigma=(H,x)$ is an even wheel.
If $H\subseteq X_1$ then without loss of generality $x=a_2$, and hence $(H,a)$ is an even wheel in $G$. So we may assume that $H\cap P_2\neq \emptyset$.
It follows that without loss of generality, {  $H\cap P_2\in \{\{ a_2\}$, $\{ a_2,b_2\}, P_2\}$}. It follows that $x\in X_1$. If $H\cap P_2=\{ a_2\}$ then let $H'=(H\setminus \{ a_2\} )\cup \{ a\}$;
if $H\cap P_2=\{ a_2,b_2\}$ then let $H'=(H\setminus \{ a_2,b_2\} )\cup \{ a,b\}$; and
if $H\cap P_2=P_2$ then let $H'=(H\setminus P_2)\cup Q_2$. Then clearly $(H',x)$ is an even wheel in $G$.

Now assume that $\Sigma$ is a theta  or a prism. Let $R_1,R_2,R_3$ be
the three paths of $\Sigma$. Note that any two of the paths induce a hole, and assume up to symmetry that out of the three holes so created, the hole $H=R_1\cup R_2$ 
has the largest intersection with $P_2$. Then without loss of generality $H\cap P_2=\{ a_2\}$, $\{ a_2,b_2\}$ or $P_2$. If $H\cap P_2=\{ a_2\}$ then let $H'=(H\setminus \{ a_2\} )\cup \{ a\}$;
if $H\cap P_2=\{ a_2,b_2\}$ then let $H'=(H\setminus \{ a_2,b_2\} )\cup \{ a,b\}$; and
if $H\cap P_2=P_2$ then let $H'=(H\setminus P_2)\cup Q_2$. Then clearly $H'$ is a hole in $G$.
By the choice of $H$ it follows that $|R_3\cap P_2|\leq 1$ and hence either $R_3\subseteq X_1$, or $H\cap P_2=\{ a_2\}$ and $R_3\cap P_2=\{ b_2\}$. 
In the first case clearly $H'\cup R_3$ is a theta or a prism, so assume that $H\cap P_2=\{ a_2\}$ and $R_3\cap P_2=\{ b_2\}$. Then, up to symmetry, $a_2\in R_2$. But then it follows that the hole $R_2\cup R_3$ has a larger intersection with $P_2$ than $H$,
a contradiction.
\end{proof}

Let $G$ be a graph.
A {\em flat path} in  $G$ is a path of $G$ of length at least 2 whose interior vertices all have degree 2 in $G$
and whose ends do not have a common neighbor outside this path.
A  {\em leaf} in a graph is a vertex of degree at most 1.
Let ${\cal D}$ be a class of graphs and ${\cal B}\subseteq {\cal D}$. Given a graph $G\in {\cal D}$, a rooted tree $T_G$ is a 
{\em 2-join decomposition tree for $G$ with respect to ${\cal B}$} if the following hold:
\begin{itemize}
\item Each vertex of $T_G$ is a pair $(H,{\cal M})$ where $H$ is a graph in ${\cal D}$ and ${\cal M}$ is a set of vertex-disjoint flat paths of $H$.
\item The root of $T_G$ is $(G,\emptyset)$.
\item Each non-leaf vertex of $T_G$ is $(G',{\cal M}')$ where $G'$ has a 2-join $(X_1,X_2)$ such that the edges between $X_1$ and $X_2$
do not belong to any flat path in ${\cal M}'$. Let ${\cal M}_1$ (respectively ${\cal M}_2$) be the set of all flat paths of ${\cal M}'$ that belong to $G[X_1]$ (respectively $G[X_2]$).
Let $G_1$ and $G_2$ be the blocks of decomposition of $G'$ with respect to 2-join $(X_1,X_2)$ with marker paths $P_2$ and $P_1$ respectively.
The vertex $(G',{\cal M}')$ has two children, which are $(G_1,{\cal M}_1\cup \{ P_2\})$ and $(G_2,{\cal M}_2\cup \{ P_1\})$.
\item Each leaf vertex of $T_G$ is $(G',{\cal M}')$ where $G'\in {\cal B}$. 
\end{itemize}

The following theorem follows from Lemma 4.6 in \cite{Trotignon2012Combinatorial2-joins}.

\begin{theorem}\label{t:tv}
{\em (\cite{Trotignon2012Combinatorial2-joins})}
Let $G$ be a graph and let ${\cal M}$ be a set of vertex-disjoint flat paths of $G$.
Then one of the following holds:
\begin{itemize}
\item[(i)] $G$ has no 2-join.
\item[(ii)] There exists a 2-join $(X_1,X_2)$ of $G$ such that for every path $P\in {\cal M}$, $P\subseteq X_1$ or $P\subseteq X_2$.
\item[(iii)] $G$ or a block of decomposition with respect to some 2-join of $G$ has a star cutset.
\end{itemize}
\end{theorem}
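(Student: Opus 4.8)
The statement is, in essence, Lemma~4.6 of \cite{Trotignon2012Combinatorial2-joins}, so the plan is to quote it, after checking that the notions involved --- 2-join and split, flat path, block of decomposition, and star cutset --- agree with the ones set up above. They do; the only potential discrepancy is that \cite{Trotignon2012Combinatorial2-joins} may allow the marker path of a block to have any odd length at least $3$ rather than exactly $3$, and this is immaterial for alternative (iii), since shortening a marker path merely deletes degree-$2$ interior vertices and leaves the neighbourhoods in $X_i$ of its two ends unchanged, so that a block built with a longer marker path has a star cutset if and only if the block built with the length-$3$ marker path does. With that understood, the trichotomy transfers verbatim.

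For completeness, here is the idea behind the proof of Lemma~4.6 of \cite{Trotignon2012Combinatorial2-joins}. Assume $G$ has a 2-join (else (i) holds), and suppose for contradiction that neither (ii) nor (iii) holds. Among all 2-joins of $G$ I would choose one, $(X_1,X_2)$ with split $(X_1,X_2,A_1,B_1,A_2,B_2)$, minimising the number of paths of ${\cal M}$ that have vertices in both $X_1$ and $X_2$. Since (ii) fails, some $P = p_0 \dd p_1 \dd \cdots \dd p_k \in {\cal M}$ with $k \geq 2$ still meets both sides. The key point is that flatness of $P$ forces the split to be degenerate along $P$: because every interior vertex of $P$ has degree $2$ and the edges between $X_1$ and $X_2$ are exactly those of the two complete bipartite bundles $A_1 \times A_2$ and $B_1 \times B_2$, any bundle containing an interior vertex of $P$ is forced to have at most two vertices, and the crossings of $P$ are confined to a short piece of it. Exploiting this, I would perform surgery on the 2-join in the vicinity of $P$ --- sliding its crossing along $P$, and if necessary absorbing a short segment of $P$ into one side while redefining the split --- and argue that the process must terminate either in a genuine 2-join that crosses strictly fewer paths of ${\cal M}$ (contradicting the minimality of $(X_1,X_2)$) or in a configuration where $G$, or one of the two blocks $G_1, G_2$ of $(X_1,X_2)$, has a star cutset (contradicting that (iii) fails). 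Either way we reach a contradiction, so (ii) must hold.

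The hard part is this last dichotomy: showing that a 2-join which crosses a flat path of ${\cal M}$ and admits no improvement necessarily produces a star cutset in $G$ or in a block. This is where the (possibly high-degree) ends $p_0, p_k$ of $P$, and the effect of the surgery on the blocks of decomposition, have to be analysed case by case. If one is content to simply invoke Lemma~4.6 of \cite{Trotignon2012Combinatorial2-joins}, then only the comparison of conventions described in the first paragraph remains.
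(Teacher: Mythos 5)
Your proposal matches the paper exactly: the paper offers no independent proof of this statement and simply derives it from Lemma~4.6 of \cite{Trotignon2012Combinatorial2-joins}, which is precisely what you do, and your remark that a longer marker path does not affect the presence of a star cutset in a block is a harmless reconciliation of conventions. The appended sketch of the original argument is speculative but not load-bearing, so nothing further is needed.
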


The following lemma shows that $C_4$-free odd-signable graphs with no star cutset have 2-join decomposition trees with respect to ${\cal B}^*$.

\begin{lemma}\label{l:2jdt}
If $G$ is a $C_4$-free odd-signable graph with no star cutset then $G$ has a 2-join decomposition tree with respect to ${\cal B}^*$.
\end{lemma}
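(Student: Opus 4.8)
The plan is to build the tree $T_G$ recursively, invoking Theorem \ref{t:tv} at each step to find a 2-join that respects the flat paths already marked, and applying Lemmas \ref{l1} and \ref{l2} to ensure that the blocks of decomposition stay inside the class of $C_4$-free odd-signable graphs with no star cutset, so that the recursion can continue until we reach graphs in $\mathcal{B}^*$. First I would set up the recursion: the root is $(G,\emptyset)$, which is legitimate since $G$ is $C_4$-free odd-signable with no star cutset by hypothesis. For the inductive step, suppose we have produced a vertex $(G',\mathcal{M}')$ where $G'$ is $C_4$-free odd-signable with no star cutset and $\mathcal{M}'$ is a set of vertex-disjoint flat paths of $G'$. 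Apply Theorem \ref{t:tv} to $(G',\mathcal{M}')$. Outcome (iii) is impossible: $G'$ has no star cutset by the inductive hypothesis, and by Lemma \ref{l1} no block of decomposition of $G'$ with respect to any 2-join has a star cutset either, so no graph arising here has a star cutset. Hence either (i) holds, in which case $(G',\mathcal{M}')$ is declared a leaf, or (ii) holds.

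If (ii) holds, let $(X_1,X_2)$ be a 2-join of $G'$ with each $P\in\mathcal{M}'$ contained in $X_1$ or in $X_2$; in particular the edges between $X_1$ and $X_2$ lie on no path of $\mathcal{M}'$, as required for a non-leaf vertex of a 2-join decomposition tree. Form the blocks of decomposition $G_1,G_2$ with marker paths $P_2,P_1$, let $\mathcal{M}_i$ be the paths of $\mathcal{M}'$ in $G[X_i]$, and give $(G',\mathcal{M}')$ the two children $(G_1,\mathcal{M}_1\cup\{P_2\})$ and $(G_2,\mathcal{M}_2\cup\{P_1\})$. By Lemma \ref{l1}, $G_1$ and $G_2$ have no star cutset, and by Lemma \ref{l2}, $G_1$ and $G_2$ are $C_4$-free odd-signable; and a marker path of length $3$ is a flat path in the block (its two interior vertices have degree $2$, and the two ends have no common neighbor in the block since $A_i$ is anticomplete to $B_i$ once $|X_{3-i}|$ is large enough — this needs a short check but follows from the definition of a split), so each child is again a pair of the required form. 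Thus the recursion is well-defined on each branch.

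To see that the recursion terminates, I would use a monovariant: each application of a 2-join splits $G'$ into two blocks $G_1,G_2$ with $|V(G_i)| < |V(G')|$ (since $X_{3-i}$ is nonempty and the marker path adds only $2$ vertices while at least $|X_{3-i}|\ge 2$ vertices, in fact more by the ``not a path'' condition, are removed — again a routine inequality from the definition of a 2-join). Hence every branch strictly decreases the number of vertices and must reach a vertex where outcome (i) of Theorem \ref{t:tv} holds, i.e., a graph with no 2-join. Such a graph is $C_4$-free odd-signable with neither a star cutset nor a 2-join, so by Theorem \ref{C4fos-decomp} it belongs to $\mathcal{B}^*$. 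This makes every leaf of $T_G$ a pair $(G',\mathcal{M}')$ with $G'\in\mathcal{B}^*$, and so $T_G$ is a 2-join decomposition tree for $G$ with respect to $\mathcal{B}^*$.

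The main obstacle I anticipate is purely bookkeeping rather than conceptual: verifying at each step that a marker path really is a flat path of the block (degree-$2$ interiors and no common neighbor of the ends outside the path), that the edges of the new 2-join avoid all previously marked flat paths — which is exactly what clause (ii) of Theorem \ref{t:tv} delivers — and that the size monovariant is strict. None of these requires new ideas; the substance of the argument is entirely carried by Theorem \ref{t:tv} (to find a flat-path-respecting 2-join or certify none exists), Lemma \ref{l1} (to rule out outcome (iii) and keep ``no star cutset'' as an invariant), Lemma \ref{l2} (to keep ``$C_4$-free odd-signable'' as an invariant), and Theorem \ref{C4fos-decomp} (to identify the leaves).
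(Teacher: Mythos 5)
Your overall route is exactly the paper's: recurse, at each node apply Theorem \ref{t:tv}, rule out outcome (iii) via Lemma \ref{l1}, keep the invariants ``$C_4$-free odd-signable'' and ``no star cutset'' via Lemmas \ref{l2} and \ref{l1}, and identify the leaves via Theorem \ref{C4fos-decomp}. That skeleton is correct and is all the paper itself says.

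The genuine problem is the termination argument you added, which is wrong as stated. The marker path has length $3$, hence $4$ vertices (not $2$), so the block $G_1$ has $|X_1|+4$ vertices while $G'$ has $|X_1|+|X_2|$; your monovariant $|V(G_i)|<|V(G')|$ requires $|X_2|\geq 5$, and the definition of a 2-join only forces $|X_2|\geq 3$ (e.g.\ $G[X_2]$ a triangle with $A_2,B_2$ singletons satisfies every clause, since the required $A_2$--$B_2$ path may be a single edge). So the claim that the strict decrease ``follows from the definition of a 2-join'' is false: a block can have at least as many vertices as its parent. Excluding such small sides genuinely uses the no-star-cutset hypothesis and some case analysis (in the triangle example, $\{a,b\}\cup A_1\subseteq N[a]$ is a star cutset isolating the third vertex of $X_2$), and the $|X_2|=4$ cases need separate treatment; alternatively one can replace your measure by the number of vertices not internal to a path of $\mathcal{M}'$, which does strictly decrease because interior vertices of flat paths have degree $2$ (so $A_2\cup B_2$ meets paths of $\mathcal{M}_2$ only in their ends) and a single path of $\mathcal{M}_2$ covering all of $X_2$ would make $G[X_2]$ a path, contradicting the 2-join definition. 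The paper sidesteps all of this by leaning on the cited Trotignon--Vu\v{s}kovi\'c machinery behind Theorem \ref{t:tv}, whose decomposition-tree construction carries its own termination guarantee. A minor additional slip: the reason the ends of the marker path have no common neighbour outside it in the block is simply that $A_1$ and $B_1$ are disjoint (a common neighbour would lie in $A_1\cap B_1$), not anything about $A_i$ being anticomplete to $B_i$ or the size of $X_{3-i}$.
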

\begin{proof}
If $G$ is a $C_4$-free odd-signable graph that has no star cutset then, by Lemmas \ref{l1} and \ref{l2}, blocks of decomposition of $G$ with respect to every 2-join
are $C_4$-free odd-signable and have no star cutset. So by repeated application of Theorem \ref{t:tv} there is a 2-join decomposition tree for $G$
in which the leaves correspond to $C_4$-free odd-signable graphs that have no star cutset and no 2-join, and hence by Theorem \ref{C4fos-decomp}
are graphs from ${\cal B}^*$, i.e. the result holds.
\end{proof}

The {\em rankwidth} of a graph $G$, denoted by $\rw(G)$, is a property of $G$ similar to treewidth. The definition of rankwidth can be found in  {\cite{rankwidth} (where it was first defined)}. The following theorem bounds the rankwidth of graphs that have a 2-join decomposition tree with respect to ${\cal B}^*$.

\begin{theorem}\label{t:le}
{\em (\cite{Le2017DetectingClasses, Le2018ColoringCutset})}
If ${\cal D}$ is a class of graphs such that every $G\in {\cal D}$ has a 2-join decomposition tree with respect to ${\cal B}^*$,
then $rw(G)\leq 3$.
\end{theorem}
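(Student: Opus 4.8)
The plan is to prove, by induction on the 2-join decomposition tree $T_G$, that every node $(H,{\cal M})$ of $T_G$ satisfies $\rw(H)\le 3$; since the root is $(G,\emptyset)$, this gives $\rw(G)\le 3$. Two things are needed: a base case for the leaves of $T_G$ (whose first coordinates lie in ${\cal B}^*$), and an inductive step showing that the 2-join composition producing a non-leaf node from its two children does not push the rankwidth above $3$.

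For the base case one checks the four families in ${\cal B}^*$ directly. A clique has rankwidth $1$; a hole has rankwidth at most $2$; a long pyramid, being three internally disjoint paths from an apex to a triangle, admits a width-$\le 3$ rank decomposition built along a caterpillar. The only substantial case is an extended nontrivial basic graph $R=L\cup\{x,y\}$: here $L$ is the line graph of a tree $T$, hence a ``tree of cliques'' (each vertex $v$ of $T$ of degree $d$ gives a clique $K_d$ on the incident edges, adjacent cliques sharing exactly one vertex, and each vertex of $L$ lying in at most two cliques), and from the tree $T$ one reads off a rank decomposition of $L$ of width at most $2$; attaching the two extra vertices $x,y$ and tracking their (leaf-vertex) neighborhoods, one obtains a rank decomposition of $R$ of width at most $3$. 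Thus every graph in ${\cal B}^*$ has $\rw\le 3$.

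For the inductive step, let $(G',{\cal M}')$ have 2-join $(X_1,X_2)$ with split $(X_1,X_2,A_1,B_1,A_2,B_2)$, marker paths $P_1,P_2$, and children $(G_1,{\cal M}_1\cup\{P_2\})$, $(G_2,{\cal M}_2\cup\{P_1\})$ with $\rw(G_1),\rw(G_2)\le 3$ by induction. The crucial point is that a 2-join is a \emph{rank-$2$ cut}: the bipartite adjacency between $X_1$ and $X_2$ has $GF(2)$-rank at most $2$ (a vertex of $X_1$ sees all of $A_2$, or all of $B_2$, or neither), and, inside $G_1$, the marker path $P_2$ carries exactly the same interface — for any $W\subseteq X_1$ the column space over $GF(2)$ of the $W\times X_2$ adjacency equals that of the $W\times V(P_2)$ adjacency, both being spanned by the indicator vectors of $W\cap A_1$ and $W\cap B_1$. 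Consequently, given a width-$\le 3$ rank decomposition of $G_1$ in which $V(P_2)$ hangs off the tree as a pendant subtree (so that some tree edge induces the cut $(V(P_2),X_1)$), and likewise for $G_2$ with $V(P_1)$ pendant, one deletes these two pendant subtrees and joins the two dangling edges: every cut inherited from the $G_1$-side that separates some $W\subseteq X_1$ from the rest has, by the column-space identity, the same rank in $G'$ as in $G_1$ (hence $\le 3$), symmetrically on the $G_2$-side, and the single new cut is $(X_1,X_2)$, of rank $\le 2$. Hence $\rw(G')\le 3$.

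The hard part is justifying the hypothesis used above — that the width-$\le 3$ rank decompositions of the children can be chosen with their marker paths pendant — and doing so robustly enough to iterate: since further 2-joins are applied below, one must strengthen the induction to ``$(H,{\cal M})$ has a width-$\le 3$ rank decomposition in which \emph{every} $P\in{\cal M}$ is a pendant subtree.'' One then checks that base graphs satisfy this vacuously (${\cal M}=\emptyset$), and that the gluing above can be arranged to keep all flat paths of ${\cal M}_1$ and of ${\cal M}_2$ pendant — using that the flat paths are pairwise vertex-disjoint with degree-$2$ interiors, so that rearranging them onto a common caterpillar spine only creates cuts of rank $\le 2$ and does not increase the rank of the other cuts. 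This flat-path bookkeeping, inherited from the combinatorial 2-join machinery (Theorem \ref{t:tv} and the constructions behind Lemma \ref{l1}), is precisely what keeps the bound from compounding with the depth of $T_G$, and carrying it through carefully is the technical core of the argument.
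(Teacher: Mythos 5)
The paper does not actually prove this statement: Theorem \ref{t:le} is imported from \cite{Le2017DetectingClasses, Le2018ColoringCutset} and used as a black box, so there is no internal proof to compare yours with; your proposal has to be judged on its own. Your overall strategy --- bottom-up induction on the 2-join decomposition tree, rankwidth at most $3$ for the members of ${\cal B}^*$, and the observation that a 2-join is a cut of $GF(2)$-rank at most $2$ whose interface is faithfully simulated by the marker path, so that two rank decompositions can be glued at the edges displaying $V(P_1)$ and $V(P_2)$ without increasing the width --- is the natural route and is in the spirit of the cited works; the column-space identity you invoke (the $Y\times X_2$ and $Y\times V(P_2)$ matrices have the same column space, spanned by the indicators of $Y\cap A_1$ and $Y\cap B_1$) is exactly the right justification for why the inherited cuts keep their cut-rank.

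However, as written there is a genuine gap, and it sits precisely where you locate the ``technical core''. First, your base case is wrong: the leaves of a 2-join decomposition tree are pairs $(G',{\cal M}')$ with $G'\in{\cal B}^*$ but ${\cal M}'$ generally \emph{nonempty} (it consists of the marker paths accumulated along that branch); only the root has ${\cal M}=\emptyset$. So the strengthened hypothesis --- a width-$\le 3$ rank decomposition in which every path of ${\cal M}'$ is displayed as a pendant subtree --- is not vacuous at the leaves and must actually be established for cliques, holes, long pyramids and extended nontrivial basic graphs equipped with an arbitrary prescribed family of vertex-disjoint flat paths. (Even your unconstrained base case is only sketched; e.g.\ for $R=L\cup\{x,y\}$ the generic ``adding a vertex raises rankwidth by at most one'' bound applied to your claimed width-$2$ decomposition of $L$ would give $4$, not $3$, so a structure-specific argument is needed there too --- one can instead use that $L$, being a block graph, has rankwidth $1$.) Second, the bookkeeping that the paths of ${\cal M}_1\cup{\cal M}_2$ remain flat in the blocks and remain displayed after gluing is asserted with a vague appeal to ``rearranging onto a caterpillar spine'' but not carried out, and you yourself flag it as the hard part. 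Until those two items are supplied, what you have is a plausible plan rather than a proof --- which is presumably why the present paper quotes the theorem from \cite{Le2017DetectingClasses, Le2018ColoringCutset} rather than reproving it.
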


\begin{corollary}\label{c:rw}
If $G$ is a $C_4$-free odd-signable graph with no star cutset then $rw(G)\leq 3$.
\end{corollary}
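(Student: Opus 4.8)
The plan is to derive Corollary~\ref{c:rw} immediately from Lemma~\ref{l:2jdt} and Theorem~\ref{t:le}. First I would let $\mathcal{D}$ be the class of all $C_4$-free odd-signable graphs that have no star cutset. By Lemma~\ref{l:2jdt}, every graph in $\mathcal{D}$ admits a 2-join decomposition tree with respect to $\mathcal{B}^*$. Hence $\mathcal{D}$ satisfies the hypothesis of Theorem~\ref{t:le}, and therefore $\rw(H)\leq 3$ for every $H\in\mathcal{D}$; in particular $\rw(G)\leq 3$, which is exactly the assertion.

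There is essentially no obstacle at this stage, since the corollary is a one-line consequence of the two cited results. The only point worth being careful about is that Theorem~\ref{t:le} is stated for a \emph{class} of graphs rather than a single graph: a 2-join decomposition tree of $G$ has internal vertices labelled by the blocks of decomposition arising along the way, and these must themselves belong to the class on which the rankwidth bound is claimed. This is precisely why one takes $\mathcal{D}$ to be the entire family of $C_4$-free odd-signable graphs with no star cutset rather than the singleton $\{G\}$; that this family indeed has a 2-join decomposition tree for each of its members is the content of Lemma~\ref{l:2jdt}. That lemma in turn rests on the closure properties proved in Lemmas~\ref{l1} and~\ref{l2} (blocks of decomposition of a 2-join inherit both ``$C_4$-free odd-signable'' and ``no star cutset''), together with the decomposition theorem~\ref{C4fos-decomp} and the iteration mechanism of Theorem~\ref{t:tv}. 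So all the real work has already been done, and the proof of the corollary amounts to invoking Lemma~\ref{l:2jdt} followed by Theorem~\ref{t:le}.
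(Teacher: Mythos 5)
Your proposal is correct and matches the paper's proof, which likewise derives the corollary directly from Lemma~\ref{l:2jdt} together with Theorem~\ref{t:le}. The extra care you take in applying Theorem~\ref{t:le} to the whole class of $C_4$-free odd-signable graphs with no star cutset (rather than a single graph) is exactly the right reading of that theorem's hypothesis.
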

\begin{proof}
Follows from Theorem \ref{t:le} and Lemma \ref{l:2jdt}.
\end{proof}

The following theorem bounds the treewidth of $G$ by a function of the rankwidth of $G$ for graphs $G$ with no subgraph isomorphic to $K_{r, r}$, where $K_{r, r}$ is a complete bipartite graph with $r$ vertices in both sides of the bipartition. 

\begin{theorem}\label{t:gw}
{\em (\cite{Gurski2000TheKnn})}
If $G$ is a graph that has no subgraph isomorphic to $K_{r,r}$, then $tw(G)+1\leq 3(r-1)(2^{rw(G)+1}-1)$.
\end{theorem}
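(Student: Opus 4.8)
The plan is to reduce the claim to a linear bound on treewidth in terms of clique-width, and then to read a tree decomposition off a clique-width expression. Write $\mathrm{cw}(G)$ for the clique-width of $G$; by the standard relation between rankwidth and clique-width (see \cite{rankwidth}) one has $\mathrm{cw}(G)\le 2^{\rw(G)+1}-1$, so it is enough to prove that every $K_{r,r}$-free graph $G$ satisfies $\tw(G)+1\le 3(r-1)\,\mathrm{cw}(G)$. Fix an optimal $k$-expression for $G$ with $k=\mathrm{cw}(G)$ and let $\mathcal{T}$ be its rooted syntax tree: leaves introduce a single labelled vertex (label in $\{1,\dots,k\}$), and internal nodes are a disjoint union $\oplus$ (binary) or a relabelling $\rho_{i\to j}$ or an edge-addition $\eta_{i,j}$ (both unary). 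For a node $t$, let $V_t\subseteq V(G)$ be the set of vertices introduced in the subtree at $t$, with labelling $\ell_t\colon V_t\to\{1,\dots,k\}$.

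First I would isolate the \emph{indistinguishability} fact: if $u,v\in V_t$ have $\ell_t(u)=\ell_t(v)$, then $u$ and $v$ have exactly the same neighbours in $V(G)\setminus V_t$. Indeed, every edge from $V_t$ to the outside is created by some $\eta_{i,j}$ at a strict ancestor $t'$ of $t$, and the operations on the path from $t$ to $t'$ never give two equally-labelled vertices different labels, so $u,v$ carry a common label at $t'$ and hence are joined to the same vertices by that edge-addition. Combined with $K_{r,r}$-freeness this yields the core dichotomy: for a node $t$ and a label $i$, let $L_t^i$ be the set of label-$i$ vertices of $V_t$ having a neighbour outside $V_t$; they share a common outside-neighbourhood $N_t^i\subseteq V(G)\setminus V_t$, and if both $|L_t^i|\ge r$ and $|N_t^i|\ge r$ then $G\supseteq K_{r,r}$. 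Hence for each $t$ and $i$, either $|L_t^i|\le r-1$ or $|N_t^i|\le r-1$. Define the \emph{frontier set}
\[
Y_t=\Bigl(\bigcup_{i:\,|L_t^i|\le r-1} L_t^i\Bigr)\ \cup\ \Bigl(\bigcup_{i:\,|L_t^i|\ge r} N_t^i\Bigr).
\]
For each label $i$ at most one of the two unions contributes, so $|Y_t|\le k(r-1)$; moreover $Y_t$ separates $V_t\setminus Y_t$ from $V(G)\setminus(V_t\cup Y_t)$, since an edge $ab$ with $a\in V_t\setminus Y_t$ and $b\notin V_t$ forces $|L_t^{\ell_t(a)}|\ge r$ (else $a\in Y_t$) and hence $b\in N_t^{\ell_t(a)}\subseteq Y_t$.

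Next I would build the tree decomposition on $\mathcal{T}$ (with the unary $\rho$- and $\eta$-nodes suppressed, so that the underlying tree is binary with leaf set $V(G)$, and $Y_t$ is unchanged across each suppressed node). Take the bag of a node $t$ to be $\beta(t)=Y_t\cup\bigcup_{c}Y_c$, where $c$ runs over the children of $t$, together with the single introduced vertex when $t$ is a leaf; then $|\beta(t)|\le 3k(r-1)\le 3(r-1)(2^{\rw(G)+1}-1)$, the required width bound. It then remains to check the three tree-decomposition axioms. Every vertex lies in the bag of its leaf. For an edge $ab$, let $t^{*}$ be the least common ancestor of $a$ and $b$ in $\mathcal{T}$, a $\oplus$-node with $a,b$ in opposite children $c_1,c_2$; then $a\in L_{c_1}^{\ell_{c_1}(a)}$ (its outside-neighbour $b$ witnesses this) and symmetrically $b\in L_{c_2}^{\ell_{c_2}(b)}$, and a short case analysis according to whether these $L$-classes are small or large places $a$ and $b$ in a common bag, after inserting $O(1)$ auxiliary degree-two nodes near $t^{*}$ if necessary. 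Finally, $v\in Y_t$ forces $t$ to lie on, or immediately above, the path of $\mathcal{T}$ along which $v$ is ``active'' (either $v\in V_t$ with $v\in L_t^{\ell_t(v)}$, or $v\notin V_t$ with $v\in N_t^i$ for the single label $i$ carried at $t$ by the still-below neighbours of $v$), which gives connectivity.

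I expect the main obstacle to be exactly this last step: arranging the bag assignment so that the vertex-, edge-, and connectivity-axioms hold \emph{simultaneously} while every bag stays a union of $O(1)$ frontier sets --- in particular the edge axiom in the ``large class'' case forces the construction to be set up with some care (this is the technical heart of Gurski and Wanke's argument \cite{Gurski2000TheKnn}). The conceptual inputs --- the reduction to clique-width, the indistinguishability fact, the $K_{r,r}$-dichotomy, and the bound $|Y_t|\le k(r-1)$ --- are each short, and the argument sketched above is in substance that of \cite{Gurski2000TheKnn}.
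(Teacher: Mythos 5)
This statement is not proved in the paper at all: it is quoted from Gurski and Wanke \cite{Gurski2000TheKnn}, in the form obtained by combining their bound $\tw(G)+1\le 3(r-1)\,\mathrm{cw}(G)$ for graphs with no $K_{r,r}$ subgraph with the inequality $\mathrm{cw}(G)\le 2^{\rw(G)+1}-1$ of \cite{rankwidth}. So the only comparison available is with the cited argument, and your sketch does follow it: the reduction to clique-width, the indistinguishability of equally labelled vertices, the $K_{r,r}$-dichotomy, the frontier sets $Y_t$ of size at most $k(r-1)$, and bags assembled from a node and its two children are exactly the ingredients of the quoted proof, and the arithmetic matches the stated inequality.

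As a self-contained proof, however, your write-up has a genuine gap precisely where you say it does: the verification of the tree-decomposition axioms is neither routine nor supplied. Concretely, the assertion that $Y_t$ is unchanged across the suppressed unary nodes is false --- a relabelling can merge two classes of size $r-1$ into one of size at least $r$, switching that label's contribution from an $L$-set to an $N$-set (retaining the unary nodes, whose bags have size at most $2k(r-1)$, repairs this). More substantially, both remaining axioms need arguments your sketch does not contain. The edge axiom can be completed by taking, on the path from $b$'s leaf to the child of the least common ancestor containing $b$, the highest node at which $b$'s class is small and observing that its parent's bag contains both ends (no auxiliary nodes are needed), but this must be said. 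The connectivity axiom is the harder point: one needs the stronger form of indistinguishability (all vertices sharing a label at $t$ have identical neighbourhoods outside $V_t$, so $L_t^i$ is either the entire label class or empty), the fact that label classes only grow towards the root, and a further application of $K_{r,r}$-freeness at the $\eta$-node that creates the edges between a vertex $v$ and a large class, in order to show that the set of bags containing $v$ is connected; without such an argument the trace of a vertex is not obviously an interval, and your one-sentence justification does not establish it. Since the paper itself uses the statement as a black box, deferring these details to \cite{Gurski2000TheKnn} is consistent with its level of rigor, but as written your argument is a faithful outline of the cited proof rather than a proof.
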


Finally, we show that the treewidth of $G$ is bounded by a function of $\delta$.

\begin{corollary}
\label{cor:no_sc_bounded_tw}
If $G$ is a $C_4$-free odd-signable graph with maximum degree $\delta$ and no star cutset then $tw(G)\leq 45\delta-1$.
\end{corollary}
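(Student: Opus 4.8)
The plan is simply to combine Corollary~\ref{c:rw} with Theorem~\ref{t:gw}, since between them they already contain all the substantive work. First I would record the elementary observation that a graph of maximum degree $\delta$ contains no subgraph isomorphic to $K_{\delta+1,\delta+1}$: in $K_{\delta+1,\delta+1}$ every vertex has degree $\delta+1$, so a copy of it sitting inside $G$ (even as a non-induced subgraph) would force some vertex of $G$ to have degree at least $\delta+1$, a contradiction. Hence Theorem~\ref{t:gw} applies to $G$ with $r=\delta+1$, and it gives
\[
tw(G)+1 \le 3((\delta+1)-1)(2^{rw(G)+1}-1) = 3\delta(2^{rw(G)+1}-1).
\]

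Next I would feed in the rankwidth bound. Since $G$ is a $C_4$-free odd-signable graph with no star cutset, Corollary~\ref{c:rw} yields $rw(G)\le 3$. The right-hand side of the displayed inequality is non-decreasing in $rw(G)$ (the factor $3\delta$ is nonnegative and $2^{rw(G)+1}-1$ increases with $rw(G)$), so substituting $rw(G)\le 3$ gives $tw(G)+1 \le 3\delta(2^{4}-1) = 45\delta$, that is, $tw(G)\le 45\delta-1$, which is the claim.

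I do not anticipate any genuine obstacle: Corollary~\ref{c:rw} — which itself rests on Lemma~\ref{l:2jdt} and the $2$-join decomposition-tree machinery of this section — together with the quoted Theorem~\ref{t:gw} does all the heavy lifting, and what is left is only the choice $r=\delta+1$ and a one-line computation. The one point worth a brief remark is the degenerate case $\delta=0$: then $G$ is edgeless, and since the empty set is a star cutset of any disconnected graph, the hypothesis forces $G$ to be connected and hence a single vertex, so the bound is of interest only for $\delta\ge1$; for those $\delta$ the argument above is complete.
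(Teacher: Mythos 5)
Your proposal is correct and matches the paper's argument exactly: the paper proves this corollary by combining Corollary~\ref{c:rw} with Theorem~\ref{t:gw}, and your choice $r=\delta+1$ with the computation $3\delta(2^{3+1}-1)=45\delta$ is precisely the intended (implicit) calculation.
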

\begin{proof}
Follows from Corollary \ref{c:rw} and Theorem \ref{t:gw}.
\end{proof}

\section{Balanced separators in $C_4$-free odd-signable graphs}
\label{sec:final}

Let $\delta$ be a positive integer and let $G$ be a $C_4$-free odd-signable graph with maximum degree $\delta$. In this section, we prove Theorem \ref{thm:mainthm}, showing that $G$ has a balanced separator. We begin by stating a helpful lemma showing that if $G$ has bounded treewidth, then $G$ has a balanced separator. 

\begin{lemma}[\cite{Cygan2015}, Lemma 7.19]
\label{lemma:tw_sep}
Let $G$ be a graph with treewidth at most $k$ and let $w:V(G) \to [0, 1]$ be a weight function of $G$ with $w(G) = 1$. Then, $G$ has a $(w, \frac{1}{2}, k+1)$-balanced separator. 
\end{lemma}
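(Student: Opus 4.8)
The plan is to prove the slightly reformulated statement that $G$ has a set $Y$ with $|Y|\le k+1$ such that every component of $G\setminus Y$ has $w$-weight at most $\tfrac12$. This suffices: any set $Y$ of size at most $k+1$ is $(k+1)$-bounded, since taking $v_1,\dots,v_{|Y|}$ to be the elements of $Y$ themselves gives $Y\subseteq\bigcup_{v\in Y}N^{k+1}[v]$; hence such a $Y$ is a $(w,\tfrac12,k+1)$-balanced separator. First I would fix a tree decomposition $(T,\chi)$ of $G$ of width at most $k$, so that $|\chi(t)|\le k+1$ for every $t\in V(T)$, and then produce $Y$ as one of the bags.

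Next I would orient the edges of $T$. For an edge $e=t_1t_2$, recall from Section \ref{sec:laminar} the associated separation $S_e=(D^{t_1}_e,C_e,D^{t_2}_e)$, where $C_e=\chi(t_1)\cap\chi(t_2)$, $T_i$ is the component of $T\setminus e$ containing $t_i$, and $D^{t_i}_e=\bigl(\bigcup_{t\in T_i}\chi(t)\bigr)\setminus C_e$; these three sets partition $V(G)$, so $w(D^{t_1}_e)+w(D^{t_2}_e)\le w(G)=1$. I orient $e$ toward $t_1$ if $w(D^{t_1}_e)\ge w(D^{t_2}_e)$ and toward $t_2$ otherwise; in either case $e$ points toward the endpoint whose ``exclusive side'' is the heavier one, and the opposite side then has weight at most $\tfrac12$. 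Since the out-degrees of the vertices of $T$ sum to $|E(T)|=|V(T)|-1<|V(T)|$, some vertex $t^{*}$ has out-degree $0$, i.e. $t^{*}$ is a sink.

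I then set $Y=\chi(t^{*})$, so $|Y|\le k+1$. Let $Z$ be a component of $G\setminus Y$. Using condition (iii) of a tree decomposition together with the connectivity of $Z$, all bags meeting $Z$ lie in a single component of $T\setminus t^{*}$, say the one containing the neighbor $t'$ of $t^{*}$; consequently $Z\subseteq\bigl(\bigcup_{t\in T'}\chi(t)\bigr)\setminus\chi(t^{*})=D^{t'}_e$ for $e=t^{*}t'$. Because $t^{*}$ is a sink, $e$ is oriented toward $t^{*}$, so by the orientation rule $w(D^{t'}_e)\le\tfrac12$, and therefore $w(Z)\le\tfrac12$. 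Since $Z$ was an arbitrary component, $Y$ is the required separator.

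The one step that needs real care is the claim that each component $Z$ of $G\setminus Y$ is confined to a single branch of $T\setminus t^{*}$. This is the classical fact that deleting a bag of a tree decomposition leaves each remaining connected piece inside one subtree: if two vertices of $Z$ had their bag-subtrees in different components of $T\setminus t^{*}$, then no single bag could contain both, so in particular no bag could contain an edge joining them, which is impossible along a path of $Z$; combined with (iii) this pins all of $Z$ into one component of $T\setminus t^{*}$. Everything else is routine bookkeeping with the partition $V(G)=D^{t_1}_e\cup C_e\cup D^{t_2}_e$ and the pigeonhole argument for the sink.
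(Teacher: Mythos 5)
Your proof is correct. The paper does not prove Lemma \ref{lemma:tw_sep} at all---it is imported from \cite{Cygan2015}---and your argument (orient each edge of the tree decomposition toward the endpoint whose exclusive side is heavier, take the bag of a sink node, and use the standard fact that each component of $G\setminus\chi(t^{*})$ meets bags of only one branch of $T\setminus t^{*}$, hence lies in some $D^{t'}_e$ of weight at most $\tfrac12$) is exactly the classical textbook proof of that result, so there is nothing to add.
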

Now, we prove that if $G$ is a clean $C_4$-free odd-signable graph with maximum degree $\delta$, then $G$ has a balanced separator.

\begin{theorem}
Let $\delta, d$ be positive integers, let $c \in [\frac{1}{2}, 1)$, let $m \in [0, 1]$, and let $f(2, \delta) = 2(\delta + 1)^2 + 1$, with $d \geq 47\delta + {  2}f(2, \delta)\delta -2$, and $(1-c) + [m + 2f(2, \delta)\delta 2^{\delta}(1-c) + (\delta - 1)2^{\delta}(1-c)](\delta + \delta^2) < \frac{1}{2}$. Let $G$ be a connected clean $C_4$-free odd-signable graph with maximum degree $\delta$ and  {let $w:V(G) \to [0, 1]$ be a weight function on $G$ with $w(G) = 1$ and $w^{\max} \leq m$}. Then, $G$ has a $(w, c, d)$-balanced separator.
\label{thm:clean_balanced_sep}
\end{theorem}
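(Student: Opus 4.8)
The plan is to argue by contradiction. Suppose $G$ has no $(w,c,d)$-balanced separator. First I would check that the hypotheses of Theorems~\ref{thm:bigthm} and~\ref{thm:no_twin_wheel} hold: indeed $d \geq 47\delta + 2f(2,\delta)\delta - 2 > 2f(2,\delta)\delta + 2\delta$, and since $2f(2,\delta)\delta 2^{\delta}(1-c) + (\delta-1)2^{\delta}(1-c) \geq f(2,\delta)\delta 2^{\delta}(1-c)$, the assumed inequality on $c$ implies the weaker inequality $(1-c) + [m + f(2,\delta)\delta 2^{\delta}(1-c)](\delta + \delta^2) < \frac{1}{2}$ required there. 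Let $\mathcal{T}$ be the set of all twin wheel forcers of $G$ and let $(\beta_1, w_1), \ldots, (\beta_{2k+1}, w_{2k+1})$ be a $\mathcal{T}$-decomposition of $G$, which exists by Theorem~\ref{thm:bigthm} since $G$ has no $(w,c,d)$-balanced separator. Writing $\beta = \beta_{2k+1}$ and $d_{2k+1} = d - 2k\delta - 2(\delta-1)$, Theorem~\ref{thm:bigthm} gives that $\beta$ is a connected induced subgraph of $G$ with $w_{2k+1}(\beta) = 1$, that $\beta$ has no clique cutset, and that $\beta$ has no $(w_{2k+1}, c, d_{2k+1})$-balanced separator; and Theorem~\ref{thm:no_twin_wheel} gives that $\beta$ contains no terminal twin wheel. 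Since $k \leq f(2,\delta)$, we obtain $d_{2k+1} \geq d - 2f(2,\delta)\delta - 2(\delta-1) \geq 45\delta$.

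Next I would show that $\beta$ has no forcer. As an induced subgraph of the clean graph $G$, the graph $\beta$ is clean (a strong forcer in $\beta$ would be a strong forcer in $G$), so $\beta$ has no proper wheel forcer and no short pyramid forcer; and since $\beta$ has no terminal twin wheel, it has no twin wheel forcer. Hence $\beta$ has no forcer at all. Moreover $\beta$ is $C_4$-free and odd-signable (both hereditary, the latter by Theorem~\ref{os}), hence theta-free; it has no $(w_{2k+1}, c, 1)$-balanced separator, because a $1$-bounded balanced separator is in particular $d_{2k+1}$-bounded and $d_{2k+1} \geq 1$; and it has no clique cutset. Lemma~\ref{lemma:no_sc}, applied to $\beta$ with the weight function $w_{2k+1}$, then yields that $\beta$ has no star cutset.

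Finally, Corollary~\ref{cor:no_sc_bounded_tw} gives $\tw(\beta) \leq 45\delta - 1$, so by Lemma~\ref{lemma:tw_sep} the graph $\beta$ has a $(w_{2k+1}, \frac{1}{2}, 45\delta)$-balanced separator $Y$. Since $c \geq \frac{1}{2}$, the set $Y$ is a $(w_{2k+1}, c, 45\delta)$-balanced separator, and since $45\delta \leq d_{2k+1}$, the $45\delta$-bounded set $Y$ is also $d_{2k+1}$-bounded; hence $Y$ is a $(w_{2k+1}, c, d_{2k+1})$-balanced separator of $\beta$, contradicting the conclusion reached in the first paragraph. This contradiction proves the theorem.

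I expect essentially no substantive obstacle here: the hard work has been done in Sections~\ref{sec:clique}--\ref{sec:no_sc}, and the argument simply chains Theorems~\ref{thm:bigthm} and~\ref{thm:no_twin_wheel}, Lemma~\ref{lemma:no_sc}, Corollary~\ref{cor:no_sc_bounded_tw}, and Lemma~\ref{lemma:tw_sep}. The only points requiring care are bookkeeping: tracking that ``no $(w,c,d')$-balanced separator'' is a weaker statement as $d'$ grows (so it suffices to contradict it for $d' = d_{2k+1}$, while $d' = 1$ is the convenient form to feed into Lemma~\ref{lemma:no_sc}), and verifying that the constant $47$ in the bound on $d$ is exactly what is needed so that, after the decomposition loses $2k\delta + 2(\delta-1) \leq 2f(2,\delta)\delta + 2\delta - 2$, one still has $d_{2k+1} \geq 45\delta$, matching the treewidth bound $45\delta - 1$ of Corollary~\ref{cor:no_sc_bounded_tw}.
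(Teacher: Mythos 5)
Your proposal is correct and follows essentially the same route as the paper: take the terminal bag of a $\mathcal{T}$-decomposition via Theorem~\ref{thm:bigthm}, exclude terminal twin wheels via Theorem~\ref{thm:no_twin_wheel}, rule out star cutsets via Lemma~\ref{lemma:no_sc}, and then contradict the absence of a balanced separator using Corollary~\ref{cor:no_sc_bounded_tw} and Lemma~\ref{lemma:tw_sep}, with the same arithmetic giving $d_{2k+1}\geq 45\delta$. The only difference is that you spell out some hypothesis checks (hereditarity of cleanness, the reduction to the $(w_{2k+1},c,1)$ case) that the paper leaves implicit, which is fine.
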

\begin{proof}
Suppose that $G$ does not have a $(w, c, d)$-balanced separator. Let $\mathcal{T}$ be the set of all twin wheel forcers in $G$ and let $\beta_{2k+1}$ be the terminal bag of a $\mathcal{T}$-decomposition of $G$, with $k \leq f(2, \delta)$. It follows from Theorem \ref{thm:bigthm} that $\beta_{2k+1}$ does not have a clique cutset or a $(w', c, d - 2k\delta - 2(\delta - 1))$-balanced separator for some weight function $w'$ with 
{  $w'(\beta_{2k+1})=1$ and}
$w'^{\max} \leq w^{\max} + f(2, \delta)\delta 2^{\delta}(1-c) + (\delta - 1)2^{\delta}(1-c)$. By Theorem \ref{thm:no_twin_wheel}, $\beta_{2k+1}$ does not contain a terminal twin wheel.

By Lemma \ref{lemma:no_sc}, $\beta_{2k+1}$ has no star cutset. Since $\beta_{2k+1}$ has no star cutset, it follows from Corollary \ref{cor:no_sc_bounded_tw} that $\tw(\beta_{2k+1}) \leq 45\delta - 1$. By Lemma \ref{lemma:tw_sep}, $\beta_{2k+1}$ has a $(w', \frac{1}{2}, 45\delta)$-balanced separator. Since $d - 2k\delta - 2(\delta - 1) \geq d - 2f(2, \delta)\delta - 2(\delta - 1) \geq 45\delta$ and $c \geq \frac{1}{2}$, it follows that $\beta_{2k+1}$ has a $(w', c, d - 2k\delta - 2(\delta - 1))$-balanced separator, a contradiction. 
\end{proof}

Finally, we prove Theorem \ref{thm:mainthm}. 
\mainthm*
\begin{proof}
Suppose that $G$ does not have a $(w, c, d)$-balanced separator. Let $\mathcal{F}$ be the set of all strong forcers of $G$ and let $\beta_{2k+1}$ be the terminal bag for an $\mathcal{F}$-decomposition of $G$, with $k \leq f(2, \delta)$. By Theorem \ref{thm:bigthm}, $\beta_{2k+1}$ {  is connected and} does not have a $(w', c, d - 2k\delta - 2(\delta - 1))$-balanced separator for some weight function $w'$ with {  $w'(\beta_{2k+1})=1$ and} $w'^{\max} \leq w^{\max} + f(2, \delta)\delta 2^{\delta}(1-c) + (\delta - 1)2^{\delta}(1-c)$, and by Theorem \ref{thm:strong_forcers_clean}, $\beta_{2k+1}$ is {  connected and} clean. Since $\beta_{2k+1}$ is clean, it follows from Theorem \ref{thm:clean_balanced_sep} that $\beta_{2k+1}$ has a $(w', c, d-2k\delta -2 (\delta - 1))$-balanced separator, a contradiction. \end{proof}

\section*{Acknowledgment}
We thank Cemil Dibek for reading and providing feedback on this paper.

\bibliographystyle{acm}

\bibliography{references}

\end{document}